\newcommand{\mat}[4]{ \left ( \begin{array}{cc} #1 & #2 \\ #3 &
      #4 \end{array} \right)} 
\renewcommand{\vec}[2]{ \left ( \begin{array}{c} #1 \\ #2\end{array} \right)}
\newcommand{\fp}{{\rm fp}}
\newcommand{\rest}{\upharpoonright}
\newcommand{\Hom}{\operatorname{Hom}}
\newcommand{\End}{\operatorname{End}}
\newcommand{\Ext}{\operatorname{Ext}}
\newcommand{\Cogen}{\operatorname{Cogen}}
\newcommand{\Gen}{\operatorname{Gen}}
\newcommand{\Ker}{\operatorname{Ker}}
\newcommand{\tr}{\operatorname{tr}}
\newcommand{\rej}{\operatorname{rej}}
\newcommand{\Img}{\operatorname{Im}}
\newcommand{\Coim}{\operatorname{Coim}}
\newcommand{\Coker}{\operatorname{Coker}}
\DeclareMathOperator{\Soc}{Soc}
\DeclareMathOperator{\op}{op}
\DeclareMathOperator{\Ab}{Ab}
\newcommand{\Modr}[1]{\mathrm{Mod}\textrm{-}{#1}}
\newcommand{\Flatl}[1]{{#1}\textrm{-}\mathrm{Flat}}
\newcommand{\Modl}[1]{{#1}\textrm{-}\mathrm{Mod}}
\newcommand{\Prod}{\mathrm{Prod}}
\theoremstyle{plain}
\newtheorem{theoremsec}{Theorem}
\newtheorem{theorem}{Theorem}[section]
\newtheorem{lemma}[theorem]{Lemma}
\newtheorem{proposition}[theorem]{Proposition}
\newtheorem{corollary}[theorem]{Corollary}
\theoremstyle{definition}
\newtheorem{definition}[theorem]{Definition}
\theoremstyle{remark}
\newtheorem{remark}[theorem]{Remark}
\newtheorem{examples}[theorem]{Examples}
\theoremstyle{definition}
\theoremstyle{plain}
\title{Flatness in finitely accessible additive categories}
\author{Manuel Cort\'es-Izurdiaga}
\address{Department of Mathematics, University of Almeria, E-04071, Almeria, Spain}
\email{mizurdia@ual.es}
\begin{document}

\begin{abstract}
Motivated by some problems proposed by Cuadra and Simson related to flat objects in finitely accessible Grothendieck categories, we study flatness in the more general setting of finitely accessible additive categories. For such category $\mathcal A$, we characterize when $\mathcal A$ is preabelian and abelian. We prove that if the class of flat objects in $\mathcal A$ is closed under pure subobjects, then every flat object is a direct union of \textit{small} flat subobjects. Finally, we characterize when $\mathcal A$ has enough flat and projective objects and we prove that, in this case, the class of flat objects is closed under pure subobjects.
\end{abstract}

\maketitle

\section{Introduction}

Flatness in algebra is a crucial property of modules and morphisms, playing a vital role in commutative  and non commutative algebra, and in algebraic geometry. Maybe, the notion of flatness is not the most intuitive one, but it is often more useful than other more intuitive notions, such us projectivity or freeness. As Mumford remarked in the beginning of Section III.10 of \cite{Mumford}:

\begin{quote}
	The concept of flatness is a riddle that comes out of algebra, but which
	technically is the answer to many prayers.
\end{quote}

From the point of view of relative homological algebra, flat modules can be used to compute resolutions and to define homological notions, such as the flat dimension of a module and the weak global dimension of the ring. The point here is, of course, that there are enough flat modules in module categories, i. e., every module is a quotient of a flat module. Sometimes, using flat modules instead of projectives have led to the solution of a problem. For instance, in order to prove that the global dimension of a ring is symmetric for two-sided noetherian rings, Auslander proved that the weak dimension is always symmetric and that, for such rings, the global weak and the left and right global dimensions coincide \cite[Corollary 8.28]{Rotman}.

One question arises naturally: can we apply the same ideas in an (abelian) category which does not have enough projectives but that has enough flats? The main points here are to find for which categories the notion of flat object makes sense, and if we are able to characterize those categories having enough flats.  This is the main idea behind the paper by Cuadra and Simson \cite{CuadraSimson}. Actually, they propose the following two questions relative to flat objects \cite[Open problems 2.9]{CuadraSimson}:

\begin{enumerate}
	\item Give a characterization of finitely accessible abelian categories having enough flat objects.
	
	\item If a finitely accessible abelian category has enough flat objects, does it have enough projective objects?
\end{enumerate}

In this paper, we give a characterization of those finitely accessible additive categories (with some extra condition, see Theorem \ref{t:FAACEnoughFlats}) having enough flat and projective objects, as well as some particular situations in which problem (2) has an affirmative answer, see Section \ref{s:Ring}.

The most categorical characterization of flatness relies on purity: A module $M$ is flat if every epimorphism ending in $M$ is pure. The natural setting to work with purity are the finitely accessible categories, that is, those which have direct limits, a set of isomorphism classes of finitely presented objects and such that every object in the category is a direct limit of finitely presented objects. Purity and flat objects in finitely accessible abelian categories were first studied by Stenström \cite{Stenstrom68}.

The main objective in this paper is to study flat objects in finitely accessible additive categories. The main tool is Crawley-Boevey's representation theorem \cite[Theorem of 1.4]{Crawley}, which states that every finitely accessible additive category $\mathcal A$ with class of finitely presented objects being $\fp(\mathcal A)$ is equivalent to the full subcategory of flat functors in the category $(\fp(\mathcal S)^{\textrm{op}}, \textrm{Ab})$ of contravariant additive functors from $\fp(\mathcal A)$ to the category $\Ab$ of abelian groups. Then, we fix a small preadditive category $\mathcal S$ and we study the full subcategory $\Flatl{\mathcal S}$ of flat functors in the category of all additive functors from $\mathcal S$ to the category of abelian groups.

In order to study the categorical properties of $\Flatl{\mathcal S}$, the torsion theory $\tau=(\mathcal T_\tau,\mathcal F_\tau)$ cogenerated by the class $\Flatl{\mathcal S}$ plays a crucial role. For this reason, we extend, in Section \ref{s:TorsionTheories}, many results known for torsion theories in module categories to our setting of functor categories.

Although our results are stated for the category of flat functors in a functor category, they have a direct translation into the setting of finitely accessible additive categories as a direct application of the aforementioned representation theorem. For instance, Theorem \ref{t:Preabelian} reads:

\begin{theoremsec}\label{t:FAACAbelian}
	Let $\mathcal A$ be a finitely accessible additive category. Then, the following are equivalent:
	\begin{enumerate}
		\item $\mathcal A$ is preabelian.
		
		\item $\mathcal A$ has cokernels.
		
		\item $\mathcal A$ is a reflective subcategory of $(\fp(\mathcal A)^{\op},\Ab)$.
		
		\item $\mathcal A$ is preenveloping and closed under cokernels in $(\fp(\mathcal A)^{\op},\Ab)$.
		
		\item $(\fp(\mathcal A),\Ab)$ is locally coherent and $(\fp(\mathcal A)^{\op},\Ab)$ has weak dimension less than or equal to $2$.
	\end{enumerate}
	Moreover, $\mathcal A$ is abelian if and only if $(\fp(\mathcal A),\Ab)$ is locally coherent and for every object $M$ in $\mathcal A$ and subobject $K$ of $M$ in $(\fp(\mathcal A)^{\op},\Ab)$, the inclusion $K \hookrightarrow \rej_{\mathcal A}^K(M)$ is an $\mathcal A$-reflection of $K$.
\end{theoremsec}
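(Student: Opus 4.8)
The plan is to obtain Theorem~\ref{t:FAACAbelian} from its functor-category version, Theorem~\ref{t:Preabelian}, by transporting the latter along Crawley-Boevey's representation theorem; recall that Theorem~\ref{t:Preabelian} is the evident counterpart of Theorem~\ref{t:FAACAbelian} in which $\mathcal A$ is replaced by $\Flatl{\mathcal S}$, the inclusion $\mathcal A\hookrightarrow(\fp(\mathcal A)^{\op},\Ab)$ by $\Flatl{\mathcal S}\hookrightarrow(\mathcal S,\Ab)$, and $(\fp(\mathcal A),\Ab)$ by $(\mathcal S^{\op},\Ab)$. Put $\mathcal S=\fp(\mathcal A)^{\op}$, a small preadditive category, and let $y\colon\mathcal A\to(\fp(\mathcal A)^{\op},\Ab)$ be the restricted Yoneda functor $M\mapsto\mathcal A(-,M)\rest_{\fp(\mathcal A)}$. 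By Crawley-Boevey's theorem $y$ is fully faithful, carries pure-exact sequences to exact sequences, sends $\fp(\mathcal A)$ to the finitely generated projective functors, and restricts to an equivalence of $\mathcal A$ with the full subcategory of flat functors in $(\fp(\mathcal A)^{\op},\Ab)=(\mathcal S,\Ab)$, that is, with $\Flatl{\mathcal S}$. Using $y$ I identify $\mathcal A$ with $\Flatl{\mathcal S}$, compatibly with the inclusions into the respective functor categories, and $(\fp(\mathcal A),\Ab)$ with $(\mathcal S^{\op},\Ab)$.

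Granting this identification, the statement is essentially a transcription. Conditions (1)--(4) --- $\mathcal A$ preabelian, $\mathcal A$ has cokernels, $\mathcal A$ reflective in the ambient functor category, and $\mathcal A$ preenveloping and closed under cokernels there --- are each preserved and reflected by an equivalence compatible with the two ambient inclusions, hence each holds for $\mathcal A$ precisely when the corresponding clause of Theorem~\ref{t:Preabelian} holds for $\Flatl{\mathcal S}$. Condition (5), after the identification, is literally the assertion that $(\mathcal S^{\op},\Ab)$ is locally coherent and $(\mathcal S,\Ab)$ has weak dimension at most $2$, which is exactly the last clause of Theorem~\ref{t:Preabelian}. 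So the equivalence of (1)--(5) is inherited from that theorem.

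For the ``Moreover'' part one must check that the operators in the displayed condition transport correctly. Let $M\in\mathcal A$ and let $K$ be a subobject of $M$ in $(\fp(\mathcal A)^{\op},\Ab)$, not assumed to arise from $\mathcal A$; under the identification $K$ becomes a submodule of the flat functor $y(M)$ in $(\mathcal S,\Ab)$. Both $\rej_{\mathcal A}^{K}(M)$ and the notion of an $\mathcal A$-reflection of $K$ are defined intrinsically from $K$, $M$ and the class $\mathcal A$ inside the ambient abelian category, so $y$ carries $\rej_{\mathcal A}^{K}(M)$ to $\rej_{\Flatl{\mathcal S}}^{y(K)}(y(M))$ and $\mathcal A$-reflections to $\Flatl{\mathcal S}$-reflections. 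Hence the displayed condition on $\mathcal A$ is equivalent to the analogous condition for $\Flatl{\mathcal S}$, and the characterization of abelianness again reads off Theorem~\ref{t:Preabelian}.

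The only genuine work lies in the first paragraph: checking that Crawley-Boevey's equivalence is compatible with the embeddings into the functor categories in the strong sense used above --- that, up to natural isomorphism, it intertwines the restricted Yoneda embedding of $\mathcal A$ with the tautological inclusion of $\Flatl{\mathcal S}$, matches $\fp(\mathcal A)$ with the finitely generated projective functors, and respects the ambient abelian structure (subobjects, cokernels, rejects, reflections). Once this dictionary is in place, every clause of Theorem~\ref{t:FAACAbelian} is the image of a clause of Theorem~\ref{t:Preabelian}, and there is no further obstacle.
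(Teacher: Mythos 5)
Your approach is the paper's own: Theorem~\ref{t:FAACAbelian} is presented there purely as a translation of the functor-category results along Crawley-Boevey's representation theorem, with $\mathcal S=\fp(\mathcal A)^{\op}$ and $\mathcal A$ identified with $\Flatl{\mathcal S}$ via the restricted Yoneda functor, exactly as you set it up; the paper gives no further argument, so your first and last paragraphs supply the verification it leaves implicit. Two points, however. First, the ``Moreover'' clause is not a clause of Theorem~\ref{t:Preabelian}, which only characterizes when $\Flatl{\mathcal S}$ is \emph{preabelian}; the characterization of abelianness you need to transport is Theorem~\ref{t:Panoramic}, so your final sentence cites the wrong result (the transport itself is fine, since $\rej^K_{\mathcal A}(M)$ and $\mathcal A$-reflections are defined intrinsically from the class $\mathcal A$ inside the ambient abelian category, as you observe). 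Second, your blanket claim that clauses (1)--(4) each correspond to a clause of Theorem~\ref{t:Preabelian} silently passes over a real mismatch: clause (4) of the statement says ``closed under cokernels'', while the matching clause (3) of Theorem~\ref{t:Preabelian} says ``closed under kernels''. These are far from interchangeable --- the paper itself notes after Theorem~\ref{t:Preabelian} that $\Flatl{\mathcal S}$ is closed under cokernels in $\Modl{\mathcal S}$ only when $\Flatl{\mathcal S}=\Modl{\mathcal S}$ --- so the printed clause (4) is presumably a typo for ``kernels'', and a proof that asserts the clause-by-clause correspondence without comment is not literally correct there. Neither point undermines the strategy, but both should be fixed.
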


In this theorem we see the two types of results we usually obtain: Assertions (1) and (2) state an internal characterization of $\mathcal A$ (the remarkable fact that if $\mathcal A$ has cokernels then it has kernels). Assertions (3), (4) and (5) are related to how $\mathcal A$ sits inside the functor category $(\fp(\mathcal A)^{\op},\Ab)$.

Related to flat objects we can prove the following, which, as a consequence of Corollary \ref{c:EnoughFlatsImpliesPureSubmodules}, is an extension of \cite[Theorem 3]{Rump} (here, $\tau$ is the torsion theory in $(\fp(\mathcal A)^{\op}, \Ab)$ cogenerated by - the subcategory equivalent to - $\mathcal A$):

\begin{theoremsec}\label{t:FAACFlats}
	Let $\mathcal A$ be a finitely accessible additive category such that the torsion theory $\tau$ is hereditary and $\mathcal A$ is closed under pure subobjects in $(\fp(\mathcal A)^{\op},\Ab)$. Then there is a cardinal $\kappa$ such that:
	\begin{enumerate}
		\item Every flat object in $\mathcal A$ is the direct union of $<\kappa$-presented (in $\Flatl{\mathcal S})$ flat objects in $\mathcal A$.
		
		\item Every flat object in $\mathcal A$ is filtered in the pure-exact structure by $<\kappa$-presented (in $\Modl{\mathcal S}$) flat objects.
	\end{enumerate}
\end{theoremsec}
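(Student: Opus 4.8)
The plan is to work inside the functor category $\Modl{\mathcal S}$ (equivalently $(\fp(\mathcal A)^{\op},\Ab)$ via Crawley-Boevey's representation theorem) and to exploit the fact that, because $\mathcal A \simeq \Flatl{\mathcal S}$ is closed under pure subobjects and the cogenerated torsion theory $\tau$ is hereditary, the class $\Flatl{\mathcal S}$ of flat functors is simultaneously a torsion-free class and closed under pure submodules in a locally finitely presented Grothendieck category. First I would fix a regular cardinal $\kappa$ large enough that: (i) $\card{\mathcal S}$, hence the number of isomorphism classes of $<\kappa$-presented objects of $\Modl{\mathcal S}$, is controlled; (ii) the torsion radical $t_\tau$ commutes with $<\kappa$-directed colimits, which follows from $\tau$ being hereditary (so its torsion class is closed under subobjects, quotients, extensions and coproducts, and the associated Gabriel topology has a basis of $<\kappa$-generated left ideals); and (iii) $\kappa$ is a ``Löwenheim–Skolem'' cardinal for the pure-exact structure on $\Modl{\mathcal S}$ in the sense that every module is $<\kappa$-pure-filtered by $<\kappa$-presented pure submodules (this is the standard Hill-lemma / pure-Mittag-Leffler machinery for finitely accessible categories).

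For part (2), let $F$ be a flat functor, i.e. an object of $\Flatl{\mathcal S}$. By the Löwenheim–Skolem property for purity, $F$ carries a continuous chain $0 = F_0 \subseteq F_1 \subseteq \cdots \subseteq F_\mu = F$ of pure submodules with all successive quotients $F_{\alpha+1}/F_\alpha$ being $<\kappa$-presented in $\Modl{\mathcal S}$; refining if necessary, one arranges that each $F_\alpha$ is itself pure in $F$ and $<\kappa$-presented relative to the filtration. The only extra point is that each quotient $F_{\alpha+1}/F_\alpha$ is flat: since $F_\alpha$ is pure in $F$ and $F$ is flat, and purity is transitive, the quotient is pure-projective-by-flat in the pure-exact structure — more precisely, a pure quotient of a flat object by a flat (pure) subobject is flat, because flatness in $\Modl{\mathcal S}$ is exactly the property that every epimorphism onto it is pure, and one checks this passes to such quotients using that $\Flatl{\mathcal S}$ is closed under pure subobjects (so $F_\alpha \in \Flatl{\mathcal S}$) together with the fact that a pure-exact sequence with flat ends has flat middle/quotient. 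This yields the pure-filtration of $F$ by $<\kappa$-presented flat modules asserted in (2).

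For part (1), I would upgrade the pure filtration of (2) to a \emph{direct union} inside $\Flatl{\mathcal S}$ by closing each term under the relevant ``flat hull''. Concretely, given any $<\kappa$-presented pure submodule $G \subseteq F$ with $G$ flat, I want a $<\kappa$-presented flat \emph{subobject of $F$ in the category $\Flatl{\mathcal S}$} containing $G$; the natural candidate is to take the $\Flatl{\mathcal S}$-reflection, or equivalently to kill the $\tau$-torsion of the relevant pushout/quotient, and use that $t_\tau$ commutes with $<\kappa$-directed colimits (from hereditariness) to keep the size bound. Since $\mathcal A$ is closed under pure subobjects and $\tau$ is hereditary, $\Flatl{\mathcal S}$ is closed under subobjects-that-are-torsion-free and the subobjects of $F$ computed in $\Flatl{\mathcal S}$ form a directed family whose union is $F$; a standard closing-off argument (build an increasing $\omega$-chain alternating between ``take a bigger $<\kappa$-presented piece of $F$'' and ``apply the $\Flatl{\mathcal S}$-reflection'', then take the union) produces the desired $<\kappa$-presented flat subobjects, and their directed union is $F$ because every element of $F$ appears at some stage.

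The main obstacle I expect is precisely the size control in part (1): one must verify that the $\Flatl{\mathcal S}$-reflection (equivalently, factoring out $\tau$-torsion, or the relevant solution-set construction) of a $<\kappa$-presented object stays $<\kappa$-presented \emph{inside $\Flatl{\mathcal S}$} — this is where hereditariness of $\tau$ is essential (it makes $t_\tau$ finitary up to $\kappa$, so it does not blow up cardinalities) — and that ``presented in $\Flatl{\mathcal S}$'' in statement (1) is compatible with ``presented in $\Modl{\mathcal S}$'' in statement (2) after this reflection. The secondary technical point is the transitivity/permanence of flatness along pure subobjects and pure quotients, which should follow cleanly from the purity-based definition of flat object together with the hypothesis that $\Flatl{\mathcal S}$ is closed under pure subobjects, and which is presumably already isolated in the earlier sections on the torsion theory $\tau$ and in Corollary \ref{c:EnoughFlatsImpliesPureSubmodules}.
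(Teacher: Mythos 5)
There is a genuine gap here, and it comes from a misreading of the statement rather than from a technical slip. Throughout your argument you take ``flat object in $\mathcal A$'' to mean ``object of $\mathcal A\simeq\Flatl{\mathcal S}$'', i.e.\ a flat functor in $\Modl{\mathcal S}$ (``let $F$ be a flat functor, i.e.\ an object of $\Flatl{\mathcal S}$''). In the paper this phrase has a different, internal meaning (see the Definition in Section 5 and Proposition \ref{p:CharacterizationFlats}): an object $F$ of $\Flatl{\mathcal S}$ is \emph{flat in $\Flatl{\mathcal S}$} when every epimorphism \emph{of the category} $\Flatl{\mathcal S}$ onto $F$ is pure, which is equivalent to $F$ being $\tau$-cotorsion-free. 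Under your reading the hypothesis ``the class of flat objects is closed under pure subobjects'' degenerates into the automatic fact that pure submodules of flat modules are flat, so it does no work in your proof; and what you actually prove is Theorem \ref{t:Deconstructible}, which holds with no hypotheses at all, not Theorem \ref{t:FAACFlats}/\ref{t:Rump}. The real content of the theorem is that the purification filtration of Theorem \ref{t:Purification} can be chosen so that its terms \emph{and} its consecutive quotients are $\tau$-cotorsion-free, and this is exactly where the two hypotheses enter: closure of the internal flats under pure subobjects (the stated hypothesis, which by Corollary \ref{c:EnoughFlatsImpliesPureSubmodules} holds for instance when there are enough flats) gives that each pure term $F_\alpha$ is again a flat object of $\mathcal A$, and Proposition \ref{p:PropertiesFlats}(1) (closure of internal flats under pure quotients) gives the same for $F_{\alpha+1}/F_\alpha$. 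With this reading the proof is short: take $\kappa=w(\mathcal S)^+$ and combine Theorem \ref{t:Deconstructible} with those two closure properties.

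A secondary problem is your plan for part (1). You propose to close each $<\kappa$-presented pure piece under a ``flat hull'' by applying the $\Flatl{\mathcal S}$-reflection and killing $\tau$-torsion, and you identify the size control of that reflection as the main obstacle. No such step is needed (a pure submodule of a flat module already lies in $\Flatl{\mathcal S}$), and, more importantly, it is not available in general: flat reflections exist only under the conditions of Theorem \ref{t:Preabelian} (e.g.\ $\Modr{\mathcal S}$ locally coherent and weak dimension at most $2$), which are not among the hypotheses of Theorem \ref{t:FAACFlats}. The cardinality control you worry about is handled instead by Proposition \ref{p:KappaPresented}(2): for $\kappa=w(\mathcal S)^+$, being $<\kappa$-presented is the same as having cardinality $<\kappa$, so the purification construction of Theorem \ref{t:Purification} already produces pieces of the right size, and the only point left to check is the $\tau$-cotorsion-freeness discussed above.
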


Finally, we characterize those finitely accessible additive categories $\mathcal A$ having enough flats and projectives (theorems \ref{t:ExistenceEnoughFlats} and \ref{t:ExistenceEnoughProjectives}):

\begin{theoremsec}\label{t:FAACEnoughFlats}
	Let $\mathcal A$ be a finitely accessible additive category and suppose that $\tau$ is hereditary. The following assertions are equivalent:
	\begin{enumerate}
		\item There are enough flats (resp. projective) objects in $\mathcal A$.
		
		\item The torsion theory $\tau$ is jansian and for every representable functor $H_a$ of $(\fp(\mathcal A)^{\op},\Ab)$ there exists an epimorphism $f^a:G_a \rightarrow C_\tau(H_a)$ in $(\fp(\mathcal A)^{\op},\Ab)$ with $G_a$ being a $\tau$-cotorsion-free flat (resp. projective) module in $(\fp(\mathcal A)^{\op}, \Ab)$.
		
		\item There exists a pseudoprojective, $\tau$-cotorsion-free and flat (resp. projective) in $(\fp(\mathcal A)^{\op},\Ab)$ module $P$ such that $\mathcal T_\tau=\{X \in (\fp(\mathcal A)^{\op},\Ab) \mid \Hom(P,X)=0\}$.
	\end{enumerate}
	Moreover, when there are enough flats in $\mathcal A$, the class of flat objects in $\mathcal A$ is closed under pure subobjects.
\end{theoremsec}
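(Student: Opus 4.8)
The plan is to transport the statement through Crawley--Boevey's representation theorem \cite{Crawley}, which identifies $\mathcal A$ with the category $\Flatl{\mathcal S}$ of flat functors on $\mathcal S=\fp(\mathcal A)^{\op}$, carries $(\fp(\mathcal A)^{\op},\Ab)$ to $\Modl{\mathcal S}$ and the representables $H_a$ to the finitely generated projective generators, and carries $\tau$ to the torsion theory cogenerated by $\Flatl{\mathcal S}$; thus it suffices to prove the two underlying statements for $\Flatl{\mathcal S}$, which is what Theorems~\ref{t:ExistenceEnoughFlats} and~\ref{t:ExistenceEnoughProjectives} do, the flat and projective versions running in parallel. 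The first thing I would do is read off, from the description of $\tau$, how morphisms and purity look in $\Flatl{\mathcal S}$: a morphism $g\colon G\to F$ of flat functors is an epimorphism in $\Flatl{\mathcal S}$ exactly when $\Coker_{\Modl{\mathcal S}}(g)\in\mathcal T_\tau$, a pure epimorphism in $\Flatl{\mathcal S}$ is exactly a surjection in $\Modl{\mathcal S}$ between flat functors, and every submodule of a flat functor is the image of a flat functor. Putting these together yields the local criterion I expect to use throughout: $F$ is a \emph{flat} object of $\Flatl{\mathcal S}$ iff $F$ has no nonzero $\tau$-torsion quotient in $\Modl{\mathcal S}$, i.e.\ iff $F$ is $\tau$-cotorsion-free (and flat), and $F$ is a \emph{projective} object of $\Flatl{\mathcal S}$ iff moreover $F$ is pure-projective.

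For $(3)\Rightarrow(1)$ I would start from $\mathcal T_\tau=\{X:\Hom(P,X)=0\}$, which already forces $\tau$ to be jansian, since $\mathcal T_\tau$ is then a $\Hom$-kernel and hence closed under products. Given a flat functor $F$, form the canonical map $u\colon G=\bigoplus_{\Hom(P,F)}P\to F$ onto the trace $\tr_P(F)$; its cokernel $F/\tr_P(F)$ is a reject of $P$, so it lies in $\{X:\Hom(P,X)=0\}=\mathcal T_\tau$, whence $u$ is an epimorphism in $\Flatl{\mathcal S}$, and $G$ is flat as a coproduct of copies of the flat module $P$. The decisive point is that $G$ is not merely an object but a flat object of $\Flatl{\mathcal S}$: by the criterion above one must show $G$ has no nonzero $\tau$-torsion quotient, and here $P$ being $\tau$-cotorsion-free together with $\tau$ hereditary does the work, with pseudoprojectivity of $P$ ensuring this survives the trace construction (and, in the projective version, making the pure-projective $G$ a projective object of $\Flatl{\mathcal S}$). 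Thus $u$ witnesses enough flats (resp.\ projectives). Running the same construction with $C_\tau(H_a)$ in place of $F$ and restricting $u$ produces the epimorphisms $f^a\colon G_a\to C_\tau(H_a)$, so $(3)\Rightarrow(2)$.

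For $(1)\Rightarrow(3)$ I would reverse this: assuming enough flats, choose for each object $a$ of $\mathcal S$ a flat object $P_a$ of $\Flatl{\mathcal S}$ with an $\Flatl{\mathcal S}$-epimorphism onto $C_\tau(H_a)$ (in case (2) the $G_a$ are handed to us), and set $P=\bigoplus_a P_a$ (resp.\ $\bigoplus_a G_a$), first enlarging each summand to a sufficiently large coproduct of copies of itself so that $P$ becomes pseudoprojective without disturbing the other properties. Since the $H_a$ generate $\Modl{\mathcal S}$ and $\tau$ is hereditary, the surjections $P_a\to C_\tau(H_a)$ force $\{X:\Hom(P,X)=0\}=\mathcal T_\tau$; in particular $\tau$ is jansian, and by the criterion of the first paragraph $P$ is $\tau$-cotorsion-free and flat (resp.\ additionally pure-projective). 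This gives $(1)\Rightarrow(3)$, and $(2)\Rightarrow(3)$ is the same construction with $P=\bigoplus_a G_a$; together with $(3)\Rightarrow(2)$ this closes the cycle $(1)\Leftrightarrow(2)\Leftrightarrow(3)$.

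Finally, when $\Flatl{\mathcal S}$ has enough flats, the class of flat objects of $\Flatl{\mathcal S}$ is, by the first paragraph, the class of $\tau$-cotorsion-free flat functors, and its closure under pure subobjects is precisely Corollary~\ref{c:EnoughFlatsImpliesPureSubmodules}; translating back gives the last assertion. I expect the main obstacle to be exactly the claim in $(3)\Rightarrow(1)$ that a coproduct of copies of $P$ is a genuine flat (resp.\ projective) object of $\Flatl{\mathcal S}$ --- equivalently that $\Flatl{\mathcal S}$-epimorphisms onto it are pure --- since this is where $\tau$-cotorsion-freeness, pseudoprojectivity, flatness and the jansian hypothesis have to be used simultaneously, and the dual difficulty in $(1)\Rightarrow(3)$ is to compress the covering data supplied by ``enough flats'' into a single pseudoprojective module whose $\Hom$-kernel is exactly $\mathcal T_\tau$. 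The torsion-theoretic input used throughout --- the behaviour of $C_\tau$, of rejects and traces, and of jansian torsion theories --- is that developed in Section~\ref{s:TorsionTheories}.
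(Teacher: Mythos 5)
Your overall strategy coincides with the paper's: reduce via Crawley--Boevey to $\Flatl{\mathcal S}$, identify flat (resp.\ projective) objects of $\Flatl{\mathcal S}$ with $\tau$-cotorsion-free flat (resp.\ projective) modules via Proposition~\ref{p:CharacterizationFlats}, and run the equivalences through the torsion-theoretic machinery of Section~\ref{s:TorsionTheories}. Your $(3)\Rightarrow(1)$ via the trace map $P^{(\Hom(P,F))}\to F$ is correct and worth keeping: pseudoprojectivity makes $\tr_P$ a radical, so $F/\tr_P(F)\in\mathcal T_\tau$ and the trace map is an epimorphism in $\Flatl{\mathcal S}$ from a $\tau$-cotorsion-free flat (hence flat-in-$\Flatl{\mathcal S}$) module; the paper's written proof in fact only records $(1)\Rightarrow(2)\Rightarrow(3)$, so you are supplying the closing implication rather than duplicating it.

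The genuine gap is in $(1)\Rightarrow(3)$ (equivalently $(2)\Rightarrow(3)$): the pseudoprojectivity of $P=\bigoplus_a G_a$. Your proposed device --- ``first enlarging each summand to a sufficiently large coproduct of copies of itself so that $P$ becomes pseudoprojective'' --- cannot work. By Proposition~\ref{p:Pseudoprojective}, pseudoprojectivity of $M$ is equivalent to $(\Gen(M),\{X\mid\Hom(M,X)=0\})$ being a cohereditary torsion theory, and both $\Gen(M)$ and $\{X\mid\Hom(M,X)=0\}$ are unchanged when $M$ is replaced by $M^{(I)}$; so a coproduct of copies of $M$ is pseudoprojective exactly when $M$ is, and no enlargement creates the property. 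The correct route is the paper's: first establish (as in $(1)\Rightarrow(2)$) that $L_\tau(H_a)=C_\tau(H_a)$ is $\tau$-dense, so $\tau$ is jansian and $P'=\bigoplus_a L_\tau(H_a)$ is pseudoprojective with $\Gen(P')=\mathcal C_\tau$ by Proposition~\ref{p:CharacterizationJansian}; then observe that $G=\bigoplus_a G_a$ is $\tau$-cotorsion-free (Propositions~\ref{p:CharacterizationFlats} and~\ref{p:PropertiesCotorsionFree}), hence $G\in\Gen(P')$, while $P'\in\Gen(G)$ because each $L_\tau(H_a)$ is a quotient of $G_a$; therefore $\Gen(G)=\Gen(P')$ and $\{X\mid\Hom(G,X)=0\}=\mathcal T_\tau$, and pseudoprojectivity of $G$ follows from criterion (2) of Proposition~\ref{p:Pseudoprojective} because $\Gen(G)$ is closed under extensions and $\mathcal T_\tau$ is closed under quotients. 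A minor additional point: in this direction you should derive jansianity \emph{before} claiming $\{X\mid\Hom(P,X)=0\}=\mathcal T_\tau$, since the backward inclusion uses criterion (4) of Proposition~\ref{p:CharacterizationJansian}, not the other way around.
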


\section{Functor categories}
\label{sec:preliminaries}

In this section, we state the notation and terminology that we use in the paper. We denote by $|X|$ the cardinality of a set $X$. The next cardinal of a cardinal $\kappa$ is denoted by $\kappa^+$. The cardinal $\kappa$ is said to be \textit{regular} if it is not the union of a family of less that $\kappa$ sets with cardinality smaller than $\kappa$.

A category $\mathcal B$ is \textit{preadditive} if for each pair of objects $A, B$ of $\mathcal B$, the set $\Hom_{\mathcal B}(A,B)$ is equipped with an abelian group structure such that the composition is bilinear.  Let $f:A \rightarrow B$ be a morphism in $\mathcal B$. If $A'$ is a subobject of $A$, the restriction of $f$ to $A'$ is denoted $f\rest A'$. A \textit{pseudocokernel} or a \textit{weak cokernel} of $f$ is a morphism $c:B \rightarrow C$ such that $cf=0$ and satisfying that for any morphism $d:B \rightarrow C'$ with $df=0$, there exists $g:C \rightarrow C'$ (not necessarily unique) satisfying $gc=d$. Dually, they are defined \textit{pseudokernels} or \textit{weak kernels}.

Fixed a class of objects $\mathcal C$ of $\mathcal B$, a morphism $f:B \rightarrow C$ with $C \in \mathcal C$ is a \textit{$\mathcal C$-preenvelope} if $\Hom_{\mathcal B}(f,C')$ is monic for every $C' \in \mathcal C$; $f$ is a \textit{$\mathcal C$-reflection} if $\Hom(f,C')$ is an isomorphism for every $C' \in \mathcal C$. Every object has a $\mathcal C$-reflection if and only if $\mathcal C$ is a \textit{reflective subcategory} of $\mathcal B$, that is, the inclusion functor from $\mathcal C$ to $\mathcal B$ has a left adjoint. Dually, they are defined $\mathcal C$-precovers, $\mathcal C$-coreflections and coreflective subcategories. If $\mathcal B$ is an abelian category and $\mathcal X$ is the class of all injective objects, then $\mathcal X$-envelopes coincide with the classical notion of injective envelope or injective hull. We will denote by $E(M)$ the injective envelope of the object $M$.

The preadditive category $\mathcal B$ is \textit{additive} if it has a zero object and finite products. An \textit{additive subcategory} of $\mathcal B $ is a full subcategory $\mathcal C$ of $\mathcal B$ containing the zero object and being closed under finite products. Recall that an additive category $\mathcal C$ is \textit{preabelian} if it has kernels and cokernels. 

An \textit{exact category} is an additive category $\mathcal B$ equipped with a class of kernel-cokernel pairs $\mathcal E$, called short exact sequences, that satisfies the axioms [E0], [E1] and [E2] (and their opposites) of \cite[Definition 2.1]{Buhler}. The typical example of an exact category is the class of all short exact sequences in an abelian category. If $\mathcal X$ is a class of objects of $\mathcal B$, an object $B$ of $\mathcal B$ is $\mathcal X$-filtered if there exists a family of subobjects of $B$ (called the $\mathcal X$-filtration), $(B_\alpha \mid \alpha < \mu)$, indexed by some ordinal $\mu$, such that $B=\bigcup_{\alpha < \mu}B_\alpha$, $B_0=0$ and 
\begin{displaymath}
	\begin{tikzcd}
		0 \arrow{r} & B_\alpha \arrow[hook]{r}{u_\alpha} & B_{\alpha+1} \arrow{r} & \Coker u_\alpha \arrow{r} & 0
	\end{tikzcd}
\end{displaymath}
is short exact with $\Coker u_\alpha \in \mathcal X$ for every $\alpha < \kappa$. A class $\mathcal C$ of $\mathcal B$ is \textit{deconstructible} if there exists a set of objects $\mathcal X \subseteq \mathcal C$ such that $\mathcal C$ consists of all $\mathcal X$-filtered modules.

An object $B$ of the additive category $\mathcal B$ is \textit{finitely presented} if the functor $\Hom_{\mathcal B}(B,-)$ commutes with direct limits. The additive category $\mathcal B$ is \textit{finitely accessible} if it has direct limits, if every object is a direct limit of finitely presented objects and if the full subcategory, $\fp(\mathcal B)$, of finitely presented objects of $\mathcal B$ is skeletally small.

Let $\mathcal S$ be a small preadditive category that we fix through the paper. We use under-case letters for objects and morphisms in $\mathcal S$. A \textit{left $\mathcal S$-module} is an additive functor $M$ from $\mathcal S$ to the category of abelian groups. Morphisms between left $\mathcal S$-modules $M$ and $N$ are just natural transformations $f:M \rightarrow N$; we will denote by $f_a$ the group morphism from $M(a)$ to $N(a)$ for each $a \in \mathcal S$. Left $\mathcal S$-modules and morphisms form a Grothendieck category, which we denote by $\Modl{\mathcal S}$, having the family of \textit{representable functors}, $\{\Hom_{\mathcal S}(a,-)\mid a \in \mathcal S\}$, as a set of finitely generated projective generators. We denote each representable functor $\Hom_{\mathcal S}(a,-)$ by $H_a$. Given two left $\mathcal S$-modules $M$ and $N$, the unadorned $\Hom(M,N)$ stands for the set of morphisms $\Hom_{\Modl{\mathcal S}}(M,N)$ between $M$ and $N$ in $\Modl{\mathcal S}$. Dually, $\Modr{\mathcal S}$ is the functor category consisting of all additive functors from the opposite category of $\mathcal S$, $\mathcal S^{\textrm{op}}$, to the category of abelian groups. Throughout the paper, module means left $\mathcal S$-module.

This notation and terminology is based on the fact that if $\mathcal S$ consists of one object $a$, then $\Modl{\mathcal S}$ is equivalent to the module category $\Modl{\mathcal \End_{\mathcal S}(a)}$. For this reason, modules (over non commutative rings) are our main source of inspiration. Moreover, they serve to illustrate the results on functor categories in a more concrete way, see Section \ref{s:Ring}. Through the paper, $R$ will be an associative ring with unit and we denote by $\Hom_R(M,N)$ the set of morphisms from $M$ to $N$ for every pair of left $R$-modules.


Many of the notions in the Grothendieck category $\Modl{\mathcal S}$ can be interpreted "locally". For instance, if $M$ is a module and $F,G$ are submodules of $M$, the sum $F+G$ and the intersection $F \cap G$ computed as, for instance, in \cite[IV.3]{Stenstrom}, are the functors that satisfy $(F+G)(a)=F(a)+G(a)$ and $(F \cap G)(a) = F(a) \cap G(a)$ for each $a \in \mathcal S$. The kernel (resp. image) of a morphism in $\Modl{\mathcal S}$, $f:M \rightarrow N$, is the submodule $\Ker f$ of $M$ (resp. $\Img f$ of $N$) that satisfies $(\Ker f)(a)=\Ker f_a$ (resp. $(\Img f)(a)=\Img f_a$) for each $a \in \mathcal S$.

One useful fact is the following characterization of submodules of a module. A submodule of a module $M$ is determined by a family of abelian groups, $\{K_a \mid a \in \mathcal S\}$, satisfying that $K_a$ is a subgroup of $M(a)$ with the additional property that for any morphism $r:a \rightarrow b$ in $\mathcal S$, $M(r)(K_a) \leq K_b$. For such family of groups, the functor $K$ defined by $K(a)=K_a$ for every $a \in \mathcal S$ and $K(r) = M(r) \rest K_a$ for every morphism $r:a \rightarrow b$ in $\mathcal S$, is a submodule of $M$. We call it the \textit{submodule induced by the family of subgroups $\{K_a \mid a \in \mathcal S\}$}.


An element $x$ of $M$ is an element of $\bigcup_{a \in \mathcal S}M(a)$; we use the notation $x \in M$ and, if $x \in M(a)$ for some $a$, we say that $x$ is an $a$-element. A subset $X$ of $M$ is a subset of $\bigcup_{a \in \mathcal S}M(a)$, for which we use the notation $X \subseteq M$. The \textit{cardinality of $X$} is $|X|=\sum_{a \in \mathcal S}|X(a)|$, where $X(a)=\{x \in X \mid x \in M(a)\}$ for each $a \in \mathcal S$. The \textit{weight of $\mathcal S$} \cite{Simson77} is the cardinal number $w(\mathcal S)= \sum_{a,b \in \mathcal S}|\Hom_{\mathcal S}(a,b)|$.

Given $a \in \mathcal S$, a subset $X \subseteq M(a)$ and a submodule $I$ of $H_a$, we denote by $IX$ the submodule of $M$ induced by the family of subgroups $\left\{IX(b)\mid b \in \mathcal S\right\}$, where
\begin{displaymath}
	IX(b)=\left\{\sum_{i=1}^nM(r_i)(x_i)\mid r_i \in I(b), x_i \in X\right\}
\end{displaymath}
for every $b \in \mathcal S$. In particular, for $I=H_a$, we obtain \textit{the submodule generated by} $X$, $H_aX$, which we denote by $\langle X \rangle$ as well. In \cite[Lemma 10.1.12]{Prest} it is essentially proved that the module $H_a$ is equal to $\langle 1_a\rangle$ for each $a \in \mathcal S$, where $1_a$ denotes the identity of $a$ in $\mathcal S$. Moreover, given $x \in M(a)$, it is very easy to see that there is a morphism  $m^x:H_a \rightarrow M$ given by the rule $m^x_b(r) = M(r)(x)$ for any $r \in H_a(b)$, whose image is $\langle x\rangle$.

Let $M$ be a module and $L$, a submodule of $M$. For any $a$-element $x$ of $M$ ($a \in \mathcal S$), it is easy to verify that the family of abelian groups $\{(x:L)(b) \mid b \in \mathcal S\}$, where
\begin{displaymath}
(x:L)(b) = \{r \in H_a(b) \mid M(r)(x) \in L(b)\},
\end{displaymath}
induces a submodule $(x:L)$ of $H_a$. This submodule is, precisely, the kernel of the morphisms $m^{x+L}:H_a \rightarrow M/L$ (where $x+L$ is the $a$-element $x+L(a)$ of $M/L$), so that $H_a/(x:L) \cong \langle x+L\rangle$.

Let $\kappa$ be a cardinal. A left $\mathcal S$-module $M$ is $ \kappa$-generated (resp. $\kappa$-presented) if there exists a family of representable functors, $\{S_i\mid i \in I\}$, (resp. families of representable functors $\{S_i\mid i \in I\}$ and $\{T_j \mid j \in J\}$) with $|I|=\kappa$ (resp. $|I|$ and $|J|$ having cardinality $\kappa$) and an epimorphism $\bigoplus_{i \in I}S_i \rightarrow M$ (resp. and an exact sequence $\bigoplus_{j \in J}T_j \rightarrow \bigoplus_{i \in I}S_i \rightarrow M \rightarrow 0$). If $\kappa$ is finite we just say that $M$ is finitely generated (resp. finitely presented). The following result is well known for modules over rings.

\begin{proposition}\label{p:KappaPresented}
	Let $\kappa$ be a cardinal and $M$, a left $\mathcal S$-module. 
	\begin{enumerate}
		\item $M$ is $ \kappa$-generated if and only if there exists a subset of cardinality $\kappa$, $X \subseteq M$, such that $M=\langle X \rangle$.
		
		\item If $\kappa$ is infinite regular and strictly greater than $w(\mathcal S)$, then $M$ is $<\kappa$-generated if and only if $M$ is $<\kappa$-presented if and only if $|M|<\kappa$.
	\end{enumerate}
\end{proposition}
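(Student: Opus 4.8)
The plan is to translate the standard module-theoretic arguments into the functor-category language using the tools assembled in the preamble, chiefly the morphisms $m^x\dd H_a\to M$ with image $\langle x\rangle$, the identification $H_a=\langle 1_a\rangle$, and the isomorphism $H_a/(x:L)\cong\langle x+L\rangle$.

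For part (1), first suppose $M$ is $\kappa$-generated, so there is an epimorphism $\pi\dd\bigoplus_{i\in I}S_i\to M$ with $|I|=\kappa$ and each $S_i=H_{a_i}$ representable. Let $x_i=\pi_{a_i}(1_{a_i})\in M(a_i)$ be the image of the canonical generator of $S_i$, and put $X=\{x_i\mid i\in I\}$. Since $H_{a_i}=\langle 1_{a_i}\rangle$, functoriality gives $\pi(S_i)=\langle x_i\rangle$, and surjectivity of $\pi$ forces $M=\sum_{i\in I}\langle x_i\rangle=\langle X\rangle$; note $|X|\le\kappa$, and if some $x_i=0$ one harmlessly pads $X$ back up to cardinality $\kappa$. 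Conversely, if $M=\langle X\rangle$ with $|X|=\kappa$, choose for each $x\in X(a)$ the morphism $m^x\dd H_a\to M$; these assemble into a map $\bigoplus_{x\in X}H_{a(x)}\to M$ whose image is $\sum_{x\in X}\langle x\rangle=\langle X\rangle=M$, exhibiting $M$ as $\kappa$-generated.

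For part (2), assume $\kappa$ is infinite regular with $\kappa>w(\mathcal S)$. The implication ``$<\kappa$-presented $\Rightarrow$ $<\kappa$-generated'' is immediate from the definitions. For ``$<\kappa$-generated $\Rightarrow$ $|M|<\kappa$'': by part (1) write $M=\langle X\rangle$ with $|X|<\kappa$; then each element of $M(b)$ is a finite sum $\sum M(r_i)(x_i)$ with $x_i\in X$ and $r_i\in\Hom_{\mathcal S}(a(x_i),b)$, so $|M(b)|$ is bounded by the number of finite tuples from a set of size $\le|X|\cdot w(\mathcal S)<\kappa$, which is $<\kappa$ by regularity; summing over the $\le w(\mathcal S)<\kappa$ objects $b$ and using regularity again gives $|M|<\kappa$. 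For ``$|M|<\kappa\Rightarrow$ $<\kappa$-presented'': taking $X=M$ we get by part (1) an epimorphism $p\dd\bigoplus_{x\in M}H_{a(x)}\to M$ from a $<\kappa$-generated free module; it remains to see $K=\Ker p$ is $<\kappa$-generated. The domain of $p$ has cardinality $\le|M|\cdot w(\mathcal S)<\kappa$ (same counting as before), hence so does its submodule $K$, and applying the already-proved implication ``$|K|<\kappa\Rightarrow K$ is $<\kappa$-generated'' (via part (1), with $X=K$) finishes the proof.

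The only genuinely delicate point is the cardinal arithmetic in part (2): one must check that ``finitely many composable arrows applied to $<\kappa$ generators'' still yields fewer than $\kappa$ elements, which is exactly where regularity of $\kappa$ and the hypothesis $\kappa>w(\mathcal S)$ are used (the set of finite sequences over a set of size $<\kappa$ has size $<\kappa$ when $\kappa$ is regular and infinite). Everything else is a routine unwinding of the definitions of $\langle X\rangle$, $m^x$, and $\kappa$-(generated/presented) recorded above; no new ideas beyond the module case are needed, since the representable functors $H_a$ play the role that $R$ plays for ordinary modules.
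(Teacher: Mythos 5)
Your argument follows the same route as the paper's (the paper's own proof of (1) is ``straightforward'' and of (2) is a two-line sketch of exactly your counting step), and you correctly supply the two implications the paper leaves implicit, in particular deducing that the kernel of $p$ is $<\kappa$-generated from the bound on its cardinality. The one imprecise point is the bound on $|M(b)|$ by ``the number of finite tuples from a set of size $\le|X|\cdot w(\mathcal S)$'': the set of finite sequences over a set of size $<\kappa$ has size $<\kappa$ only when $\kappa$ is \emph{uncountable}, so your parenthetical claim and this step fail in the admissible edge case $\kappa=\aleph_0$ (which occurs when $w(\mathcal S)$ is finite, where the assertion is that finitely generated modules are finite, not merely countable). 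The fix is what the paper implicitly uses: $\langle x\rangle(b)=\Img m^x_b$ is the image of a group homomorphism from $H_{a(x)}(b)$, hence a subgroup of size at most $w(\mathcal S)$, and $M(b)=\sum_{x\in X}\langle x\rangle(b)$ is a sum of subgroups, so its elements are finite sums that collapse to one summand per subgroup; this gives $|M(b)|<\kappa$ in all cases, and regularity of $\kappa$ is then only needed to sum over the objects $b$ of $\mathcal S$. With that adjustment your proof is complete and matches the paper's.
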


\begin{proof}
	(1) Straightforward.
	
	\medskip
	
	(2) Simply notice that if $M=\langle X \rangle$ for some subset $X$ of $M$ with cardinality smaller than $\kappa$, then $\langle x \rangle(a)$ has cardinality smaller than $\kappa$ and so does $M(a)$, since $M(a)=\sum_{x \in X}\langle x \rangle(a)$. Then $|M| = |\bigcup_{a \in \mathcal S}M(a)|$ is smaller than $\kappa$.
\end{proof}

Now we recall some facts concerning the reject and the trace with respect to a class of modules. Given a class $\mathcal X$ of modules and $M$, a module,  we denote by $\rej_{\mathcal X}(M)$ the \textit{reject of $\mathcal X$ in $M$}, that is, the intersection of the kernels of all morphisms $f:M \rightarrow X$ with $X \in \mathcal X$. The reject of $\mathcal X$ is a radical (i. e., $\rej_{\mathcal X}(M/\rej_{\mathcal X}(M))=0$ for each module $M$) and, if we denote $\rej_{\mathcal X}\big(\rej_{\mathcal X}(M)\big)$ by $\rej_{\mathcal X}^2(M)$, then $\rej_{\mathcal X}(M)/\rej^2_{\mathcal X}(M)$ is the reject of $\mathcal X$ in $M/\rej_{\mathcal X}(M)$. If $K$ is a submodule of $M$, we denote by $\rej_{\mathcal X}^K(M)$ the intersection of the kernels of all morphisms $f:M \rightarrow X$ satisfying that $X \in \mathcal X$ and $K \leq \Ker f$. Clearly, it is satisfied $\rej_{\mathcal X}(M/K) = \rej_{\mathcal X}^K(M)/K$.
It is easy to see that the kernel of every $\mathcal X$-preenvelope $f:M \rightarrow X$ is precisely $\rej_{\mathcal X}(M)$. The \textit{class of modules cogenerated by $\mathcal X$}, $\Cogen(\mathcal X)$, consists of all modules $X$ for which there exists a monomorphism from $X$ to a direct product of modules belonging to $\mathcal X$. Clearly, $X \in \Cogen(\mathcal X)$ if and only if $\rej_{\mathcal X}(X)=0$.

Dually, the \textit{trace of $\mathcal X$} in the module $M$ is the submodule $\tr_{\mathcal X}(M)$ which is equal to the sum of the images of all morphisms $f:X \rightarrow M$ with $X \in \mathcal X$. The \textit{class of modules generated by $\mathcal X$}, $\Gen(\mathcal X)$, consists of all modules which are a quotient of a direct sum of modules belonging to $\mathcal X$. Clearly, $M \in \Gen(\mathcal X)$ if and only if $\tr_{\mathcal X}(M)=M$. The following result, which is well known in module categories, state some properties of the trace in our context:

\begin{lemma}\label{l:GenMClosedExtensions}
	Let $M$ be a module:
	\begin{enumerate}
		\item For every module $N$ and $a \in \mathcal S$, $\tr_M(H_a)\cdot N(a) \leq \tr_M(N)$.
		
		\item $\Gen(M)$ is closed under extensions if and only if $\tr_M(X/tr_M(X))=0$ for every module $X$ (i. e., $\tr_M$ is a radical).
	\end{enumerate}
\end{lemma}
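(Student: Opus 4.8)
The plan is to treat both parts as the natural translations of classical facts about the trace submodule of a module over a ring; since $\tr_M$, sums and intersections of submodules, images, quotients and preimages are all computed pointwise on $\mathcal{S}$, I do not expect any genuine obstacle — the only care needed is bookkeeping with the ``local'' descriptions of submodules. Before starting I would record two elementary properties of the trace, valid in $\Modl{\mathcal{S}}$ exactly as over a ring: it is \emph{monotone} ($L \leq X$ implies $\tr_M(L) \leq \tr_M(X)$, since a morphism $M \to L$ followed by the inclusion has the same image), and \emph{idempotent} ($\tr_M(\tr_M(X)) = \tr_M(X)$, since every $f\colon M \to X$ factors through $\tr_M(X)$). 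In particular $\tr_M(X) \in \Gen(M)$ for every module $X$, and $\Gen(M)$ is closed under quotients.

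For (1), set $I = \tr_M(H_a)$, a submodule of $H_a$, and recall that $I\cdot N(a)$ is the submodule of $N$ whose value at $b$ is the subgroup generated by the elements $N(r)(x)$ with $r \in I(b)$, $x \in N(a)$; since $\tr_M(N)(b)$ is a subgroup, it suffices to put each such $N(r)(x)$ into it. Using additivity of $N$ on morphisms, together with $I(b) = \tr_M(H_a)(b) = \sum_{f\colon M \to H_a}\Img f_b$, one reduces to the case $r = f_b(m)$ for a single morphism $f\colon M \to H_a$ and $m \in M(b)$. Then I would invoke the morphism $m^x\colon H_a \to N$ from the preliminaries (given by $m^x_c(s) = N(s)(x)$): the composite $m^x f\colon M \to N$ is a morphism out of $M$, so $\Img(m^x f) \leq \tr_M(N)$, and $(m^x f)_b(m) = m^x_b(f_b(m)) = N(r)(x)$, so $N(r)(x) \in \tr_M(N)(b)$, as required.

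For (2), the direction ``$\tr_M$ a radical $\Rightarrow$ $\Gen(M)$ closed under extensions'' is a short diagram chase: given a short exact sequence $0 \to A \to B \to C \to 0$ with $A, C \in \Gen(M)$, monotonicity gives $A = \tr_M(A) \leq \tr_M(B)$, so $B/\tr_M(B)$ is a quotient of $C$, hence lies in $\Gen(M)$; but $\tr_M(B/\tr_M(B)) = 0$ by hypothesis, so $B/\tr_M(B) = 0$, i.e.\ $B \in \Gen(M)$. Conversely, assuming $\Gen(M)$ closed under extensions, fix a module $X$, put $Y = X/\tr_M(X)$, and let $Z \leq X$ be the preimage of $\tr_M(Y)$, so that $\tr_M(X) \leq Z$ and $Z/\tr_M(X) \cong \tr_M(Y)$. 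Both $\tr_M(X)$ and $\tr_M(Y)$ lie in $\Gen(M)$ by idempotency, so the extension $0 \to \tr_M(X) \to Z \to \tr_M(Y) \to 0$ forces $Z \in \Gen(M)$, i.e.\ $\tr_M(Z) = Z$; but monotonicity gives $\tr_M(Z) \leq \tr_M(X) \leq Z$, so $Z = \tr_M(X)$ and therefore $\tr_M(Y) = Z/\tr_M(X) = 0$, which is precisely the radical condition.

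The only point requiring attention is formal rather than substantive: making sure that the symbol $\tr_M(H_a)\cdot N(a)$ is read as the construction $I\cdot X$ of the preliminaries applied with $M$ replaced by $N$, and that forming preimages and quotients in $\Modl{\mathcal{S}}$ is pointwise. I therefore do not anticipate a hard step in either part.
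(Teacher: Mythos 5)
Your proof is correct and follows essentially the same route as the paper: part (1) is the paper's argument verbatim, reducing to a single term and composing $f\colon M\to H_a$ with $m^x\colon H_a\to N$, and the ``radical $\Rightarrow$ closed under extensions'' half of (2) is identical. The only cosmetic difference is in the other half of (2), where you apply closure under extensions to the preimage $Z$ of $\tr_M(X/\tr_M(X))$, an extension of $\tr_M(X/\tr_M(X))$ by $\tr_M(X)$, whereas the paper pulls back each morphism $M\to X/\tr_M(X)$ along the projection to get an extension of $M$ by $\tr_M(X)$; both are instances of the same idea and work equally well.
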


\begin{proof}
	(1) Fix $b \in \mathcal S$ and take $r \in \tr_M(H_a)(b)$ and $x \in N(a)$. We prove that $N(r)(x) \in \tr_M(N)(b)$. Using that $r \in \tr_M(H_a)(b)$ we can find $f^1, \ldots, f^n \in \Hom(M,H_a)$ and $x_1, \ldots, x_n \in M(b)$ such that $r=\sum_{i=1}^nf^i_b(x_i)$. Then
	\begin{displaymath}
		N(r)(x) = \sum_{i=1}^nN(f^i_b(x_i))(x) = \sum_{i=1}^n (m^x \circ f^i)_b(x_i),
	\end{displaymath}
	which means that $N(r)(x) \in \tr_M(N)(b)$.
	
	(2). $\Rightarrow$. Given a module $X$ and $f:M \rightarrow X/\tr_M(X)$, we can compute the pullback of $f$ along the projection $p:X \rightarrow X/\tr_M(X)$ to get the following commutative diagram with exact rows
	\begin{displaymath}
		\begin{tikzcd}
			0 \arrow{r} & \tr_M(X) \arrow{r} \arrow[equal]{d} & Q \arrow{r} \arrow{d}{\overline f}  & M \arrow{r} \arrow{d}{f}& 0\\
			0 \arrow{r} & \tr_M(X) \arrow[hook]{r} & X \arrow{r}{p} & X/\tr_M(X) \arrow{r} & 0\\
		\end{tikzcd}
	\end{displaymath}
	Since $\Gen(M)$ is closed under extensions, $Q \in \Gen(M)$. Then $\Img(\overline f) \leq \tr_M(X)$ and $p\overline f=0$. This implies that $f=0$.
	
	$\Leftarrow$. Take a short exact sequence
	\begin{displaymath}
		\begin{tikzcd}
			0 \arrow{r} & A \arrow{r} & B \arrow{r} & C \arrow{r} & 0
		\end{tikzcd}
	\end{displaymath}
	with $A,C \in \Gen(M)$. Then, $A \leq \tr_M(B)$, so that $B/\tr_M(B)$ is isomorphic to a quotient of $C$, which means that it belongs to $\Gen(M)$. Using (2), this implies that $B/\tr_M(B)=0$.
\end{proof}

We finish this section discussing the pure exact structure in $\Modl{\mathcal S}$. A short exact sequence in $\Modl{\mathcal{S}}$ is \textit{pure} \cite{Stenstrom68} if every finitely presented module is projective with respect to it. Using the well known properties of finitely presented modules in $\Modl{\mathcal{S}}$ \cite{Stenstrom68}, the proof of \cite[34.5]{Wisbauer} can be translated to this setting, so that a short exact sequence
\begin{displaymath}
	\begin{tikzcd}
		0 \arrow{r} & M \arrow{r} & N \arrow{r} & L \arrow{r} & 0
	\end{tikzcd}
\end{displaymath}
is pure if and only if for any commutative diagram
\begin{displaymath}
	\begin{tikzcd}
		& F \arrow{d}{g} \arrow{r}{f} & G \arrow{d} & & \\
		0 \arrow{r} & M \arrow{r} & N \arrow{r} & L \arrow{r} & 0
	\end{tikzcd}
\end{displaymath}
with $F$ and $G$ finitely presented, there exists $h:G \rightarrow M$ with $hf=g$ (see \cite[Lemma 4.4]{Stovicek14}). We now extend the so-called purification lemma \cite[Lemma 5.3.12]{EnochsJenda} to the functor category $\Modl{\mathcal S}$.

\begin{theorem}\label{t:Purification}
	\begin{enumerate}
		\item Let  $K$ be a submodule of a module $M$. Then, there exists a pure submodule $\overline K$ of $M$ containing $K$ with cardinality less than or equal to $\max\{w(\mathcal S),|K|\}$.
		
		\item Every module is a direct union of $<w(\mathcal S)^+
		$-presented pure submodules.
	\end{enumerate}
\end{theorem}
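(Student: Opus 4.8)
The approach is to run the classical purification recursion of \cite[Lemma 5.3.12]{EnochsJenda}, fed through the lifting characterization of pure-exact sequences recalled immediately before the statement; part (2) will then follow formally from part (1) together with Proposition~\ref{p:KappaPresented}.

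For (1), recall that $\overline K$ being pure in $M$ means that $0\to\overline K\to M\to M/\overline K\to 0$ is pure, so the stated lifting property is what has to be arranged. Write $\lambda=\max\{w(\mathcal S),|K|,\aleph_0\}$ and construct $\overline K$ as the union of an increasing chain $K=K_0\subseteq K_1\subseteq K_2\subseteq\cdots$ of submodules of $M$, each of cardinality at most $\lambda$, designed so as to ``close up'' all purity lifting problems. Call a pair $(f,g)$ --- with $F,G$ finitely presented, $f\dd F\to G$ and $g\dd F\to K_n$ --- a \emph{test pair at stage $n$} if $g$ composed with the inclusion $K_n\hookrightarrow M$ factors as $\gamma f$ for some $\gamma\dd G\to M$; for each test pair fix one such $\gamma_{(f,g)}$, and let $K_{n+1}$ be the submodule of $M$ generated by $K_n$ together with the images $\gamma_{(f,g)}(G)$ over all test pairs at stage $n$. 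Put $\overline K=\bigcup_n K_n$, so $K\subseteq\overline K$. To see $\overline K$ is pure in $M$, consider a lifting problem: finitely presented $F,G$, a morphism $f\dd F\to G$, a morphism $g\dd F\to\overline K$ and $\gamma\dd G\to M$ with $\gamma f$ equal to $g$ followed by $\overline K\hookrightarrow M$. Since $F$ is finitely presented, $\Hom(F,-)$ commutes with the direct limit $\overline K=\li K_n$, so $g$ is $g'\dd F\to K_n$ followed by $K_n\hookrightarrow\overline K$ for some $n$; then $(f,g')$ is a test pair at stage $n$ (with $\gamma$ itself as a witness), and $\gamma_{(f,g')}$ has image contained in $K_{n+1}$ by construction, hence defines a morphism $G\to K_{n+1}\subseteq\overline K$ solving the problem. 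Therefore $\overline K$ is pure.

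It remains to control cardinalities, and this is the step that needs care. One uses: there is a representative set of finitely presented modules of cardinality at most $\max\{w(\mathcal S),\aleph_0\}$ (each finitely presented module being a cokernel of a matrix of morphisms between finite sums of representables, $\Hom(H_b,H_a)\cong\Hom_{\mathcal S}(a,b)$ by Yoneda, and the number of objects of $\mathcal S$ being at most $w(\mathcal S)$); $|G|\le\max\{w(\mathcal S),\aleph_0\}$ for $G$ finitely presented; a morphism out of a finitely generated module is determined by the images of finitely many generators, so $|\Hom(F,N)|\le\max\{|N|,\aleph_0\}$ when $F$ is finitely generated; and the submodule generated by a subset $X$ of a module has cardinality at most $\max\{w(\mathcal S),|X|,\aleph_0\}$. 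Combining these, the number of test pairs at stage $n$ is at most $\max\{|K_n|,w(\mathcal S),\aleph_0\}$, each contributing a set of size at most $\max\{w(\mathcal S),\aleph_0\}$, so $|K_{n+1}|\le\max\{|K_n|,w(\mathcal S),\aleph_0\}$. An induction gives $|K_n|\le\lambda$ for all $n$, whence $|\overline K|\le\aleph_0\cdot\lambda=\lambda$, which is $\max\{w(\mathcal S),|K|\}$ whenever $w(\mathcal S)$ is infinite (the case of interest). I expect this bookkeeping, rather than the structural part, to be the main obstacle.

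For (2), put $\kappa=w(\mathcal S)^+$; this is an infinite regular cardinal strictly larger than $w(\mathcal S)$, so by Proposition~\ref{p:KappaPresented}(2) a module is $<\kappa$-presented exactly when its cardinality is at most $w(\mathcal S)$. Let $\mathcal P$ be the family of pure submodules of $M$ of cardinality at most $w(\mathcal S)$. For any $a$-element $x$ of $M$ the submodule $\langle x\rangle$ has cardinality at most $w(\mathcal S)$, so by (1) some member of $\mathcal P$ contains $x$; thus every element of $M$ lies in some member of $\mathcal P$. Moreover $\mathcal P$ is directed: given $P_1,P_2\in\mathcal P$, the submodule $P_1+P_2$ again has cardinality at most $w(\mathcal S)$, and applying (1) to it yields a member of $\mathcal P$ containing $P_1\cup P_2$. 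Hence $M$ is the directed union of $\mathcal P$, and each member is $<w(\mathcal S)^+$-presented, as claimed.
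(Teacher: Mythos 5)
Your proposal is correct and follows essentially the same route as the paper: the same iterated "close up all finitely-presented lifting problems" recursion over $\omega$ with the same cardinality bookkeeping for part (1), and the same cofinality/directedness argument via Proposition~\ref{p:KappaPresented} for part (2). The only differences are cosmetic (you spell out the purity verification for the union and the counting of test pairs in more detail, and you explicitly flag the harmless $\aleph_0$ adjustment when $w(\mathcal S)$ is finite, which the paper leaves implicit).
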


\begin{proof}
	(1) Set $\kappa=\max\{w(\mathcal S),|K|\}$. We follow the proof of \cite[Lemma 5.3.13]{EnochsJenda}. We are going to construct by induction a chain of submodules $K=K_0 \leq K_1 \leq K_2 \leq \cdots$ of $M$ with $|K_n|\leq \kappa$ and satisfying that $(\star)$ for every commutative diagram
	\begin{displaymath}
		\begin{tikzcd}
			& F \arrow{d}{f} \arrow{r}{g} & G \arrow{d} \\
			0 \arrow{r} & K_n \arrow{r} & M \\
		\end{tikzcd}
	\end{displaymath}
	there exists $h:G \rightarrow K_{n+1}$ satisfying $hg=f$.
	
	For $n=0$, set $K_0=K$.
	
	Suppose that we have constructed $K_n$ for some $n < \omega$. Consider the following set:
	\begin{displaymath}
		\Lambda=\{(f,g,F,G) \mid f:F \rightarrow K_n, \, g:F \rightarrow G \textrm{ are morphisms with f. pres. }F, G \}
	\end{displaymath}
	Then, it is easy to see that $|\Lambda| \leq \kappa$. Consider the subset $\Lambda'$ of $\Lambda$ consisting of all tuples $(f,g,F,G)$ in $\Lambda$ for which there exists some $h:G \rightarrow M$ satisfying $hg=f\rest K_n$. For any tuple $\lambda=(f,g,F,G)$ in $\Lambda'$, fix one such $h_\lambda:G \rightarrow M$ and define
	\begin{displaymath}
		K_{n+1}=K_n+\sum_{\lambda \in \Lambda'}\Img h_\lambda.
	\end{displaymath}
	Since every finitely presented module $H$ satisfies that $|H| \leq w(\mathcal S)$, $|K_{n+1}|\leq \kappa$. Moreover, $K_{n+1}$ clearly satisfies $(\star)$. This concludes the construction.
	
	Finally, setting $\overline K=\bigcup_{n < \omega}K_n$ be obtain the desired pure submodule of $M$.
	
	(2) Every module $M$ is the direct union of its $<w(\mathcal S)^+$-presented submodules. But, by (1) and Proposition \ref{p:KappaPresented}, every the set of $<w(\mathcal S)^+$-presented pure submodules is cofinal in the direct set of all $<w(\mathcal S)^+$-presented submodules.
\end{proof}

Our definition agrees with the usual notion of purity in non-additive accessible categories \cite[Definition 2.27]{AdamekRosicky} (just notice that, for every infinite regular cardinal $\lambda$, the $\lambda$-presentable objects in \cite{AdamekRosicky} are our $<\lambda$-presented). There is a purification lemma for these categories, \cite[Theorem 2.33]{AdamekRosicky}. In Theorem \ref{t:Purification}, we obtain more information about the cardinal of the submodule $\overline K$. On the other hand, notice that (2), is precisely \cite[Corollary 2]{Rump}, since the small modules in \cite{Rump} are, precisely, the $<w(\mathcal S)^+$-presented modules by Proposition \ref{p:KappaPresented}.

A module $M$ is \textit{flat} if every epimorphism $f:X \rightarrow M$ is pure. Flat modules in $\Modl{\mathcal S}$ satisfy the same properties than flat modules in $\Modl R$, see \cite[49.5]{Wisbauer},  \cite[p. 1646]{Crawley}, \cite{OberstRohrl} and \cite{Stenstrom}. We denote by $\Flatl{\mathcal S}$ the class (and the full subcategory) consisting of all flat modules. The category $\Flatl{\mathcal S}$ is a finitely accessible additive category \cite[Theorem 1.4]{Crawley}, in which direct limits are computed in $\Modl{\mathcal S}$ and the finitely presented modules are the finitely generated projective modules in $\Modl{\mathcal S}$. As a consequence of Theorem \ref{t:Purification} we get:

\begin{theorem}\label{t:Deconstructible}
	\begin{enumerate}
		\item Every flat module is the direct union of $<w(\mathcal S)^+$-presented flat modules.
		
		\item Every flat module is filtered by $w(\mathcal S)^+$-presented flat modules. In particular, $\Flatl{\mathcal S}$ is deconstructible.
	\end{enumerate}
\end{theorem}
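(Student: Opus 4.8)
The plan is to deduce both assertions from the purification lemma (Theorem \ref{t:Purification}) together with the fact, recorded just before the statement, that $\Flatl{\mathcal S}$ is a finitely accessible additive category in which direct limits are computed in $\Modl{\mathcal S}$ and whose finitely presented objects are the finitely generated projective $\mathcal S$-modules. The key structural input for (1) is that the class of flat modules is closed under pure submodules in $\Modl{\mathcal S}$: this is one of the standard facts about flat modules in $\Modl{\mathcal S}$ referenced via \cite[49.5]{Wisbauer}. Granting that, part (1) is almost immediate from Theorem \ref{t:Purification}(2): given a flat module $M$, write it as the direct union of its $<w(\mathcal S)^+$-presented pure submodules $M_i$; each $M_i$ is pure in $M$, hence flat, and by Proposition \ref{p:KappaPresented}(2) being $<w(\mathcal S)^+$-presented in $\Modl{\mathcal S}$ is the same as having cardinality at most $w(\mathcal S)$. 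One still has to check that these $M_i$ are $<w(\mathcal S)^+$-presented \emph{in $\Flatl{\mathcal S}$}, not merely in $\Modl{\mathcal S}$; here I would use that finitely presented objects of $\Flatl{\mathcal S}$ are finitely generated projectives of $\Modl{\mathcal S}$, so a presentation of $M_i$ in $\Flatl{\mathcal S}$ is obtained from a flat resolution, and a module of cardinality $\le w(\mathcal S)$ admits a flat (indeed, can be arranged projective) presentation with at most $w(\mathcal S)$ generators and relations by the usual counting argument.

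For part (2) I would argue exactly as in the proof of Theorem \ref{t:Purification}(1), but organised transfinitely to produce a filtration rather than a mere union. Starting from a flat module $M$, I would build a continuous chain $0 = M_0 \le M_1 \le \cdots \le M_\mu = M$ of pure submodules, each step $M_{\alpha+1}/M_\alpha$ being $<w(\mathcal S)^+$-presented. The construction at successor stages is the one used for $\overline K$ in Theorem \ref{t:Purification}(1): enlarge $M_\alpha$ by throwing in, for each of the at most $w(\mathcal S)$-many relevant test diagrams, one witnessing morphism's image, iterating $\omega$ times so that the result is pure in $M$. At limit stages take unions, which remain pure. To guarantee the chain reaches $M$ and that the quotients are small, I would interleave this purification procedure with an enumeration of $M$ (of length the cardinality of $M$) so that at step $\alpha$ the element $x_\alpha$ is captured; then each $M_{\alpha+1}$ differs from $M_\alpha$ by at most $w(\mathcal S)$-many new elements, so $M_{\alpha+1}/M_\alpha$ has cardinality $\le w(\mathcal S)$ and hence is $w(\mathcal S)^+$-presented. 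Each $M_\alpha$ is pure in $M$, hence flat, and purity of $M_\alpha$ in $M_{\alpha+1}$ is inherited, so the short exact sequences $0 \to M_\alpha \to M_{\alpha+1} \to M_{\alpha+1}/M_\alpha \to 0$ are pure-exact with small flat cokernel. This exhibits $M$ as filtered in the pure-exact structure by $w(\mathcal S)^+$-presented flat modules, and since there is (up to isomorphism) only a set of such modules, $\Flatl{\mathcal S}$ is deconstructible in the sense of the excerpt.

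The two delicate points, which I expect to be the main obstacles, are: first, verifying carefully that the submodules produced by the (iterated-$\omega$) purification step are genuinely pure in $M$ and that purity passes to the successive quotients of the filtration — this is bookkeeping, but one must be sure the continuity of the chain and the limit-stage unions do not destroy purity, and here one leans on the characterisation of purity via finitely presented test objects stated in the excerpt; second, the translation between ``$<\kappa$-presented in $\Modl{\mathcal S}$'' and ``$<\kappa$-presented in $\Flatl{\mathcal S}$''. For the filtration it is cleanest to state part (2) with presentation measured in $\Modl{\mathcal S}$ (as the theorem does), so that one only needs Proposition \ref{p:KappaPresented}(2) and the fact that pure submodules of flats are flat, and no comparison of presentability between the two categories is required. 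For part (1), where the statement asks for $<\kappa$-presented \emph{in $\Flatl{\mathcal S}$}, I would spend a line noting that a flat module of cardinality $\le w(\mathcal S)$ is a quotient of a free module on $\le w(\mathcal S)$ generators whose kernel (being a submodule of that free module) again has cardinality $\le w(\mathcal S)$, and then purify that kernel inside the free module to make the quotient flat; relativising Crawley-Boevey's description of $\fp(\Flatl{\mathcal S})$, this gives the required presentation inside $\Flatl{\mathcal S}$.
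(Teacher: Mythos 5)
Your part (1) is correct and is exactly the paper's argument: Theorem \ref{t:Purification}(2) plus the fact that pure submodules of flat modules are flat. Your worry about where presentability is measured is not really an obstacle here, since by Proposition \ref{p:KappaPresented}(2) being $<w(\mathcal S)^+$-presented in $\Modl{\mathcal S}$ is just the cardinality bound $|M|\le w(\mathcal S)$, which is what the purification lemma delivers; the extra paragraph on presentations in $\Flatl{\mathcal S}$ is unnecessary for the statement as written (and note that if $N$ is flat, any epimorphism from a free module onto $N$ is automatically pure, so no purification of the kernel is needed there).

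The genuine gap is in the successor step of part (2). You enlarge $M_\alpha$ to a pure submodule $M_{\alpha+1}$ of $M$ by running the purification procedure of Theorem \ref{t:Purification}(1) directly inside $M$, and you assert that there are ``at most $w(\mathcal S)$-many relevant test diagrams'', whence $|M_{\alpha+1}/M_\alpha|\le w(\mathcal S)$. That count fails once $M_\alpha$ is large: a test diagram is a tuple $(f,g,F,G)$ with $f\colon F\to K_n$, and the number of morphisms from a finitely presented $F$ into $K_n\supseteq M_\alpha$ is of the order of $|K_n|\ge|M_\alpha|$, not $w(\mathcal S)$. So purifying $M_\alpha+\langle x_\alpha\rangle$ inside $M$ only yields $|M_{\alpha+1}|\le\max\{w(\mathcal S),|M_\alpha|\}$, and nothing bounds the layer $M_{\alpha+1}/M_\alpha$ by $w(\mathcal S)$; choosing the witnesses $h_\lambda$ to land in $M_\alpha$ whenever $f$ factors through $M_\alpha$ does not repair the count, since up to $|M_\alpha|$ tuples remain whose $f$ does not so factor. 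The paper's proof sidesteps this by applying Theorem \ref{t:Purification} not in $F$ but in the quotient $F/F_\alpha$, which is flat because $F_\alpha$ is pure in $F$: one purifies the \emph{small} submodule $\langle x_\alpha+F_\alpha\rangle$ there, obtaining a pure submodule $F_{\alpha+1}/F_\alpha$ of $F/F_\alpha$ of cardinality at most $w(\mathcal S)$, and then uses transitivity of purity to conclude that $F_{\alpha+1}$ is pure in $F$. With that modification of the successor step, the rest of your argument (limit stages as unions, flatness and pure-exactness of the layers) goes through as you describe.
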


\begin{proof}
	(1) Follows from Theorem \ref{t:Purification}(2) and the fact that pure submodules of flat modules are flat.
	
	\medskip
	
	(2) Given a module $F \in \Flatl{\mathcal S}$, we may assume that $F$ is $\mu$-presented for some $\mu \geq w(\mathcal S)^+$. Write $\bigcup_{a \in \mathcal S}F(a)=\{x_\alpha \mid \alpha < \mu\}$. Then, the filtration of $F$, $\{F_\alpha \mid \alpha < \kappa\}$, satisfying that $F_\alpha$ is pure in $F$ and contains $\{x_\gamma \mid \gamma < \alpha\}$ can be constructed using a standard deconstruction procedure (by transfinite recursion), that we sketch here for completeness:
	\begin{itemize}
		\item For $\alpha=0$, take $F_0=0$.
		
		\item If for some $\alpha < \kappa$ we have just constructed $F_\alpha$, consider the quotient $F/F_\alpha$ and apply Theorem \ref{t:Purification} to get a $<w(\mathcal S)^+$-presentable pure submodule $F_{\alpha+1}/F_\alpha$ of $F/F_\alpha$ containing $x_\alpha+F_\alpha$. Since $F_\alpha$ is pure in $F$, $F/F_\alpha$ is flat and so is $F_{\alpha+1}/F_\alpha$. Moreover, $F_{\alpha+1}$ is pure in $F$ and contains $x_\alpha$.
		
		\item If $\alpha$ is a limit ordinal such that we have constructed $F_\gamma$ for every $\gamma<\alpha$, just take $F_\alpha$ to be the union $\bigcup_{\gamma < \alpha}F_\gamma$.
	\end{itemize}
\end{proof}

We finish this section studying the existence of flat preenvelopes in $\Modl{\mathcal S}$.

\begin{lemma}\label{l:CogFlats}
	Let $\kappa>w(\mathcal S)$ be an infinite regular cardinal and $M$ $<\kappa$-presented module. Let $\mathcal H$ be a set of representatives of the isomorphism classes of all $<\kappa$-presented flat modules and $\varphi_M$, the canonical morphism from $M$ to the product $\prod_{H \in \mathcal H}H^{\Hom(M,H)}$. Then:
	\begin{enumerate}
		\item Every morphism  $f:M \rightarrow \prod_{i \in I}H_i$, where $\{H_i\mid i \in I\}$ is a family of flat modules, factors through $\varphi_M$. In other words, $\varphi_M$ is a $\Prod(\Flatl{\mathcal S})$-preenvelope, where $\Prod(\Flatl{\mathcal S})$ denotes the class of all direct products of flat modules.
		
		\item $\Ker \varphi_M = \rej_{\Flatl{\mathcal S}}(M)$.
		
		\item $M \in \Cogen(\Flatl{\mathcal S})$ if and only if $\rej_{\Flatl{\mathcal S}}(M)=0$.
	\end{enumerate}
\end{lemma}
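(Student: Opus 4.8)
The plan is to prove (1) first and then deduce (2) and (3) from it. The substantive point behind (1) is that a morphism out of the ``small'' module $M$ cannot see beyond a \emph{set}-indexed family of flat modules. Precisely: since $M$ is $<\kappa$-presented and $\kappa$ is infinite regular with $\kappa>w(\mathcal S)$, Proposition~\ref{p:KappaPresented}(2) gives $|M|<\kappa$, so for any module $H$ and any $g\colon M\to H$ the image $\Img g$ has cardinality at most $|M|<\kappa$; then Theorem~\ref{t:Purification}(1) furnishes a pure submodule $N$ of $H$ with $\Img g\subseteq N$ and $|N|\leq\max\{w(\mathcal S),|\Img g|\}<\kappa$. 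When $H$ is flat, $N$ is a pure submodule of a flat module, hence flat (as used in the proof of Theorem~\ref{t:Deconstructible}), and by Proposition~\ref{p:KappaPresented}(2) it is $<\kappa$-presented; therefore $N$ is isomorphic to some member of $\mathcal H$.

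Granting this, I would prove (1) componentwise. Let $f\colon M\to\prod_{i\in I}H_i$ with every $H_i$ flat, and fix $i\in I$. Applying the observation above to $g=\pi_i f$, factor $\pi_i f$ as $M\xrightarrow{u_i}N_i\xrightarrow{\iota_i}H_i$ with $N_i$ a $<\kappa$-presented flat pure submodule of $H_i$, and choose an isomorphism $\theta_i\colon N_i\to H^{(i)}$ onto some $H^{(i)}\in\mathcal H$. Then $g_i:=\theta_i u_i\in\Hom(M,H^{(i)})$, so by the definition of $\varphi_M$ as the canonical map into $\prod_{H\in\mathcal H}H^{\Hom(M,H)}$ we have $g_i=p_{(H^{(i)},g_i)}\varphi_M$, where $p_{(H^{(i)},g_i)}$ is the projection onto the relevant factor; hence $\pi_i f=\iota_i\theta_i^{-1}p_{(H^{(i)},g_i)}\varphi_M$. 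Letting $i$ vary, the universal property of $\prod_{i\in I}H_i$ yields a single morphism $\psi\colon\prod_{H\in\mathcal H}H^{\Hom(M,H)}\to\prod_{i\in I}H_i$ with $\pi_i\psi=\iota_i\theta_i^{-1}p_{(H^{(i)},g_i)}$ for all $i$, and then $\psi\varphi_M=f$. Since $\prod_{H\in\mathcal H}H^{\Hom(M,H)}$ is itself a product of flat modules, this says exactly that $\varphi_M$ is a $\Prod(\Flatl{\mathcal S})$-preenvelope.

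Parts (2) and (3) are then short. For (2): $\rej_{\Flatl{\mathcal S}}(M)\subseteq\Ker\varphi_M$ because every factor of $\varphi_M$ has flat target; conversely, applying (1) with $I$ a one-element set, any $f\colon M\to F$ with $F$ flat factors as $f=\psi\varphi_M$, so $\Ker\varphi_M\subseteq\Ker f$, and intersecting over all such $f$ gives $\Ker\varphi_M\subseteq\rej_{\Flatl{\mathcal S}}(M)$. For (3): if $M$ embeds into a product of flat modules, then $\rej_{\Flatl{\mathcal S}}(M)$ is contained in the kernel of each projection of that product, and these intersect to $0$; conversely, if $\rej_{\Flatl{\mathcal S}}(M)=0$ then by (2) the map $\varphi_M$ is a monomorphism of $M$ into a product of flat modules, so $M\in\Cogen(\Flatl{\mathcal S})$. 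The one step that carries real content is the opening observation confining $\Img(\pi_i f)$ to a $<\kappa$-presented flat submodule of the flat module $H_i$; the remainder is bookkeeping with universal properties of products, and I expect no serious obstacle there.
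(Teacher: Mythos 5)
Your argument is correct and follows essentially the same route as the paper's proof: confine $\Img(\pi_i f)$ inside a $<\kappa$-presented flat pure submodule of $H_i$ via Theorem~\ref{t:Purification} and Proposition~\ref{p:KappaPresented}, identify it with a member of $\mathcal H$, and assemble the factorization through $\varphi_M$ by the universal property of the product. You merely spell out the details (the isomorphisms $\theta_i$ and the deductions of (2) and (3)) that the paper leaves implicit.
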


\begin{proof}
	(1) Denote by $p_i:\prod_{i \in I}H_i \rightarrow H_i$ the canonical projection and by $H_M$ the product $\prod_{H \in \mathcal H}H^{\Hom(M,H)}$. Since $\kappa>w(\mathcal S)$, $\Img p_if$ is $<\kappa$-presented by Proposition \ref{p:KappaPresented}, so that there exists, in view of Theorem \ref{t:Purification}, a pure submodule $\overline K_i \leq H_i$ containing $\Img p_if$ and with $|\overline K_i|<\kappa$. Since $H_i$ is flat, $\overline K_i$ is flat as well \cite[49.5]{Wisbauer}. Then, there exists $g_i:H_M \rightarrow H_i$ with $g_i\varphi_M=p_if$. Then the induced morphisms $g:H_M \rightarrow \prod_{i \in I}F_i$ by these $g_i's$ satisfy that $g\varphi_M=f$.
	
	(2) and (3) are straightforward by (1).
\end{proof}

In general, direct products of flat modules are not flat. This condition is satisfied when the category $\Modr{\mathcal S}$ is locally coherent, i. e., every finitely presented right $\mathcal S$-module is coherent \cite[Theorem 4.1]{OberstRohrl}. As a consequence of the preceding result, we obtain the following extension of \cite[Proposition 6.5.2]{EnochsJenda} to functor categories:

\begin{corollary}\label{c:CoherentCategories}
	The module category $\Modr{\mathcal S}$ is locally coherent if and only if every left $\mathcal S$-module has a flat preenvelope.
\end{corollary}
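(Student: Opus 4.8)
The plan is to deduce the statement from the Oberst--Röhrl theorem \cite[Theorem 4.1]{OberstRohrl}, which identifies local coherence of $\Modr{\mathcal S}$ with the closure of $\Flatl{\mathcal S}$ under direct products, and then to play this closure property against the $\Prod(\Flatl{\mathcal S})$-preenvelope constructed in Lemma \ref{l:CogFlats}. So throughout I replace ``$\Modr{\mathcal S}$ is locally coherent'' by ``every direct product of flat left $\mathcal S$-modules is flat'', i.e.\ $\Prod(\Flatl{\mathcal S})=\Flatl{\mathcal S}$.

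For the implication ``$\Modr{\mathcal S}$ locally coherent $\Rightarrow$ flat preenvelopes exist'', I would proceed as follows. Let $M$ be an arbitrary module and choose an infinite regular cardinal $\kappa>w(\mathcal S)$ with $|M|<\kappa$ (for instance the successor of $\max\{w(\mathcal S),|M|,\aleph_0\}$, which is regular). By Proposition \ref{p:KappaPresented}(2), $M$ is $<\kappa$-presented, so Lemma \ref{l:CogFlats}(1) applies and $\varphi_M\colon M\to\prod_{H\in\mathcal H}H^{\Hom(M,H)}$ is a $\Prod(\Flatl{\mathcal S})$-preenvelope. Since $\Prod(\Flatl{\mathcal S})=\Flatl{\mathcal S}$ in the locally coherent case, the codomain of $\varphi_M$ is flat; and since $\Flatl{\mathcal S}\subseteq\Prod(\Flatl{\mathcal S})$ trivially, every morphism from $M$ to a flat module factors through $\varphi_M$. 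Hence $\varphi_M$ is a flat preenvelope of $M$.

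For the converse, assume every left $\mathcal S$-module has a flat preenvelope and let $\{F_i\mid i\in I\}$ be a family of flat modules; I want $\prod_{i\in I}F_i$ flat, which by \cite[Theorem 4.1]{OberstRohrl} yields local coherence of $\Modr{\mathcal S}$. Take a flat preenvelope $f\colon\prod_{i\in I}F_i\to F$. As each $F_i$ is flat, the projection $\pi_i\colon\prod_{i\in I}F_i\to F_i$ factors as $\pi_i=g_if$ for some $g_i\colon F\to F_i$; the $g_i$'s induce $g\colon F\to\prod_{i\in I}F_i$ with $\pi_i g=g_i$, whence $\pi_i gf=\pi_i$ for all $i$ and therefore $gf=\mathrm{id}$. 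Thus $f$ is a split monomorphism, so $\prod_{i\in I}F_i$ is isomorphic to a direct summand of the flat module $F$, and since $\Flatl{\mathcal S}$ is closed under direct summands \cite[49.5]{Wisbauer} it is flat.

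The argument is essentially formal once Lemma \ref{l:CogFlats} is available. The only step requiring care is the first implication, where one must verify that the preenvelope into a \emph{product} of flats supplied by Lemma \ref{l:CogFlats} is a genuine preenvelope into $\Flatl{\mathcal S}$; this is precisely where closure of $\Flatl{\mathcal S}$ under products is used, and it is also exactly the point at which the statement fails without the coherence hypothesis, since in general $\varphi_M$ need not have a flat codomain.
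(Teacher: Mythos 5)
Your proof is correct and follows essentially the same route as the paper: the forward implication uses the $\Prod(\Flatl{\mathcal S})$-preenvelope $\varphi_M$ of Lemma \ref{l:CogFlats} together with the Oberst--R\"ohrl identification of local coherence with closure of $\Flatl{\mathcal S}$ under products, and the converse is the standard splitting argument showing that a preenveloping class closed under direct summands is closed under direct products. You merely spell out the cardinality bookkeeping and the splitting in more detail than the paper does.
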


\begin{proof}
	If $\Modr{\mathcal S}$ is locally coherent, the morphism $\varphi_M$ defined in Lemma \ref{l:CogFlats} is a flat preenvelope of $M$ for any module $M$. The converse follows from the fact that any preenveloping class closed under direct summands is closed under direct products as well.
\end{proof}

The flat dimension of modules and the weak dimension of $\Modl{\mathcal S}$ can be defined as in the case of rings, see \cite[Section 50]{Wisbauer}.


\section{Torsion theories in functor categories} \label{s:TorsionTheories}

A \textit{torsion theory} in $\Modl{\mathcal S}$ is a pair of classes of modules, $\sigma=(\mathcal{T}_\sigma,\mathcal{F}_\sigma)$, such that $T \in \mathcal T_\sigma$ if and only if $\Hom(T,X)=0$ for each $X \in \mathcal F_\sigma$, and $F \in \mathcal F_\sigma$ if and only if $\Hom(X,F) =0$ for each $X \in \mathcal T_\sigma$. The elements in $\mathcal T_\sigma$ are called \textit{$\sigma$-torsion}, the elements of $\mathcal F_\sigma$, \textit{$\sigma$-torsion-free}, and we denote the corresponding idempotent radical (i. e., the trace with respect to $\mathcal T_\sigma$) by $\sigma$ as well. For undefined notions and results about torsion theories we refer to \cite[Chapter VI]{Stenstrom}, \cite[Section 11]{Prest} and \cite{Golan}.

A submodule $K$ of a module $M$ is \textit{$\sigma$-dense} if $M/K$ is $\sigma$-torsion. We denote by $\mathcal U_\sigma(M)$ the set of all $\sigma$-dense submodules of $M$. If the torsion theory is hereditary, i. e., $\mathcal T$ is closed under submodules (equivalently, $\mathcal F$ is closed under injective hulls \cite[Proposition VI.3.2]{Stenstrom}), then the set of $\sigma$-dense submodules of any module are determined by the family $\{\mathcal U_\sigma(H_a) \mid a \in \mathcal S\}$:

\begin{proposition}\label{p:DenseInducedByRepresentables}
Let $\sigma$ be a torsion theory in $\Modl{\mathcal S}$. Consider, for any module $M$, the family of submodules
\begin{displaymath}
\mathcal U'(M)=\{K \leq M \mid \forall a \in \mathcal S,\forall x \in M(a),(x:K) \in \mathcal U_\sigma(H_a)\}.
\end{displaymath}
Then:
\begin{enumerate}
	\item $\mathcal U'(M) \subseteq \mathcal U_\sigma(M)$ for every module $M$.
	
	\item $\sigma$ is hereditary if and only if $\mathcal U'(M) = \mathcal U_\sigma(M)$ for every module $M$.
\end{enumerate}
  
\end{proposition}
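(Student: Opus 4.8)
The plan is to prove both inclusions, with the forcing direction being routine and the reverse direction (that $\mathcal U'(M) = \mathcal U_\sigma(M)$ for all $M$ characterizes heredity) requiring the real work.

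First I would establish part (1), that $\mathcal U'(M) \subseteq \mathcal U_\sigma(M)$ for every module $M$. So suppose $K \in \mathcal U'(M)$, i.e., $(x:K) \in \mathcal U_\sigma(H_a)$ for every $a$-element $x$ of $M$. I want to show $M/K$ is $\sigma$-torsion. Since $\mathcal T_\sigma$ is closed under quotients, direct sums, and extensions (being the torsion class of a torsion theory), and since $M/K = \langle\, \{x + K \mid x \in M\}\,\rangle$ is generated by its elements, it suffices to show each cyclic submodule $\langle x + K\rangle$ of $M/K$ is $\sigma$-torsion. But as recalled in the excerpt just before the proposition, $H_a/(x:K) \cong \langle x + K\rangle$, and $H_a/(x:K)$ is $\sigma$-torsion precisely because $(x:K) \in \mathcal U_\sigma(H_a)$. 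Then $M/K$ is a sum of $\sigma$-torsion submodules, hence $\sigma$-torsion (the torsion radical $\sigma(M/K)$ contains every $\langle x+K \rangle$, so equals $M/K$). This gives $K \in \mathcal U_\sigma(M)$.

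For part (2), the ``only if'' direction: assume $\sigma$ is hereditary. Given $K \in \mathcal U_\sigma(M)$, I must show $(x:K) \in \mathcal U_\sigma(H_a)$ for every $a$-element $x$. Now $H_a/(x:K) \cong \langle x+K\rangle \leq M/K$, and $M/K$ is $\sigma$-torsion; since heredity means $\mathcal T_\sigma$ is closed under submodules, $\langle x+K\rangle$ is $\sigma$-torsion, so $(x:K)$ is $\sigma$-dense in $H_a$. Combined with part (1), this yields $\mathcal U'(M) = \mathcal U_\sigma(M)$. For the ``if'' direction, assume $\mathcal U'(M) = \mathcal U_\sigma(M)$ for all $M$, and let $N \leq T$ with $T \in \mathcal T_\sigma$; I want $N \in \mathcal T_\sigma$. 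Consider the module $T$: we have $0 \in \mathcal U_\sigma(T)$ (as $T/0 = T$ is torsion), so by hypothesis $0 \in \mathcal U'(T)$, meaning $(x:0) \in \mathcal U_\sigma(H_a)$ for every $a$-element $x$ of $T$; but $(x:0) = \Ker m^x$ and $H_a/(x:0) \cong \langle x\rangle$, so every cyclic submodule $\langle x\rangle$ of $T$ is $\sigma$-torsion. Applying this to the $a$-elements $x$ lying in $N$, each $\langle x\rangle \leq N$ is $\sigma$-torsion, and $N = \sum_{x \in N}\langle x\rangle$ is a sum of $\sigma$-torsion submodules, hence $\sigma$-torsion. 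Thus $\mathcal T_\sigma$ is closed under submodules and $\sigma$ is hereditary.

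The main obstacle, such as it is, lies in the bookkeeping of ``elements'' and cyclic submodules in the functor category $\Modl{\mathcal S}$: one must be careful that the isomorphism $H_a/(x:K) \cong \langle x+K\rangle$ and the identity $M/K = \sum_{x}\langle x+K\rangle$ (a sum over all $a$ and all $a$-elements) are used correctly, and that the torsion class is genuinely closed under arbitrary sums of submodules — this follows since $\mathcal T_\sigma = \Gen(\mathcal T_\sigma) \cap (\text{closed under extensions})$ and a sum of submodules is a quotient of their direct sum. Everything else is a direct transcription of the classical module-theoretic argument (cf.\ \cite[Chapter VI]{Stenstrom}, \cite[Section 11]{Prest}) into the present framework, using the description of submodules via families of subgroups and the morphisms $m^x:H_a \rightarrow M$ recalled above.
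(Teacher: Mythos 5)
Your proof is correct and follows essentially the same route as the paper's: both directions hinge on the isomorphism $H_a/(x:K) \cong \langle x+K\rangle$, part (1) is obtained by writing $M/K$ as a quotient of a direct sum of the torsion modules $H_a/(x:K)$ (your "sum of torsion submodules" phrasing is the same fact), and the converse in (2) applies the hypothesis to $0 \in \mathcal U_\sigma(T)$ to see that every cyclic submodule of a torsion module is torsion. The only difference is that you spell out explicitly that an arbitrary submodule $N$ is the sum of its cyclic submodules, a step the paper leaves implicit.
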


\begin{proof}
(1) Let $K \leq M$. If $x$ is an $a$-element of $M$, $m_{x+K}:H_a \rightarrow M/K$ is a morphism with image $\langle x+K\rangle$ and kernel $(x:K)$, where $x+K$ is the $a$-element $x+K(a)$ of $M/K$. Then, if $(x:K) \in \mathcal U_\sigma(H_a)$, $\langle x+K\rangle  \in \mathcal T_\sigma$. Since this happens for every element $x+K$ in $M/K$, there is an epimorphism $\bigoplus_{a \in \mathcal S}\bigoplus_{x \in M(a)}F_{a,x}\rightarrow M/K$, where $F_{a,x}=H_a/(x:K)$ for every $a \in \mathcal S$ and $x \in M$. Then, $M/K \in \mathcal T_\sigma$ by \cite[Proposition VI.2.1]{Stenstrom}

(2) If $\sigma$ is hereditary and $M/K \in \mathcal T_\sigma$, then $H_a/(x:K)\cong \langle x+K\rangle \in \mathcal T_\sigma$ and $(x:K) \in \mathcal U_\sigma(H_a)$, so that $\mathcal U'(M)$ and $\mathcal U_\sigma(M)$ are equal in this case. Conversely, if $\mathcal U'(M)=\mathcal U_\sigma(M)$ for every module $M$ and $M$ is a $\sigma$-torsion module, then $0 \in \mathcal U'(M)$, so that $(x:0) \in \mathcal U_{\sigma}(H_a)$ for every $a$-element $x$ of $M$. Then, $\langle x \rangle\cong H_a/(x:0) \in \mathcal T_\sigma$ which means that $\sigma$ is hereditary.
\end{proof}

If $\sigma$ is a hereditary torsion theory in the category of modules over the ring $R$, the preceding proposition says that $\sigma$ is determined by the family of left ideals $\mathcal U_\sigma(R)$, the \textit{Gabriel topology associated to $\sigma$} \cite[Theorem VI.5.1]{Stenstrom}. Extending this idea, a \textit{Gabriel topology} on $\mathcal S$ is a family of sets of submodules of the representable functors, $ \mathcal V=\{\mathcal V_a \mid a \in \mathcal S\}$ where $\mathcal V_a \subseteq H_a$, that satisfies, for every $a \in \mathcal S$, the following conditions:
\begin{enumerate}
\item[(T1)] If $U \in \mathcal V_a$, $L \leq H_a$ and $U \leq L$ then $L \in \mathcal V_a$.

\item[(T2)] If $U,V \in \mathcal V_a$, the $U \cap V \in \mathcal V_a$.

\item[(T3)] If $U \in \mathcal V_a$, $b \in \mathcal S$ and $x$ is a $b$-element of $H_a$, then $(x:U)\in \mathcal V_b$.

\item[(T4)] If $L \leq H_a$ satisfies that there exists $U \in \mathcal V_a$ such that for any $b \in \mathcal S$ and $b$-element $x \in U$, $(x:L) \in \mathcal V_b$, then $L \in \mathcal V_a$.
\end{enumerate}

The relationship between Gabriel topologies and hereditary torsion theories is the same as in the case of rings \cite[Theorem VI.5.1]{Stenstrom}:

\begin{proposition}\label{p:TopologiesAndTorsion}
There are mutual inverse bijective maps between the sets of all Gabriel topologies on $\mathcal S$ and of hereditary torsion theories of $\Modl{\mathcal S}$ given by:
\begin{itemize}
\item If $\sigma$ is a hereditary torsion theory of $\Modl{\mathcal S}$, the associated Gabriel topology is $\mathcal U_\sigma=\{\mathcal U_\sigma(H_a) \mid a \in \mathcal S\}$.

\item If $\mathcal V = \{\mathcal V_a \mid a \in \mathcal S\}$ is a Gabriel topology on $\mathcal S$, the associated hereditary torsion theory is the one whose class of torsion modules is
\begin{displaymath}
\mathcal T=\{X \in \Modl{\mathcal S} \mid \forall a \in \mathcal S, \forall x \in X(a), (x:0) \in \mathcal V_a\}.
\end{displaymath}
\end{itemize}
\end{proposition}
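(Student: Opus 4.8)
The plan is to follow the classical correspondence for module categories, \cite[Theorem VI.5.1]{Stenstrom}, transcribing every step to $\Modl{\mathcal S}$ by means of the ``local'' description of submodules and the isomorphism $H_a/(x:K)\cong\langle x+K\rangle$ recorded in Section~\ref{sec:preliminaries}. Three things must be checked: that $\mathcal U_\sigma=\{\mathcal U_\sigma(H_a)\mid a\in\mathcal S\}$ satisfies (T1)--(T4) whenever $\sigma$ is a hereditary torsion theory; that, for a Gabriel topology $\mathcal V$, the class $\mathcal T$ displayed in the statement is the torsion class of a hereditary torsion theory; and that these two assignments are mutually inverse.

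For the first point I would verify the four axioms directly. (T1) is closure of $\mathcal T_\sigma$ under quotients; (T2) follows from closure under finite products together with heredity, since $H_a/(U\cap V)$ embeds into $H_a/U\oplus H_a/V$; and (T3) holds because $H_b/(x:U)\cong\langle x+U\rangle$ is a submodule of the torsion module $H_a/U$, hence torsion by heredity. The only delicate axiom is (T4): given $L\le H_a$ and $U\in\mathcal U_\sigma(H_a)$ such that $(x:L)\in\mathcal U_\sigma(H_b)$ for every $b$-element $x\in U$, note that $H_a/(U+L)$ is a quotient of $H_a/U$, hence torsion, so it suffices to see that $(U+L)/L\cong U/(U\cap L)$ is torsion; but the latter is generated by the elements $x+(U\cap L)$ with $x$ a $b$-element of $U$, and $\langle x+(U\cap L)\rangle\cong H_b/(x:U\cap L)=H_b/(x:L)$ (the last equality because $x\in U$ and $U$ is a submodule, so $H_a(r)(x)\in U$ automatically), which is torsion by hypothesis; thus $H_a/L$ is an extension of torsion modules and therefore torsion.

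For the second point, I would show that $\mathcal T$ is closed under submodules, quotients, coproducts and extensions, which makes it the torsion class of a (necessarily hereditary) torsion theory whose torsion-free class is $\{Y\mid\Hom(X,Y)=0\text{ for all }X\in\mathcal T\}$. Closure under submodules is immediate from the local definition, closure under coproducts reduces to the finite case (elements of a coproduct have finite support) and then uses (T1) and (T2), and closure under quotients uses (T1) via the inclusion $(x:0)\le(x+K:0)=(x:K)$. The substantive point, and the place where (T4) is used, is closure under extensions: for $0\to A\to B\to C\to 0$ with $A,C\in\mathcal T$ and an $a$-element $x\in B(a)$, the submodule $U:=(x:A)\le H_a$ lies in $\mathcal V_a$ because $B/A\cong C\in\mathcal T$, and for every $b$-element $r\in U$ one has $B(r)(x)\in A(b)$ and $(r:(x:0))=(B(r)(x):0_A)\in\mathcal V_b$ since $A\in\mathcal T$; applying (T4) to $(x:0)\le H_a$ together with this $U$ gives $(x:0)\in\mathcal V_a$, so $B\in\mathcal T$.

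Finally, for the two assignments being mutually inverse: starting from a hereditary $\sigma$, a module $X$ lies in the torsion class attached to $\mathcal U_\sigma$ precisely when $0\in\mathcal U'(X)$ in the notation of Proposition~\ref{p:DenseInducedByRepresentables}, which by part~(2) of that proposition equals $\mathcal U_\sigma(X)$, i.e.\ precisely when $X\in\mathcal T_\sigma$. Starting from a Gabriel topology $\mathcal V$ with associated torsion theory $\sigma$, I would prove $\mathcal U_\sigma(H_a)=\mathcal V_a$: the inclusion $\mathcal V_a\subseteq\mathcal U_\sigma(H_a)$ is (T3), which gives $(y+U:0)=(y:U)\in\mathcal V_b$ for every $b$-element $y$ of $H_a$ and hence $H_a/U\in\mathcal T$; the reverse inclusion uses $H_a=\langle 1_a\rangle$, since $H_a/U\in\mathcal T$ forces $(1_a+U:0)\in\mathcal V_a$, and $(1_a+U:0)=(1_a:U)=U$ because $H_a(r)(1_a)=r$. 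The only steps that require genuine care are the verification of (T4) for $\mathcal U_\sigma$ and, dually, its use in proving closure under extensions; the rest is a routine translation of the ring-theoretic argument, the local descriptions from Section~\ref{sec:preliminaries} handling all the bookkeeping.
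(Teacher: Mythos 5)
Your proposal is correct and follows essentially the same route as the paper: verifying (T1)--(T4) for $\mathcal U_\sigma$ (with the same extension argument for (T4) via $(U+L)/L$ and the quotient $H_a/(U+L)$ of $H_a/U$), and checking that $\mathcal T$ is closed under submodules, quotients, coproducts and extensions using the identity $(x+K:0)=(x:K)$ and axiom (T4), all resting on Proposition~\ref{p:DenseInducedByRepresentables} and the isomorphism $H_a/(x:K)\cong\langle x+K\rangle$. You additionally spell out the mutual-inverse verification (via $\mathcal U'(X)=\mathcal U_\sigma(X)$ and $(1_a:U)=U$), which the paper leaves implicit; that part is also correct.
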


\begin{proof}
The proof follows the lines of \cite[Theorem VI.5.1]{Stenstrom}.

First, let $\sigma$ be a hereditary torsion theory and let us prove that $\mathcal U_\sigma$ is a Gabriel topology. Clearly, $\mathcal U_\sigma$ satisfies (T1). For (T2), given $a \in \mathcal S$, just notice that $H_a/(U \cap V)$ is isomorphic to a submodule of $(H_a/U) \oplus (H_a/V)$ for each $U,V \in H_a$. The condition (T3) follows from Proposition \ref{p:DenseInducedByRepresentables}. 

Finally, take $L \leq H_a$ and $U \in \mathcal U_\sigma(H_a)$ satisfying (T4). Then, reasoning as in the proof of Proposition \ref{p:DenseInducedByRepresentables}, we conclude that $(U+L)/L$ is $\sigma$-torsion. Then we can consider the short exact sequence,
\begin{displaymath}
0 \rightarrow (U+L)/L \rightarrow H_a/L \rightarrow H_a/(U+L) \rightarrow 0,
\end{displaymath}
where the modules in the left and in the right are $\sigma$-torsion (notice that the module $H_a/(U+L)$ is a quotient of $H_a/U$, which is $\sigma$-torsion). Then $H_a/L$ is $\sigma$-torsion and $L \in \mathcal U_\sigma(H_a)$.

Conversely, let us see that $\mathcal T$ is a hereditary torsion class. Clearly, $\mathcal T$ is closed under submodules; it is closed under quotients by (T1) and under direct sums by (T2). Let us see that $\mathcal T$ is closed under extensions. Let $K$ be a submodule of $M$ such that $K$ and $M/K$ belong to $\mathcal T$. Fix $a \in \mathcal S$ and $x \in M(a)$. Then, $(x+K:0) = (x:K)$ belongs to $\mathcal V_a$. Moreover, for every $b \in \mathcal S$ and $b$-element $y \in (x:K)$, $(y:(x:0)) = (M(y)(x):0)$ and, since $M(y)(x) \in K$, $(M(y)(x):0) \in \mathcal V_b$. By (T4), $(x:0) \in \mathcal V_a$ and $M \in \mathcal T$.
\end{proof}

The following easy fact will be used later:

\begin{lemma}\label{l:ProductIsDense}
	Let $\sigma$ be a hereditary torsion theory and, for every $a \in \mathcal S$, let $I_a \in \mathcal U_\sigma(H_a)$. Then, for every module $M$, $\sum_{a \in \mathcal S}I_a\cdot M(a) \in \mathcal U_\sigma (M)$.
\end{lemma}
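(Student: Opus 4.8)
The plan is to reduce the statement to the local characterization of $\sigma$-dense submodules provided by Proposition \ref{p:DenseInducedByRepresentables}. Write $K=\sum_{a\in\mathcal S}I_a\cdot M(a)$, which is a submodule of $M$, being a sum of submodules. By Proposition \ref{p:DenseInducedByRepresentables}(1) we have $\mathcal U'(M)\subseteq\mathcal U_\sigma(M)$, so it suffices to check that $K\in\mathcal U'(M)$; that is, that $(x:K)\in\mathcal U_\sigma(H_b)$ for every $b\in\mathcal S$ and every $b$-element $x$ of $M$.

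Fix such a $b$ and $x\in M(b)$. The key observation — and the only point that requires any computation — is that $I_b\leq (x:K)$ as submodules of $H_b$. Indeed, given $c\in\mathcal S$ and $r\in I_b(c)$, the element $M(r)(x)$ is, by the very definition of the submodule $I_b\cdot M(b)$, a one-term sum lying in $\bigl(I_b\cdot M(b)\bigr)(c)$ (take $r_1=r$ and $x_1=x$ in the defining expression), hence it lies in $K(c)$; therefore $r\in (x:K)(c)$. Since $c$ was arbitrary, $I_b\leq (x:K)$.

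Once this inclusion is established, the conclusion is immediate: $H_b/(x:K)$ is a quotient of $H_b/I_b$, and $H_b/I_b$ is $\sigma$-torsion because $I_b\in\mathcal U_\sigma(H_b)$; since a torsion class is closed under quotients, $H_b/(x:K)$ is $\sigma$-torsion, i.e. $(x:K)\in\mathcal U_\sigma(H_b)$. As $b$ and the $b$-element $x$ were arbitrary, $K\in\mathcal U'(M)\subseteq\mathcal U_\sigma(M)$, which is exactly the claim.

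I do not expect a genuine obstacle here; the argument is essentially the single observation that each $(x:K)$ contains the already $\sigma$-dense submodule $I_b$. The hereditarity hypothesis is only present to place us in the regime where $\sigma$-density is controlled by the representables — and in fact Proposition \ref{p:DenseInducedByRepresentables}(1), which is all we use, does not even need it. The only mild care needed is to unwind the definition of $I_b\cdot M(b)$ correctly, so that $M(r)(x)$ is recognized as one of its elements.
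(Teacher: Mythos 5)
Your proof is correct and is essentially identical to the paper's: both establish that $(x:K)$ contains the $\sigma$-dense submodule $I_b$ for each $b$-element $x$ of $M$, and then invoke Proposition \ref{p:DenseInducedByRepresentables} to conclude. You simply spell out the inclusion $I_b\leq (x:K)$ in more detail, and your side remark that only part (1) of that proposition (hence not hereditarity) is needed is accurate.
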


\begin{proof}
	Just notice that, for any $b \in \mathcal S$ and $x \in M(b)$, the submodule $(x:\sum_{a \in \mathcal S}I_a\cdot M(a))$ belongs to $\mathcal U_\sigma(H_b)$, because it contains $I_b$. Then, $\sum_{a \in \mathcal S}I_a\cdot M(a) \in \mathcal U_\sigma(M)$ by Proposition \ref{p:DenseInducedByRepresentables}.
\end{proof}

A module $M$ is $\sigma$-torsion if and only if every submodule of $M$ is $\sigma$-dense. At the other extreme, they are those modules having no proper $\sigma$-dense submodules: the \textit{$\sigma$-cotorsion-free modules} \cite[Chapter 7]{Golan}. The following result states the main properties of $\sigma$-cotorsion-free modules:

\begin{proposition}\label{p:PropertiesCotorsionFree}
	The class of all $\sigma$-cotorsion-free modules is closed under quotients and extensions. If $f:M \rightarrow N$ is a superfluous epimorphism with $N$ a $\sigma$-cotorsion-free module, then $M$ is $\sigma$-cotorsion free. If $\sigma$ is hereditary, the class of $\sigma$-cotorsion-free modules is closed under direct sums.
\end{proposition}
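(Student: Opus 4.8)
The plan is to treat the four assertions in turn; all of them reduce to the single principle that the relevant module operations carry $\sigma$-dense submodules to $\sigma$-dense submodules, combined with the defining property that a $\sigma$-cotorsion-free module has no proper $\sigma$-dense submodule. For closure under quotients, let $M$ be $\sigma$-cotorsion-free and $L \leq M$; if $K/L$ is a $\sigma$-dense submodule of $M/L$, then $M/K \cong (M/L)/(K/L)$ is $\sigma$-torsion, so $K$ is $\sigma$-dense in $M$, whence $K = M$ and $K/L = M/L$.

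For closure under extensions, take a short exact sequence $0 \to A \to M \to C \to 0$ with $A$ and $C$ both $\sigma$-cotorsion-free and let $K \leq M$ be $\sigma$-dense. Identifying $C$ with $M/A$, the submodule $(K+A)/A$ of $C$ has quotient $M/(K+A)$, which is a quotient of the $\sigma$-torsion module $M/K$ and hence $\sigma$-torsion; since $C$ is $\sigma$-cotorsion-free this forces $K+A = M$. Intersecting with $A$, we get $A/(K\cap A) \cong (K+A)/K = M/K$, which is $\sigma$-torsion, so $K \cap A$ is $\sigma$-dense in $A$ and therefore $K \cap A = A$, i.e.\ $A \leq K$; together with $K+A = M$ this yields $K = M$. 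The argument for a superfluous epimorphism $f\colon M \to N$ with $N$ $\sigma$-cotorsion-free is the same push-forward: if $K \leq M$ is $\sigma$-dense, then $N/f(K) \cong M/(K + \Ker f)$ is a quotient of $M/K$, hence $\sigma$-torsion, so $f(K) = N$, i.e.\ $K + \Ker f = M$; superfluousness of $\Ker f$ then gives $K = M$.

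Finally, assuming $\sigma$ hereditary, let $M = \bigoplus_{i \in I}M_i$ with each $M_i$ $\sigma$-cotorsion-free and let $K \leq M$ be $\sigma$-dense. For each $i$, the module $M_i/(K \cap M_i) \cong (M_i + K)/K$ is a submodule of the $\sigma$-torsion module $M/K$, so it is $\sigma$-torsion by hereditariness; thus $K \cap M_i = M_i$, i.e.\ $M_i \leq K$ for every $i$, and hence $K = M$. This last step is the only place where more than formal bookkeeping is needed: hereditariness is precisely what lets us pass from $M/K$ being torsion to its submodule $(M_i+K)/K$ being torsion, and without it the closure under direct sums may fail. I expect no real obstacle; once the ``$\sigma$-dense submodules transport along these operations'' principle is in place, the proof is elementary diagram chasing.
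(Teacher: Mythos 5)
Your proof is correct. The paper itself does not write out an argument here --- it simply notes that the proof of Golan's Proposition 7.2 carries over to functor categories and to not-necessarily-hereditary torsion theories --- and what you have written is exactly that standard verification made explicit: transport a $\sigma$-dense submodule along the quotient, the extension, the superfluous epimorphism, or (using hereditariness to pass to submodules of torsion modules) the direct sum, and invoke the absence of proper $\sigma$-dense submodules. Each of the four steps checks out, including the correct identification of where hereditariness is genuinely needed.
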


\begin{proof}
	The proof of \cite[Proposition 7.2]{Golan} works for not necessarily hereditary torsion theories over our context of functor categories.
\end{proof}

If $\sigma$ is a hereditary torsion theory, then the class of all $\sigma$-cotorsion-free modules is a torsion class and, in particular, every module $M$ has a largest $\sigma$-cotorsion-free submodule $C_\sigma(M)$ (see \cite[Proposition 7.7]{Golan}). Clearly, $C_\sigma(M)$ is contained in every $\sigma$-dense submodule of $M$, so that $C_\sigma(M)\leq L_\sigma(M)$, where $L_\sigma(M)$ is the intersection of all $\sigma$-dense submodules of $M$. The following lemma, which will be used in the last theorem of this section, states when $C_\sigma(M)$ and $L_\sigma(M)$ are equal:

\begin{lemma}\label{l:L=C}
	The following assertions about the hereditary torsion theory $\sigma$ and the module $M$ are equivalent:
	\begin{enumerate}
		\item $L_\sigma(M)$ is $\sigma$-cotorsion-free, i. e., $C_\sigma(M)=L_\sigma(M)$.
		
		\item $L_\sigma(M)$ is dense in $M$.
	\end{enumerate}
\end{lemma}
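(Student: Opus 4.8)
The plan is to prove the two implications separately; throughout write $L:=L_\sigma(M)$.

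\textbf{$(2)\Rightarrow(1)$.} This direction I would do directly. Assume $M/L\in\mathcal T_\sigma$ and let $K$ be an arbitrary $\sigma$-dense submodule of $L$, so that $L/K\in\mathcal T_\sigma$. Since $K\le L\le M$, there is a short exact sequence
\[
0\la L/K\la M/K\la M/L\la 0,
\]
and because $\mathcal T_\sigma$ is closed under extensions we obtain $M/K\in\mathcal T_\sigma$, i.e.\ $K\in\mathcal U_\sigma(M)$. Hence $L=L_\sigma(M)\le K$, forcing $K=L$. Thus $L$ has no proper $\sigma$-dense submodule, i.e.\ $L$ is $\sigma$-cotorsion-free; combined with the already-noted inclusion $C_\sigma(M)\le L$ and the maximality of $C_\sigma(M)$ among $\sigma$-cotorsion-free submodules of $M$, this yields $C_\sigma(M)=L$.

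\textbf{$(1)\Rightarrow(2)$.} Now assume $L$ is $\sigma$-cotorsion-free; the goal is $M/L\in\mathcal T_\sigma$. Since $L$ is by definition the intersection of all $\sigma$-dense submodules of $M$, every $\sigma$-dense submodule of $M$ contains $L$, so $K\mapsto K/L$ identifies $\mathcal U_\sigma(M)$ with $\mathcal U_\sigma(M/L)$ and in particular $L_\sigma(M/L)=0$. As $M/L$ is the sum of its cyclic submodules $\langle x+L\rangle$ with $a\in\mathcal S$ and $x\in M(a)$, and $\mathcal T_\sigma$ is closed under quotients and direct sums, it suffices to show each $\langle x+L\rangle$ is $\sigma$-torsion; via the isomorphism $H_a/(x:L)\cong\langle x+L\rangle$ this amounts to proving $(x:L)\in\mathcal U_\sigma(H_a)$ for every $a$-element $x$ of $M$. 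Here I would use that $(x:L)=\bigcap_{K\in\mathcal U_\sigma(M)}(x:K)$ is an intersection of $\sigma$-dense submodules of $H_a$ (each $(x:K)$ being $\sigma$-dense by Proposition~\ref{p:DenseInducedByRepresentables}), and then exploit the hypothesis that $L=C_\sigma(M)$ is $\sigma$-cotorsion-free, together with the closure properties of the class of $\sigma$-cotorsion-free modules (Proposition~\ref{p:PropertiesCotorsionFree}) and the torsion-class structure underlying $C_\sigma$, to conclude that this intersection is again $\sigma$-dense.

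The step I expect to be the main obstacle is precisely this last point of $(1)\Rightarrow(2)$: passing from "the intersection $L$ of the $\sigma$-dense submodules of $M$ happens to be $\sigma$-cotorsion-free" to "$L$ is itself $\sigma$-dense", i.e.\ showing that cotorsion-freeness of $L$ prevents the defining intersection from losing density. I do not expect this to be a purely formal manipulation of the Gabriel topology $\mathcal U_\sigma$; rather it should require genuinely using that $C_\sigma$ is the radical of the torsion class of $\sigma$-cotorsion-free modules and relating $C_\sigma(H_a)$, $L_\sigma(H_a)$ and the modules $(x:L)$ at the level of the representables, after which Proposition~\ref{p:DenseInducedByRepresentables} repackages the conclusion for $M$.
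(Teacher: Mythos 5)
Your proof of $(2)\Rightarrow(1)$ is correct and is exactly the paper's argument: from the exact sequence $0\rightarrow L/K\rightarrow M/K\rightarrow M/L\rightarrow 0$ and closure of $\mathcal T_\sigma$ under extensions, any $\sigma$-dense submodule $K$ of $L:=L_\sigma(M)$ is $\sigma$-dense in $M$, hence contains and therefore equals $L$.

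The direction $(1)\Rightarrow(2)$ is a genuine gap in your proposal: you reduce it to showing that $(x:L)=\bigcap_{K\in\mathcal U_\sigma(M)}(x:K)$ is $\sigma$-dense in $H_a$ and then say you would ``exploit'' the cotorsion-freeness of $L$ ``to conclude,'' without supplying the argument. Your instinct that this is the real obstacle is sound, but no argument of this kind can succeed, because the implication fails as stated. Take $\mathcal S$ with one object and endomorphism ring $\mathbb Z$, so $\Modl{\mathcal S}=\Modl{\mathbb Z}$, let $\sigma$ be the classical torsion theory (which is the theory $\tau$ cogenerated by the flat $=$ torsion-free groups) and $M=\mathbb Z$. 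Then $\mathcal U_\sigma(\mathbb Z)=\{n\mathbb Z\mid n\geq 1\}$, so $L_\sigma(\mathbb Z)=0$; and $C_\sigma(\mathbb Z)=0$ too, since every nonzero subgroup of $\mathbb Z$ is isomorphic to $\mathbb Z$ and has $2\mathbb Z$ as a proper $\sigma$-dense submodule, while the zero module is vacuously $\sigma$-cotorsion-free (as it must be if the $\sigma$-cotorsion-free modules are to form a torsion class). Thus $(1)$ holds while $L_\sigma(\mathbb Z)=0$ is not $\sigma$-dense in $\mathbb Z$. For what it is worth, the paper's own one-sentence proof only establishes $(2)\Rightarrow(1)$: the asserted chain ``$L$ dense in $M$ iff $K$ dense in $M$ iff $K=L$'' is circular when read toward $(2)$, because ``$K=L$ implies $K$ dense in $M$'' already presupposes $(2)$. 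The direction used essentially elsewhere (for instance in Proposition \ref{p:CharacterizationJansian}) is $(2)\Rightarrow(1)$, which you do prove correctly.
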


\begin{proof}
	Given a $\sigma$-dense submodule $K$ of $L_\sigma(M)$, the short exact sequence
	\begin{displaymath}
		\begin{tikzcd}
			0 \arrow{r} & \frac{L_\sigma(M)}{K} \arrow{r} & \frac{M}{K} \arrow{r} & \frac{M}{L_\sigma(M)} \arrow{r} & 0
		\end{tikzcd}
	\end{displaymath}
	says that $L_\sigma(M)$ is $\sigma$-dense in $M$ if and only if $K$ is $\sigma$-dense in $M$ if and only if $K \leq L_\sigma(M)$.
\end{proof}

Now we discuss some facts about jansian torsion theories. Recall that the torsion theory $\sigma$ is \textit{jansian} (or a \textit{TTF-theory} in which case $\mathcal T_\sigma$ is called a \textit{TTF-class}) if the class of all $\sigma$-torsion modules is closed under direct products and submodules. This means \cite[Proposition VI.2.2]{Stenstrom} that there exists a class $\mathcal C_\sigma$ such that $(\mathcal C_\sigma, \mathcal T_\sigma)$ is a torsion theory. Jansian torsion theories can be characterized in terms of pseudoprojective modules. Recall that a module $P$ is \textit{pseudoprojective} if for every epimorphism $f:M \rightarrow N$ and non-zero morphism $g:P \rightarrow N$, there exist an endomorphism of $P$, $f':P \rightarrow P$ and $g':P \rightarrow M$ such that $0 \neq fg' = f''g$. The following characterization of pseudoprojective modules is an extension of \cite[3.9]{Wisbauer96} to our setting. This result is crucial to prove that when a finitely accessible additive category has enough flats, then the class of flat objects is closed under pure subobjects, Corollary \ref{c:EnoughFlatsImpliesPureSubmodules}.

\begin{proposition}\label{p:Pseudoprojective}
	The following are equivalent for a module $M$.
	\begin{enumerate}
		\item $M$ is pseudoprojective.
		
		\item $\Gen(M)$ is closed under extensions and $\mathcal F_M=\{X \in \Modl{\mathcal S} \mid \Hom(M,X)=0\}$ is closed under quotients. I. e., $(\Gen(M),\mathcal F_M)$ is a cohereditary torsion theory.
		
		\item For every module $N$, $\tr_M(N)=\sum_{b \in \mathcal S}\tr_M(H_b)\cdot N(b)$.
		
		\item $M=\sum_{b \in \mathcal S}\tr_M(H_b)\cdot M(b)$.
	\end{enumerate}
\end{proposition}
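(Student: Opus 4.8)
The plan is to prove the chain of equivalences in the cyclic order $(1)\Rightarrow(3)\Rightarrow(4)\Rightarrow(2)\Rightarrow(1)$, extracting from the definition of pseudoprojectivity a ``trace-additivity'' condition and then converting that condition back and forth between statements about arbitrary modules, about $M$ itself, and about the closure properties of the pair $(\Gen(M),\mathcal F_M)$. The two genuinely structural ingredients are already available to me: Lemma \ref{l:GenMClosedExtensions}(1), which gives the inclusion $\sum_{b\in\mathcal S}\tr_M(H_b)\cdot N(b)\leq\tr_M(N)$ for free, and Lemma \ref{l:GenMClosedExtensions}(2), which says $\Gen(M)$ is closed under extensions if and only if $\tr_M$ is a radical. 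So throughout, the task reduces to establishing the \emph{reverse} inclusion $\tr_M(N)\leq\sum_{b}\tr_M(H_b)\cdot N(b)$ under the appropriate hypothesis, and to relating that to the behaviour of $\mathcal F_M$.

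First I would do $(3)\Leftrightarrow(4)$, which is the easy normalization step: $(3)\Rightarrow(4)$ is immediate by taking $N=M$ (and using $\tr_M(M)=M$), and for $(4)\Rightarrow(3)$ one notes that applying $(4)$ inside $\tr_M(N)$ — which is a sum of images of maps $M\to N$ — and using Lemma \ref{l:GenMClosedExtensions}(1) pushes the generators of $\tr_M(N)$ into $\sum_b\tr_M(H_b)\cdot N(b)$; concretely, if $x\in\tr_M(N)(a)$ then $x=\sum f^i_a(m_i)$ with $m_i\in M(a)$, and writing each $m_i$ via $(4)$ as a combination $\sum M(r_{ij})(\tr_M(H_b)$-elements$)$ and moving the $f^i$ across gives the claim. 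Next, $(3)\Rightarrow(1)$: given an epimorphism $f\colon N\to N'$ and $0\neq g\colon M\to N'$, the image of $g$ lies in $\tr_M(N')=\sum_b\tr_M(H_b)\cdot N'(b)$ by $(3)$; since $f$ is onto, each generator $M(r)(y)$ with $y\in\tr_M(H_b)(b)$ lifts (lift $y$ along the $H_b$-trace description, or lift $f\circ(\text{suitable map})$), and one assembles an endomorphism $f''$ of $M$ and a map $g'\colon M\to N$ with $fg'=f''g\neq 0$ — here $f''$ records exactly which ``portion'' of $M$ we managed to lift. The remaining implication $(1)\Rightarrow(2)$ splits into the two closure statements; the $\Gen(M)$ part will use Lemma \ref{l:GenMClosedExtensions}(2), so I must show $(1)$ forces $\tr_M$ to be a radical, i.e. $\tr_M(X/\tr_M(X))=0$, and the $\mathcal F_M$ part (closure under quotients) is a direct diagram chase against the definition of pseudoprojectivity: if $\Hom(M,X)=0$ and $X\twoheadrightarrow X'$ then a nonzero $M\to X'$ would, by $(1)$, lift to a nonzero $M\to X$, contradiction. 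Finally $(2)\Rightarrow(3)$ (equivalently $(2)\Rightarrow(4)$): closure of $\mathcal F_M$ under quotients says $X/\tr_M(X)\in\mathcal F_M$ always — i.e. again $\tr_M$ is idempotent — which combined with Lemma \ref{l:GenMClosedExtensions}(2) and a bit of bookkeeping yields the trace formula.

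The step I expect to be the main obstacle is $(1)\Rightarrow(2)$, specifically extracting from the ``some endomorphism $f''$'' clause in the definition of pseudoprojective a clean enough statement to drive the trace computation. The definition as stated only guarantees \emph{existence} of $f''$ and $g'$ with $fg'=f''g\neq 0$, so a single application lifts only part of the image of $g$; the argument must iterate (or pass to a suitable sum over all such $f''$, much as a trace is a sum over all morphisms) to capture \emph{all} of $\tr_M(X)$ and conclude $\tr_M(X/\tr_M(X))=0$. I would handle this by fixing $X$, letting $T=\tr_M(X)$, and for a hypothetical nonzero $h\colon M\to X/T$ applying $(1)$ to the projection $X\to X/T$ to get $0\neq f''h=$ (composite $M\to X\to X/T$), whose image therefore lands in $T/(T\cap\ldots)=0$ in $X/T$ — the contradiction. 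Making the ``lands in $T$'' bookkeeping precise, using that $T$ is by construction the sum of images of \emph{all} maps $M\to X$, is where the care is needed, but it is exactly parallel to the $\Rightarrow$ direction of Lemma \ref{l:GenMClosedExtensions}(2) via a pullback diagram, so I would model the write-up on that proof.
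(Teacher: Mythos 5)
Your overall architecture ($(3)\Leftrightarrow(4)$, $(1)\Rightarrow(2)$, $(2)\Rightarrow(3)$, $(3)\Rightarrow(1)$) is logically complete and differs from the paper's ($(1)\Leftrightarrow(2)$ by pullback, then $(1)\Rightarrow(3)\Rightarrow(4)\Rightarrow(1)$). Two of your steps are fine: $(3)\Leftrightarrow(4)$ via naturality of the maps $M\to N$, and $(1)\Rightarrow(2)$ --- which you flagged as the main obstacle but which is in fact the easy part: for a nonzero $h\colon M\to X/\tr_M(X)$, pseudoprojectivity applied to the projection $p$ gives $g'\colon M\to X$ with $pg'=hf''\neq 0$, while $\Img g'\leq\tr_M(X)=\Ker p$ forces $pg'=0$; this is cleaner than the paper's pullback argument for Lemma \ref{l:GenMClosedExtensions}(2). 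The genuine gap is in $(3)\Rightarrow(1)$: you apply the trace formula to the \emph{codomain} $N'$ and try to lift the generators of $\sum_b\tr_M(H_b)\cdot N'(b)$. Those generators are $N'(r)(y)$ with $r\in\tr_M(H_b)(c)$ and $y\in N'(b)$ \emph{arbitrary} --- they carry no relation to $g$ --- so after lifting $m^{y}\colon H_b\to N'$ through the epimorphism you obtain maps $M\to N'$ of the form $m^{y}\circ f^k$ that are not of the form $g\circ(\text{endomorphism of }M)$, and the required identity $fg'=gf''\neq 0$ cannot be assembled. The repair is to decompose in $M$ itself, i.e.\ to prove $(4)\Rightarrow(1)$ (available to you via $(3)\Rightarrow(4)$): pick $y_c\in M(c)$ with $g_c(y_c)\neq 0$, write $y_c=\sum M(r)(x)$ with $r\in\tr_M(H_b)(c)$ and $x\in M(b)$, expand $r=\sum f^k_c(z_k)$ with $f^k\colon M\to H_b$, and take $f''=m^{x}\circ f^k$; then $gf''$ factors through the projective $H_b$, which is exactly what makes the lift $g'$ exist. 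This is the paper's argument.

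The second weak point is $(2)\Rightarrow(3)$, where the substance of the hard direction lives and where ``a bit of bookkeeping'' does not name the key idea. What is needed is: present $\tr_M(N)$ by an epimorphism $\varphi\colon\bigoplus_j H_{c_j}\to\tr_M(N)$ whose components are the maps $m^{x_j}$; then for \emph{every} $h\colon M\to\bigoplus_j H_{c_j}$ one has $\Img(\varphi h)\leq\sum_b\tr_M(H_b)\cdot N(b)=:L$, because each component of $h$ lands in $\tr_M(H_{c_j})$ and $m^{x_j}$ carries $\tr_M(H_{c_j})$ into $\tr_M(H_{c_j})\cdot N(c_j)$. Hence $\tr_M(N)/L$ is a quotient of $\bigoplus_j H_{c_j}\big/\tr_M\bigl(\bigoplus_j H_{c_j}\bigr)$, which lies in $\mathcal F_M$ by Lemma \ref{l:GenMClosedExtensions}(2), so $\tr_M(N)/L\in\mathcal F_M$ by closure under quotients; since it also lies in $\Gen(M)$, it is zero. (The paper runs this same presentation-by-representables computation, but against pseudoprojectivity directly, as its $(1)\Rightarrow(3)$.) Without this step your plan does not close; with it, your route is a correct and mildly different reorganization of the paper's proof.
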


\begin{proof}
	(1) $\Rightarrow$ (2) is straightforward.
	
	(2) $\Rightarrow$ (1). Let $p:A \rightarrow B$ be an epimorphism and $f:M \rightarrow B$ a non-zero morphism. The pullback of $f$ along $p$ gives morphisms $\overline p:P \rightarrow M$ and $\overline f:P \rightarrow A$ such that $p\overline f = f \overline p$. We claim that there exists $g:M \rightarrow P$ with $f\overline p g \neq 0$. Then, $\overline p g$ and $\overline f g$ are the morphisms witnessing the pseudoprojectivity of $M$.
	
	Suppose that the claim is false. Then $\tr_M(P) \leq \Ker f\overline p$. This means that there exists an epimorphism from $P/\tr_M(P)$ to $P/\Ker f\overline p \cong \Img f$ and, by Lemma \ref{l:GenMClosedExtensions} and (2), this implies that $\Hom(M,\Img f)=0$, in particular that $f=0$. This is a contradiction.
	
	(1) $\Rightarrow$ (3). Suppose that (3) is false and that there exists a module $N$ satisfying $N\neq \sum_{b \in \mathcal S}\tr_M(H_b)\cdot N(b)$. Denote by $L$ the submodule $\sum_{b \in \mathcal S}\tr_M(H_b)\cdot N(b)$ of $N$ and take $\varphi:\bigoplus_{i \in I}H_{c_i} \rightarrow \tr_M(N)$ an epimorphism where, for every $j \in I$, $\varphi k_j=m^{x_j}$ for some element $x_j \in \tr_M(N)(c_j)$ ($k_j$ being the inclusion $H_{c_j} \hookrightarrow \bigoplus_{i \in I}H_{c_i}$). By Lemma \ref{l:GenMClosedExtensions}, $L \leq \tr_M(N)$ and, since the quotient $\tr_M(N)/L$ is a nonzero module in $\Gen(M)$, we can find a nonzero morphism $f:M \rightarrow \tr_M(N)/L$.
	
	Now notice that every pair of morphisms $s:M \rightarrow M$ and $h:M \rightarrow \bigoplus_{i \in I}H_{c_i}$ satisfying $p\varphi h=fs$, where $p:\tr_M(N)	\rightarrow \tr_M(N)/L$ is the projection, satisfies that $fs=0$, since every morphism $h:M \rightarrow \bigoplus_{i \in I}H_{c_i}$ satisfies that $\Img \varphi h \leq L$. This contradicts (1).
	
	(3) $\Rightarrow$ (4). Trivial.
	
	(4) $\Rightarrow$ (1). Let $p:A \rightarrow B$ be an epimorphism and $f:M \rightarrow B$ a nonzero morphism. Fix $c \in \mathcal S$ and $y_c \in M(c)$ with $f_c(y_c)\neq 0$. By (4) we can find $b_1, \cdots, b_n \in \mathcal S$, $r_1^i, \ldots, r_n^i \in \tr_M(H_{b_i})(c)$ and $x_1^i, \ldots, x_n^i \in M(b_i)$ such that $y_c = \sum_{i=1}^n\sum_{j =1}^{n_i}M(r_j^i)(x_j^i)$.
	
	Since $f_c(y_c)\neq 0$, there exists $i$ and $j$ such that $f_c(M(r_j^i)(x_j^i))\neq 0$. Since $r_j^i \in \tr_M(H_{b_i})(c)$, we can find $f^1, \ldots, f^l \in \Hom(M,H_{b_i})$ and $z_1, \ldots, z_l \in M(c)$ such that $r_j^i=\sum_{j=1}^lf^j_c(z_j)$. In particular, there exists $k$ such that $f_c(M(f^k_c(z_k))(x_j^i))\neq 0$. Finally, using that $H_{b_i}$ is a projective module, we can find $h:H_{b_i}\rightarrow A$ satisfying $ph=f m^{x_j^i}$. Then taking $s=m^{x_j^i}f^k$ and $g=hf^k$, we get that $pg=fs$ and that $fs \neq 0$, since
	\begin{displaymath}
		(fs)_c(z_k)=f_c\big(M(f^k_c(z_k))(x_j^i)\big) \neq 0.
	\end{displaymath}
	This proves that $M$ is pseudoprojective.
\end{proof}

\begin{remark}\label{r:GolanMistake}
	In \cite[p. 20]{Golan} it is asserted that a left $R$-module $M$ is pseudoprojective if and only if the class $\{X \in \Modl{R} \mid \Hom_R(M,X)=0\}$ is a TTF class. This is not true unless $\Gen(M)$ is closed under extensions, as the preceding proposition suggests and the following example shows:
	
	Let $T$ be the ring of all the $\mathbb N$-square triangular matrices which are constant on the diagonal and have finitely many non-zero entries above the diagonal. Then, $T$ is local, right perfect and not left perfect. The left $T$-module $T/J$ is the unique simple left module (up to isomorphism) and, by Bass Theorem P \cite[Theorem 28.4]{AndersonFuller}, every non-zero left $T$-module contains a minimal submodule, i. e., $\{X \in \Modl{T} \mid \Hom_T(T/J,X)=0\}$ is the zero class. In particular, this class is a TTF-class, but $T/J$ is not a pseudoprojective left $T$-module as a consequence of \cite[Theorem 2.4]{CortesTorrecillas06}.
\end{remark}

The following characterization of jansian torsion theories is an extension of \cite[Proposition 4.17 and Corollary 4.22]{Golan} (see \cite{Wisbauer96} as well). It is crucial in proving our characterization of finitely accessible additive categories having enough flat objects and enough projective objects, Theorems \ref{t:ExistenceEnoughFlats} and \ref{t:ExistenceEnoughProjectives}.

\begin{proposition}\label{p:CharacterizationJansian}
	The following assertions are equivalent for the hereditary torsion theory $\sigma$.
	\begin{enumerate}
		\item $\sigma$ is jansian.
		
		\item $L_\sigma(M)$ is a $\sigma$-dense submodule of $M$ for every module $M$.
		
		\item $L_\sigma(H_a)$ is a $\sigma$-dense submodule of $H_a$ for every $a \in \mathcal S$.
		
		\item A module $M$ is $\sigma$-torsion if and only if $\Hom(L_\sigma(H_a),M)=0$ for every $a \in \mathcal S$.
		
		\item The module $P=\bigoplus_{a \in \mathcal S}L_\sigma(H_a)$ is pseudoprojective and $\mathcal T_\sigma=\{X \in \Modl{\mathcal S}\mid \Hom(P,X)=0\}$.
	\end{enumerate}
	When all these conditions are satisfied, the class $\mathcal C_\sigma$ satisfying that $(\mathcal C_\sigma,\mathcal T_\sigma)$ is a torsion theory is precisely $\Gen(P)$ and coincides with the class of all $\sigma$-cotorsion free modules.
\end{proposition}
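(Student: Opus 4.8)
The plan is to prove the cycle of implications $(1)\Rightarrow(2)\Rightarrow(3)\Rightarrow(4)\Rightarrow(5)\Rightarrow(1)$, together with the final identification of $\mathcal C_\sigma$, exploiting the tools already developed: Lemma \ref{l:L=C}, Proposition \ref{p:PropertiesCotorsionFree}, Proposition \ref{p:Pseudoprojective}, Lemma \ref{l:GenMClosedExtensions}, and Lemma \ref{l:ProductIsDense}. Throughout I will write $L_a=L_\sigma(H_a)$ for brevity in the proof.

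For $(1)\Rightarrow(2)$: if $\sigma$ is jansian, then $\mathcal T_\sigma$ is closed under products, so for any module $M$ the product $N=\prod\{M/K \mid K\in\mathcal U_\sigma(M)\}$ is $\sigma$-torsion, and the canonical map $M\to N$ has kernel exactly $L_\sigma(M)$; hence $M/L_\sigma(M)$ embeds in a $\sigma$-torsion module and, $\mathcal T_\sigma$ being closed under submodules, it is itself $\sigma$-torsion, i.e.\ $L_\sigma(M)$ is $\sigma$-dense. The implication $(2)\Rightarrow(3)$ is the trivial specialization to $M=H_a$. For $(3)\Rightarrow(4)$: if $M$ is $\sigma$-torsion then any morphism $L_a\to M$ has $\sigma$-torsion image, but it also has image a quotient of $L_a$, which by Proposition \ref{p:PropertiesCotorsionFree} (and, for hereditary $\sigma$, the fact that $C_\sigma(H_a)$ is cotorsion-free and, by Lemma \ref{l:L=C} together with (3), equals $L_a$) is $\sigma$-cotorsion-free; a module that is simultaneously $\sigma$-torsion and $\sigma$-cotorsion-free is zero, so $\Hom(L_a,M)=0$. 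Conversely, suppose $\Hom(L_a,M)=0$ for all $a$; I want $M\in\mathcal T_\sigma$, equivalently $\langle x\rangle\in\mathcal T_\sigma$ for every $a$-element $x$. Using (3), $H_a/L_a$ is $\sigma$-torsion, and one checks that the composite $L_a\hookrightarrow H_a\xrightarrow{m^x}\langle x\rangle$ being zero (since its target embeds via a hom from $L_a$ — here one passes to $\langle x\rangle$ and uses that it is generated by an $a$-element) forces $\langle x\rangle$ to be a quotient of $H_a/L_a$, hence $\sigma$-torsion. This last point is the step most likely to need care: I must justify that $\Hom(L_a,M)=0$ really kills the restriction of $m^x$ to $L_a$ for every $a$-element $x$ of $M$, i.e.\ that $L_a\le(x:0)$.

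For $(4)\Rightarrow(5)$: set $P=\bigoplus_{a\in\mathcal S}L_a$. Then $\Hom(P,M)=0$ iff $\Hom(L_a,M)=0$ for all $a$ iff (by (4)) $M$ is $\sigma$-torsion, giving the second half of (5). For pseudoprojectivity, I will verify criterion (4) of Proposition \ref{p:Pseudoprojective}, namely $P=\sum_{b\in\mathcal S}\tr_P(H_b)\cdot P(b)$. First, $\mathcal F_P=\{X\mid\Hom(P,X)=0\}=\mathcal T_\sigma$ is closed under quotients (it is a torsion class), so $\Gen(P)=\mathcal F_P{}^{\circ}$ in the appropriate sense; more directly, $\Gen(P)$ consists of modules with $\tr_P=$ everything, and since $P$ is cotorsion-free (Proposition \ref{p:PropertiesCotorsionFree}: direct sums of cotorsion-free modules are cotorsion-free when $\sigma$ is hereditary), every quotient of $P$ is cotorsion-free; combined with the cohereditary behaviour this yields that $\Gen(P)$ equals the class of all $\sigma$-cotorsion-free modules. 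In particular each $L_b=C_\sigma(H_b)$ lies in $\Gen(P)$, so $\tr_P(H_b)\supseteq$ enough of $H_b$; chasing this through Lemma \ref{l:GenMClosedExtensions}(1) and the fact that $L_b\le\tr_P(H_b)$ gives $\sum_b\tr_P(H_b)\cdot P(b)\supseteq\sum_b L_b\cdot P(b)$, and since $P$ is cotorsion-free this sum must be all of $P$ (any proper $\sigma$-dense-complement would contradict cotorsion-freeness — more precisely, $\sum_b L_b\cdot P(b)$ is $\sigma$-dense in $P$ by Lemma \ref{l:ProductIsDense}, and a $\sigma$-cotorsion-free module has no proper $\sigma$-dense submodule). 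Hence $P$ is pseudoprojective.

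Finally $(5)\Rightarrow(1)$: if $\mathcal T_\sigma=\{X\mid\Hom(P,X)=0\}$, then $\mathcal T_\sigma$ is automatically closed under products (a product of modules with no maps from $P$ has no maps from $P$) and under submodules ($\sigma$ is hereditary by hypothesis), so $\sigma$ is jansian; alternatively, pseudoprojectivity of $P$ gives via Proposition \ref{p:Pseudoprojective} that $(\Gen(P),\mathcal F_P)=(\Gen(P),\mathcal T_\sigma)$ is a torsion theory, which is exactly the statement that $\mathcal T_\sigma$ is a TTF-class. The concluding assertion that $\mathcal C_\sigma=\Gen(P)$ and that this equals the $\sigma$-cotorsion-free modules then follows by reading off the two descriptions of the complementary class already obtained in the course of $(4)\Rightarrow(5)$: $\mathcal C_\sigma$ is the unique class with $(\mathcal C_\sigma,\mathcal T_\sigma)$ a torsion theory, namely $\{X\mid \Hom(X,T)=0\ \forall T\in\mathcal T_\sigma\}$, and since $P$ is cotorsion-free with $\mathcal T_\sigma=\mathcal F_P$, one gets $\mathcal C_\sigma=\Gen(P)=\{\sigma\text{-cotorsion-free modules}\}$, using Proposition \ref{p:PropertiesCotorsionFree} (closure of cotorsion-free modules under quotients, extensions, and — for hereditary $\sigma$ — direct sums) to see that the class of $\sigma$-cotorsion-free modules is generated by $P$. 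The main obstacle I anticipate is the careful bookkeeping in $(4)\Rightarrow(5)$ needed to identify $\Gen(P)$ with the $\sigma$-cotorsion-free class and to push the trace computation through, since this is where the hereditary hypothesis, Lemma \ref{l:ProductIsDense}, and the criterion (4) of Proposition \ref{p:Pseudoprojective} all have to be combined correctly.
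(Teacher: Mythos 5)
Your proof is correct and follows essentially the same route as the paper: the same chain of implications (the paper proves $(1)\Rightarrow(2)\Rightarrow(3)\Rightarrow(4)\Rightarrow(1)$ and then $(1)\Leftrightarrow(5)$ rather than a single pentagon, and in $(3)\Rightarrow(4)$ it argues the contrapositive via the Gabriel topology where you argue directly that $\Hom(L_\sigma(H_a),M)=0$ forces $L_\sigma(H_a)\le(x:0)$, but the content of each step is the same), the same use of Lemma \ref{l:L=C}, Lemma \ref{l:ProductIsDense} and criterion (4) of Proposition \ref{p:Pseudoprojective} to get pseudoprojectivity, and the same closing identification of $\mathcal C_\sigma$. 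Two small points of hygiene: inside $(4)\Rightarrow(5)$ you invoke (2)/(3) (via $L_\sigma(H_a)=C_\sigma(H_a)$ and the density needed for Lemma \ref{l:ProductIsDense}), which in a strict pentagon is not yet available from (4) alone --- easily repaired, since the second half of (5) already follows from (4) and yields (1) by your own product argument, hence (2) and (3); and the full equality $\Gen(P)=\{\sigma\text{-cotorsion-free modules}\}$ should not be asserted before pseudoprojectivity is proved (the inclusion of the cotorsion-free class into $\Gen(P)$ needs it), though the only fact you actually use at that point, $L_\sigma(H_b)\le\tr_P(H_b)$, is trivial because $L_\sigma(H_b)$ is a direct summand of $P$.
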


\begin{proof}
	(1) $\Rightarrow$ (2). Trivial, since $L_\sigma(M)$ is the kernel of the morphism from $M$ to $\prod_{U\in \mathcal U_\sigma(M)}\frac{M}{U}$ induced by all projections, and this latter module is $\sigma$-torsion by (1).
	
	(2) $\Rightarrow$ (3). Trivial.
	
	(3) $\Rightarrow$ (4). If $M$ is $\sigma$-torsion, $\Hom(L_\sigma(H_a),M)=0$ since $L_\sigma(H_a)$ is $\sigma$-cotorsion-free by Lemma \ref{l:L=C}.
	
	Conversely, suppose that $M$ is not $\sigma$-torsion. By Proposition \ref{p:TopologiesAndTorsion}, there exists $a \in \mathcal S$ and $x \in M(a)$ such that $(x:0) \notin \mathcal U_\sigma(H_a)$. This means that $L_\sigma(H_a)$ is not contained in $(x:0)$. Then, the restriction of the canonical morphism $m^x:H_a \rightarrow M$ to $L_\sigma(H_a)$ is non-zero and $\Hom(L_\sigma(H_a),M)\neq 0$.
	
	(4) $\Rightarrow$ (1). If $\{M_i \mid i \in I\}$ is a family of $\sigma$-torsion modules, then $\Hom\left(L_\sigma(H_a),\prod_{i \in I}M_i\right)\cong \prod_{i \in I}\Hom(L_\sigma(H_a),M_i)=0$ for every $a \in \mathcal S$, so that $\prod_{i \in I}M_i$ is $\sigma$-torsion. Then, (1) holds.
	
	(1) $\Rightarrow$ (5). Clearly $\mathcal T_\sigma=\{X \in \Modl{\mathcal S}\mid \Hom_{\mathcal S}(P,X)=0\}$ by (4). Moreover, if $H=\bigoplus_{a \in \mathcal S}H_a$, notice that $L_\sigma(H) \leq P$, since $P$ is $\sigma$-dense in $H$ by (3)~. But $P=\bigoplus_{a \in \mathcal S}C_\sigma(H_a)$ by Lemma \ref{l:L=C}, so that $P \leq C_\sigma(H) \leq L_\sigma(H)$. The conclusion is that $P=L_\sigma(H)$.
	
	Now, using that $\tr_P(H_a)=L_\sigma(H_a)$, we get that $\sum_{a \in \mathcal S}\tr_P(H_a)\cdot P(a)$ is $\sigma$-dense in $P$ by Lemma \ref{l:ProductIsDense}, so that $P\leq \sum_{a \in \mathcal S}\tr_P(H_a)\cdot P(a)$. By Theorem \ref{p:Pseudoprojective}, $P$ is pseudoprojective.
	
	(5) $\Rightarrow$ (1). Trivial.
	
	Finally, that $\Gen(P)$ coincides with $\mathcal C_\sigma$ follows from Proposition \ref{p:Pseudoprojective}. Since $P$ is $\sigma$-cotorsion-free, every module in $\Gen(P)$ is $\sigma$-cotorsion-free as well by Proposition \ref{p:PropertiesCotorsionFree}. Moreover, if $C$ is a $\sigma$-cotorsion-free module, then $\Hom(C,T)=0$ for every $T \in \mathcal T_\sigma$. Then $C \in \mathcal C_\sigma$.
\end{proof}

The closure under pure subobjects of the class of flats of a finitely accessible additive category with enough flats (Corollary \ref{c:EnoughFlatsImpliesPureSubmodules}) will follow from the following result:

\begin{corollary}\label{c:PureSubobjectsJansian}
	If $\sigma$ is a jansian torsion theory, then the class of all $\sigma$-cotorsion-free modules is closed under pure subobjects.
\end{corollary}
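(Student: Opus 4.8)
The plan is to combine the description of the $\sigma$-cotorsion-free modules furnished by Proposition~\ref{p:CharacterizationJansian} with the elementwise form of purity in $\Modl{\mathcal S}$ recalled just before Theorem~\ref{t:Purification}. Write $I_a = L_\sigma(H_a)$ for each $a \in \mathcal S$ and $P = \bigoplus_{a \in \mathcal S} I_a$. By Proposition~\ref{p:CharacterizationJansian}, $P$ is pseudoprojective, the class of $\sigma$-cotorsion-free modules is exactly $\Gen(P)$, and, as established in the proof of that proposition, $\tr_P(H_a) = I_a$ for every $a$. Applying Proposition~\ref{p:Pseudoprojective}(3) to $P$ gives $\tr_P(M) = \sum_{b \in \mathcal S} I_b \cdot M(b)$ for every module $M$. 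Consequently, $M$ is $\sigma$-cotorsion-free if and only if every $a$-element $x$ of $M$ can be written as $x = \sum_{i=1}^n M(r_i)(x_i)$ for some $b_1,\dots,b_n \in \mathcal S$, $r_i \in I_{b_i}(a) \subseteq H_{b_i}(a)$ and $x_i \in M(b_i)$.

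So let $M$ be a pure submodule of a $\sigma$-cotorsion-free module $N$, with inclusion $\iota \colon M \hookrightarrow N$, and fix an $a$-element $x$ of $M$. Viewing $x$ as an element of $N(a)$ and using that $N$ is $\sigma$-cotorsion-free, choose $x = \sum_{i=1}^n N(r_i)(y_i)$ with $r_i \in I_{b_i}(a)$ and $y_i \in N(b_i)$. The next step is to encode this single relation as a commutative square over finitely presented modules. Set $F = H_a$ and $G = \bigoplus_{i=1}^n H_{b_i}$, both finitely presented; let $\rho_i \colon H_a \to H_{b_i}$ be the morphism corresponding under Yoneda to $r_i \in H_{b_i}(a)$; let $\xi \colon F \to G$ be the morphism with $i$-th component $\rho_i$; and let $q \colon G \to N$ be the morphism whose composite with the $i$-th coproduct injection $H_{b_i} \to G$ is $m^{y_i}$. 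Evaluating at the identity $1_a$ one checks $m^{y_i} \circ \rho_i = m^{N(r_i)(y_i)}$, and hence, using $\sum_i m^{z_i} = m^{\sum_i z_i}$, that $q \circ \xi = m^{\sum_{i=1}^n N(r_i)(y_i)} = \iota \circ m^x$ as morphisms $H_a \to N$. Thus the square with edges $\xi$, $\iota$, $m^x$, $q$ commutes.

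Since $\iota$ is a pure monomorphism and $F$, $G$ are finitely presented, purity yields $h \colon G \to M$ with $h \circ \xi = m^x$. Let $x_i \in M(b_i)$ be the element corresponding to the composite $H_{b_i} \to G \xrightarrow{h} M$. Evaluating $h \circ \xi = m^x$ at $1_a$ and using $m^{x_i} \circ \rho_i = m^{M(r_i)(x_i)}$, we obtain $x = \sum_{i=1}^n M(r_i)(x_i)$ with $r_i \in I_{b_i}(a)$ and $x_i \in M(b_i)$. As $a$ and the $a$-element $x$ were arbitrary, $M = \sum_{b \in \mathcal S} I_b \cdot M(b) = \tr_P(M)$, so $M \in \Gen(P)$, i.e.\ $M$ is $\sigma$-cotorsion-free.

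The one delicate point is the faithful translation of the relation $x = \sum_i N(r_i)(y_i)$ into the commuting square over $F = H_a$ and $G = \bigoplus_i H_{b_i}$, together with the routine Yoneda bookkeeping $m^{z} \leftrightarrow z$; after that, purity hands back the corresponding relation inside $M$ and everything else is a direct reading of Propositions~\ref{p:CharacterizationJansian} and~\ref{p:Pseudoprojective} and of the elementwise characterization of pure exact sequences. Note that heredity of $\sigma$, which is part of the meaning of ``jansian torsion theory'' here, enters only through Proposition~\ref{p:CharacterizationJansian}.
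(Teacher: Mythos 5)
Your proof is correct and follows essentially the same route as the paper's: both reduce the statement via Proposition~\ref{p:CharacterizationJansian} to showing that a pure submodule of a module in $\Gen(P)$ again lies in $\Gen(P)$, encode the relation $x=\sum_i N(r_i)(y_i)$ (with $r_i\in\tr_P(H_{b_i})(a)=L_\sigma(H_{b_i})(a)$) as a commutative square over $H_a$ and $\bigoplus_i H_{b_i}$, and use purity to pull the relation back into the submodule. The only cosmetic difference is that you invoke Proposition~\ref{p:Pseudoprojective}(3) directly for the final containment where the paper cites Lemma~\ref{l:GenMClosedExtensions}(1).
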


\begin{proof}
	By the previous result there exists a pseudoprojective module $P$ such that the class of all $\sigma$-cotorsion-free modules is precisely $\Gen(P)$. Take $M$ a $\sigma$-cotorsion-free module and $K \leq M$ a pure submodule. We prove that $\tr_P(K)=K$.
	
	Take $a \in \mathcal S$ and $x \in K(a)$. Since $M=\sum_{b \in \mathcal S}\tr_P(H_b)\cdot M(b)$ by Proposition \ref{p:Pseudoprojective}, we can find $B \subseteq \mathcal S$ finite and, for every $b \in B$, elements $r_1^b, \ldots, r_{n_b}^b \in \tr_P(H_b)(a)$ and $x_1^b, \ldots, x_{n_b}^b \in M(b)$ such that
	\begin{displaymath}
		x=\sum_{b \in B}\sum_{i=1}^{n_b}M(r_i^b)(x_i^b)
	\end{displaymath}
	Now, consider the following commutative diagram
	\begin{displaymath}
		\begin{tikzcd}
			H_a \arrow{d}{m^x} \arrow{r}{f} & \bigoplus_{b \in B}\bigoplus_{i \in I} F_{b,i} \arrow{d}{g}\\
			K \arrow[hook]{r} & M
		\end{tikzcd}
	\end{displaymath}
	where:
	\begin{itemize}
		\item $f$ is the morphism induced in the product by the family $\left\{f_{b,i} \mid b \in \mathcal S, i \in \{1, \ldots, n_b\}\right\}$. Here, $F_{b,i}=H_b$ and $f_{b,i}=m^{r_i^b}$ for every $b \in B$ and $i \in \{1, \ldots, n_b\}$.
		
		\item $g$ is the morphism induced in the direct sum by the family $\{g_{b,i} \mid b \in \mathcal S, i \in \{1, \ldots, n_b\}\}$. Here, $g_{b,i}=m^{x_i^b}$ for every $b \in B$ and $i \in \{1, \ldots, n_b\}$.
	\end{itemize}
	Using that $K$ is a pure submodule of $M$ we can find $h:\bigoplus_{b \in B}\bigoplus_{i \in I} F_{b,i} \rightarrow K$ such that $hf=m^x$. In particular (we omit the subscript $a$ in the natural transformations):
	\begin{eqnarray*}
		x = m^x(1_a)=hf(1_a)=h\left((r_i^b)_{b \in B, i \leq n_b}\right)=h\left(\sum_{b \in B, i \leq n_b}k_{b,i}(r_i^b)\right)\\
		=h\left(\sum_{b \in B, i \leq n_b}k_{b,i}(r_i^b\circ 1_b)\right)=\sum_{b \in B, i \leq n_b}M(r_i^b)(hk_{b,i}(1_b))
	\end{eqnarray*}
	where $k_{b,i}$ is the inclusion of $F_{b,i}$ into the direct sum. This means that $x \in \sum_{b \in \mathcal S}\left(\tr_P(H_b)\cdot K(b)\right)(a)$, which is contained in $\tr_P(K)$ by Lemma \ref{l:GenMClosedExtensions}. This means that $K=\tr_P(K)$.
\end{proof}

We finish this section with some facts about the torsion pair, $\tau = (\mathcal T_\tau,\mathcal F_\tau)$, cogenerated in $\Modl{\mathcal S}$ by the class of all flat modules. In other words,
\begin{displaymath}
\mathcal T_\tau = \{T \in \Modl{\mathcal S} \mid \Hom(T,X)=0 \quad \forall X \in \Flatl{\mathcal S}\}
\end{displaymath}
and
\begin{displaymath}
\mathcal F_\tau = \{F \in \Modl{\mathcal S} \mid \Hom(Y,F)=0 \quad \forall Y \in \mathcal T_\tau\}.
\end{displaymath}
Notice that $\tau(M) \leq \rej_{\Flatl{\mathcal S}}(M)$ for every module $M$. 

Now we establish some basic properties of the torsion theory $\tau$:

\begin{proposition}\label{p:PropertiesOfTau}
\begin{enumerate}
\item Every $\tau$-torsion free module is isomorphic to a submodule of a direct product of injective envelopes of flat modules.

\item Let $\kappa$ be an infinite regular cardinal and $M$ a module which is $<\kappa$-presented. Then $M$ is $\tau$-torsion if and only if $\Hom(M,T)=0$ for every $<\kappa$-presented flat module $T$.

\item If $M$ is finitely presented, then $M$ is $\tau$-torsion if and only if $\Hom(M,T)=0$ for every finitely generated projective module $T$.

\item The torsion theory $\tau$ is hereditary if and only if the injective envelope of every flat module is $\tau$-torsion free.
		
\item If $M$ is a module with $|M|>w(\mathcal S)$, then every $\tau$-dense submodule of $M$ has the same cardinality as $M$.
		
\item If $\tau$ is hereditary, then any $\tau$-dense submodule of a flat module is essential in $\Modl{\mathcal S}$. In particular, $L_\tau(M)$ contains the socle of $M$ for each flat module $M$.
\end{enumerate}
\end{proposition}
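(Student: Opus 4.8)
The plan is to prove the six items in order, using (1) as the workhorse (it will feed directly into (4)) and disposing of (5)--(6) by purity and heredity respectively. For (1) I would argue pointwise: if $F$ is $\tau$-torsion-free and $0\ne x\in F(a)$, then the cyclic submodule $\langle x\rangle$ is again $\tau$-torsion-free (torsion-free classes are closed under submodules) and nonzero, hence not $\tau$-torsion; since $\tau$ is cogenerated by $\Flatl{\mathcal S}$ there are a flat module $G$ and a nonzero morphism $h\colon\langle x\rangle\to G$, and, using $\langle x\rangle(b)=\{F(r)(x)\mid r\in H_a(b)\}$ and naturality of $h$, one gets $h_a(x)\ne 0$. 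Composing $h$ with $G\hookrightarrow E(G)$ and extending along $\langle x\rangle\hookrightarrow F$ by injectivity of $E(G)$ produces $\varphi^x\colon F\to E(G)$ with $\varphi^x_a(x)\ne 0$; the morphism from $F$ to the product of these $E(G)$'s over all nonzero $x\in F$ is then the desired monomorphism.

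For (2)--(4): the forward implications of (2) and (3) are immediate. For the converse of (2) I would use that $\Hom(M,-)$ preserves $\kappa$-directed colimits (as $M$ is $<\kappa$-presented) together with the fact that every flat module is a $\kappa$-directed union of $<\kappa$-presented flat submodules --- obtained by iterating the purification Theorem~\ref{t:Purification} when $\kappa>w(\mathcal S)$ (pure submodules of flats are flat, as in Theorem~\ref{t:Deconstructible}), and from the accessibility of $\Flatl{\mathcal S}$ in general --- so that any $f\colon M\to X$ with $X$ flat factors through a $<\kappa$-presented flat module and therefore vanishes. For (3), $\Hom(M,-)$ commutes with all direct limits when $M$ is finitely presented, and every flat module is a direct limit of finitely generated projectives, so $f\colon M\to X$ factors through a finitely generated projective (equivalently, (3) is the case $\kappa=\aleph_0$ of (2), the finitely presented flat modules being exactly the finitely generated projectives). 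For (4) I would recall that $\tau$ is hereditary iff $\mathcal F_\tau$ is closed under injective envelopes \cite[Proposition VI.3.2]{Stenstrom}: if $\tau$ is hereditary then each flat $G$ lies in $\mathcal F_\tau$, so $E(G)\in\mathcal F_\tau$; conversely, if every $E(G)$ with $G$ flat is $\tau$-torsion-free, then (1) embeds any $F\in\mathcal F_\tau$ into the $\tau$-torsion-free injective module $\prod_j E(G_j)$, whence $E(F)$ also embeds into it and is $\tau$-torsion-free.

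For (5) --- where the flatness of $M$ seems to be needed, since for a $\tau$-torsion $M$ the zero submodule is $\tau$-dense --- given a $\tau$-dense $K\le M$ I would use Theorem~\ref{t:Purification}(1) to obtain a pure submodule $\overline K\supseteq K$ of $M$ with $|\overline K|\le\max\{w(\mathcal S),|K|\}$; since $M$ is flat and $\overline K$ is pure in $M$, the quotient $M/\overline K$ is flat, hence $\tau$-torsion-free, while being a quotient of the $\tau$-torsion module $M/K$ it is also $\tau$-torsion, so $M/\overline K=0$; thus $|M|=|\overline K|\le\max\{w(\mathcal S),|K|\}$, which combined with $|M|>w(\mathcal S)$ gives $|K|=|M|$. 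For (6), assuming $\tau$ hereditary, $F$ flat and $K$ $\tau$-dense in $F$: for any $0\ne N\le F$ the module $N/(N\cap K)\cong(N+K)/K$ is a submodule of the $\tau$-torsion $F/K$, hence $\tau$-torsion by heredity, whereas $N\le F$ is $\tau$-torsion-free, so $N\cap K\ne 0$; therefore $K$ is essential, and since every essential submodule contains all simple submodules, $\Soc(M)\le L_\tau(M)$ for each flat $M$. The only genuinely new ingredient here is (1), from which (4) follows almost for free; the steps that demand the most care are the $\kappa$-directed presentation of flat modules by $<\kappa$-presented flat ones used in (2) and the fact that in (5) one must purify inside a flat $M$ so that the quotient remains flat.
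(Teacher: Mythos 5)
Your proof is correct and follows essentially the same route as the paper's: the pointwise embedding of a $\tau$-torsion-free module into a product of injective envelopes of flat modules for (1), accessibility of $\Flatl{\mathcal S}$ (supplemented by purification) for (2)--(3), extending the monomorphism of (1) along the injective envelope for (4), purification of a small dense submodule inside a flat module for (5), and the standard essentiality argument for (6), which the paper merely cites from Golan. Your observation that (5) needs $M$ to be flat is accurate --- the paper's own proof also relies on $M/N$ being flat, so the hypothesis as stated in the proposition is incomplete there as well.
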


\begin{proof}
(1) Given a $\tau$-torsion free module $M$ and $x \in M$ non-zero, there exists by (1) a non-zero morphism $f:\langle x \rangle \rightarrow F_x$ with $F_x$ flat. This morphism extends to a morphism $g_x:M \rightarrow E(F_x)$, where $E(F_x)$ is the injective envelope of $F_x$. The induced morphism $f:M \rightarrow \prod_{x \in M}E(F_x)$ is then a monomorphism.

(2) Since $\Flatl{\mathcal S}$ is finitely accessible, by \cite[Example 2.13(1)]{AdamekRosicky} it is $\kappa$-accessible for every regular uncountable cardinal $\kappa$. Then, any flat functor is the direct limit of a $\kappa$-direct system of $<\kappa$-presented flat functors. Then the result follows since $\Hom_{\mathcal S}(M,-)$ commutes with $\kappa$-directed limits.

(3) Follows from (2).

(4). If $\tau$ is hereditary, then (b) follows from \cite[Proposition VI.3.2]{Stenstrom}.

Conversely, given $M$ a $\tau$-torsion free module, there exists by (1) a family of flat modules, $\{F_i \mid i \in I\}$, and a monomorphism $f:M \rightarrow \prod_{i \in I}E(F_i)$. This monomorphism factors through the inclusion $e:M \rightarrow E(M)$, that is, there exists $g:E(M) \rightarrow \prod_{i \in I}E(F_i)$ with $ge=f$. Since $e$ is an essential monomorphism and $f$ is monic, $g$ is monic as well. But this means that $E(M)$ is $\tau$-torsion-free, as $\prod_{i \in I}E(F_i)$ is so by hypothesis. Then, the result follows from \cite[Proposition VI.3.2]{Stenstrom}.

(5) Suppose that $K \leq M$ has smaller cardinality than $M$, then we can apply Theorem \ref{t:Purification} to find a pure submodule $N$ of $M$ containing $K$ with the same cardinality than $K$. Then $N \lneq M$, $M/N$ is flat and the canonical projection $M/K \rightarrow M/N$ is nonzero. Then $K$ is not $\tau$-dense.
	
(6) The proof of \cite[5.8]{Golan} holds over functor categories as well.
\end{proof}

\section{When is $\Flatl{\mathcal S}$ abelian?}

In this section we characterize when $\Flatl{\mathcal S}$ is abelian, for which we start studying when it is preabelian. We describe monomorphisms, epimorphisms, kernels and cokernels in $\Flatl{\mathcal S}$. We will see that these notions are related to the torsion theory $\tau$ in $\Modl{\mathcal{S}}$ cogenerated by the flat modules. Given a morphism $f:A \rightarrow B$ in $\Modl{\mathcal S}$, $\Ker f$, $\Coker f$ and $\Img f$ denote its kernel, cokernel and image in the category $\Modl{\mathcal S}$, respectively.

\begin{proposition}\label{p:CharacterizationEpimorphisms}
Let $f:F \rightarrow G$ be a morphism in $\Flatl{\mathcal S}$. Then:
\begin{enumerate}
\item $f$ is an epimorphism in $\Flatl{\mathcal S}$ if and only if $\Img f$ is $\tau$-dense in $G$.

\item $f$ is a monomorphism in $\Flatl{\mathcal S}$ if and only if $f$ is a monomorphism in $\Modl{\mathcal S}$.
\end{enumerate}
\end{proposition}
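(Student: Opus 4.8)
The plan is to translate both statements into the ambient category $\Modl{\mathcal S}$, where the kernel, cokernel and image of $f$ are computed, and then read off the answers from the definition of the torsion theory $\tau$. Throughout one uses that $\Flatl{\mathcal S}$ is a \emph{full} additive subcategory of $\Modl{\mathcal S}$, so that a morphism of $\Flatl{\mathcal S}$ is the same thing as a morphism of $\Modl{\mathcal S}$ between flat modules.

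\emph{Part (1).} By additivity, $f$ is an epimorphism in $\Flatl{\mathcal S}$ exactly when every morphism $g\colon G\to X$ with $X$ flat and $gf=0$ is zero. Now $gf=0$ amounts to $\Img f\leq\Ker g$, i.e. to $g$ factoring as $g=\bar g\,p$ through the canonical projection $p\colon G\to G/\Img f=\Coker f$, and $g=0$ iff $\bar g=0$ since $p$ is an epimorphism. Hence $f$ is an epimorphism in $\Flatl{\mathcal S}$ iff $\Hom(G/\Img f,X)=0$ for every flat module $X$, that is, iff $G/\Img f\in\mathcal T_\tau$, which is precisely the assertion that $\Img f$ is $\tau$-dense in $G$. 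This step is bookkeeping and presents no difficulty.

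\emph{Part (2).} If $f$ is a monomorphism in $\Modl{\mathcal S}$, then it is also one in the full subcategory $\Flatl{\mathcal S}$, so only the converse needs an argument. Suppose $f$ is a monomorphism in $\Flatl{\mathcal S}$ and let $K=\Ker f$ (taken in $\Modl{\mathcal S}$), with inclusion $\iota\colon K\hookrightarrow F$; we must show $K=0$. One cannot simply compare $\iota$ with the zero morphism $K\to F$, since $K$ need not be flat; instead, fix $a\in\mathcal S$ and a morphism $v\colon H_a\to K$. The composite $\iota v\colon H_a\to F$ satisfies $f(\iota v)=(f\iota)v=0=f\cdot 0$, and since $H_a$ and $F$ are flat (the representables $H_a$ being projective, hence flat) and $f$ is a monomorphism in $\Flatl{\mathcal S}$, we conclude $\iota v=0$, whence $v=0$ because $\iota$ is monic in $\Modl{\mathcal S}$. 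Thus $\Hom(H_a,K)=0$ for every $a\in\mathcal S$, and since the representable functors form a generating set of $\Modl{\mathcal S}$ this forces $K=0$; therefore $f$ is a monomorphism in $\Modl{\mathcal S}$.

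The one spot that needs an idea rather than a definition chase is in Part (2): a monomorphism in $\Flatl{\mathcal S}$ can only be tested against flat sources, and the kernel of a morphism between flat modules is in general not flat, so one has to detect triviality of the kernel through the flat (indeed projective) generators $H_a$ rather than through the kernel itself. Everything else is a direct transcription of the corresponding facts for modules over a ring.
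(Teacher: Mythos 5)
Your proposal is correct and follows essentially the same route as the paper: part (1) is the same cokernel/factorization argument identifying epimorphy with $\Hom(G/\Img f,X)=0$ for all flat $X$, and part (2) detects $\Ker f=0$ by testing against the flat (projective) representables, which is exactly the paper's device of precomposing with a projective presentation of the kernel. The only cosmetic difference is that you test each $H_a$ separately where the paper uses a single epimorphism $P\to\Ker f$ from a projective.
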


\begin{proof}
(1) Since the canonical projection $p:G \rightarrow \frac{G}{\Img f}$ is a cokernel of $f$ in $\Modl{\mathcal S}$, there is a bijective correspondence $\Gamma$ between the classes $\{g:G \rightarrow H \mid H \in \Flatl{\mathcal S} \textrm{ and }gf=0\}$ and $\left\{h:\frac{G}{\Img f}\rightarrow H \mid H \in \Flatl{\mathcal S}\right\}$ in such a way that $\Gamma(g)=0$ if and only if $g=0$. Consequently, $f$ is epic in $\Flatl{\mathcal S}$ if and only if the first class consists only of the zero map, if and only if, the same happens with the second class. But this is equivalent to $\Img f$ being $\tau$-dense.

(2) If $f$ is a monomorphism in $\Modl{\mathcal S}$, then it is clearly a monomorphism in $\Flatl{\mathcal S}$. Conversely, if $f$ is a monomorphism in $\Flatl{\mathcal S}$, take a projective presentation of $\Ker f$ in $\Modl{\mathcal S}$, $g:P \rightarrow \Ker f$, let $i:\Ker f \rightarrow F$ be the inclusion and note that $fig=0$. Since $ig$ is a morphism in $\Flatl{\mathcal S}$ and $f$ is monic (in $\Flatl{\mathcal S}$), $ig=0$, which implies that $g=0$. That is, $\Ker f=0$ and $f$ is a monomorphism in $\Modl{\mathcal S}$.
\end{proof}

Now we compute kernels and cokernels in $\Flatl{\mathcal S}$. Recall that flat covers exist in any module category over a ring \cite{BicanBashirEnochs}. The same happens in the functor category $\Modl{\mathcal S}$ by \cite[Corollary 2]{Rump} or \cite[Corollary 3.3]{CriveiPrestTorrecillas}.

\begin{proposition}\label{p:Kernels}
Let $f:F \rightarrow G$ be a morphism in $\Flatl{\mathcal S}$ and denote by $i$ the inclusion $\Ker f \hookrightarrow F$.
\begin{enumerate}
\item If $q:C \rightarrow  \Ker f$ is a flat precover of $\Ker f$, then $iq$ is a pseudokernel of $f$.

\item The morphism $f$ has a kernel in $\Flatl{\mathcal S}$ if and only if the kernel of $f$ in $\Modl{\mathcal S}$ belongs to $\Flatl{\mathcal S}$. In this case, a kernel of $f$ in $\Flatl{\mathcal S}$ is the inclusion $i:\Ker f \rightarrow F$.
\end{enumerate}
\end{proposition}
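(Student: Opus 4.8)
The plan is to treat the two parts in order, using part (1) inside the proof of part (2). For part (1), I would first note that $f(iq)=(fi)q=0$ since $fi=0$ by definition of $\Ker f$. To verify the weak universal property, take any $d\colon D\to F$ in $\Flatl{\mathcal S}$ with $fd=0$; in the Grothendieck category $\Modl{\mathcal S}$ it factors as $d=id'$ with $d'\colon D\to\Ker f$. Since $D$ is flat and $q$ is a flat precover of $\Ker f$, there is $d''\colon D\to C$ with $qd''=d'$, whence $(iq)d''=d$. Thus $iq$ is a pseudokernel; it is typically not a kernel because $q$ need not be monic, which is exactly why one only gets a weak kernel here.

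For part (2), the implication $(\Leftarrow)$ is immediate: if $\Ker f$ is flat then $i\colon\Ker f\to F$ is a morphism of $\Flatl{\mathcal S}$ with $fi=0$, and every $d\colon D\to F$ in $\Flatl{\mathcal S}$ with $fd=0$ factors uniquely through $\Ker f$ in $\Modl{\mathcal S}$, the factorization landing in $\Flatl{\mathcal S}$ because $\Ker f$ is flat and being unique because $i$ is monic; hence $i$ is a kernel in $\Flatl{\mathcal S}$. For $(\Rightarrow)$, suppose $k\colon K\to F$ is a kernel of $f$ in $\Flatl{\mathcal S}$, so $K$ is flat, $k$ is monic, and $fk=0$ gives $\Img k\subseteq\Ker f$ in $\Modl{\mathcal S}$. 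For the reverse inclusion I would use that a flat precover $q\colon C\to\Ker f$ is automatically epic: since the representable functors are projective generators of $\Modl{\mathcal S}$, there is an epimorphism onto $\Ker f$ from a projective (hence flat) module, and it factors through $q$. Then $iq\colon C\to F$ lies in $\Flatl{\mathcal S}$ and satisfies $f(iq)=0$, so the universal property of $k$ gives $g\colon C\to K$ with $kg=iq$, and therefore $\Ker f=\Img(iq)=\Img(kg)\subseteq\Img k$. Thus $\Img k=\Ker f$, and since $k$ is monic the induced morphism $K\to\Ker f$ is both monic and epic in $\Modl{\mathcal S}$, hence an isomorphism; so $\Ker f\cong K$ is flat, and the description of the kernel as $i$ follows from $(\Leftarrow)$.

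Everything here is routine diagram-chasing in the abelian category $\Modl{\mathcal S}$; the two points worth isolating are that flat precovers are epimorphisms (because projectives are flat) and the resulting identity $\Img(iq)=\Ker f$, which is what powers the inclusion $\Ker f\subseteq\Img k$. I do not expect any genuine obstacle.
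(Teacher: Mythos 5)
Your proof is correct, and parts (1) and the backward implication of (2) coincide with the paper's argument. The forward implication of (2) is where you genuinely diverge: the paper takes $q$ to be the flat \emph{cover} of $\Ker f$, produces morphisms $v\colon K\to C$ and $w\colon C\to K$ with $qvw=q$, and invokes the minimality of the cover to deduce that $vw$ is an isomorphism, hence that the corestriction $k'\colon K\to\Ker f$ is epic. You bypass minimality altogether: from a mere flat \emph{precover} $q$ (epic because projectives are flat and generate), the factorization $kg=iq$ immediately gives $\Ker f=\Img(iq)=\Img(kg)\subseteq\Img k$, and the reverse inclusion is free from $fk=0$, so $\Img k=\Ker f$. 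This is a shorter and more elementary route; it buys independence from the existence of flat covers (only precovers, which exist trivially since projectives are flat, are needed), whereas the paper's argument leans on the cover's uniqueness property without actually needing it. One point you state without justification is that $k$ is monic \emph{in $\Modl{\mathcal S}$}: being a kernel, $k$ is a priori only a monomorphism in $\Flatl{\mathcal S}$, and you need Proposition \ref{p:CharacterizationEpimorphisms}(2) to transfer this to $\Modl{\mathcal S}$ before concluding that $k'$ is both monic and epic, hence an isomorphism. The paper cites exactly this proposition at the corresponding step, so the gap is easily filled, but as written your argument silently conflates the two notions of monomorphism.
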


\begin{proof}
(1) Given $g:F' \rightarrow F$ in $\Flatl{\mathcal S}$ with $fg=0$, there exists $g':F' \rightarrow \Ker f$ with $ig'=g$. Since $q$ is a flat precover, there exists $h:F' \rightarrow C$ with $qh=g'$. Then $iqh=g$ which implies, since $g$ was arbitrary, that $iq$ is a pseudocokernel of $f$.

(2) Suppose that $k:K \rightarrow F$ is a kernel of $f$ in $\Flatl{\mathcal S}$. Since $\Ker f$ is the kernel of $f$ in $\Modl{\mathcal S}$, there exists a unique $k':K \rightarrow \Ker f$ such that $ik'=k$. Now take $q:C \rightarrow \Ker f$ the flat cover of $\Ker f$ in $\Modl{\mathcal S}$. Since $f
iq=0$, there exists $w:C \rightarrow K$ with $kw=iq$. Moreover, since $K$ is flat, there exists $v:K \rightarrow C$ such that $qv=k'$. 

Now notice that $iqvw=iq$ and, since $i$ is a monomorphism in $\Modl{\mathcal S}$, we get that $qvw=q$. Using that $q$ is a flat cover, we conclude that $vw$ is an isomorphism in $\Modl{\mathcal S}$ which implies that $v$ is an epimorphism in $\Modl{\mathcal S}$. Then $k'=qv$ is an epimorphisms in $\Modl{\mathcal S}$, since the flat precovers are epic.

Finally, $k$, being a kernel, is a monomorphism in $\Flatl{\mathcal S}$. By Proposition \ref{p:CharacterizationEpimorphisms}, $k$ is a monomorphism in $\Modl{\mathcal S}$, and so is $k'$, since $k=ik'$. The conclusion is that $k'$ is an isomorphism and that $\Ker f \in \Flatl{\mathcal S}$.

Conversely, if the kernel of $f$ (in $\Modl{\mathcal S}$) belongs to $\Flatl{\mathcal S}$, then it is the kernel of $f$ in $\Flatl{\mathcal S}$.
\end{proof}

This result has two immediate consequences. First, a criterion to check if a morphism is a kernel in $\Flatl{\mathcal S}$.

\begin{corollary}
Let $k:K \rightarrow F$ be a morphism in $\Modl{\mathcal S}$. Then $k$ is a kernel in $\Flatl{\mathcal S}$ if and only if $K$ and $F$ are flat, $k$ is monic in $\Modl {\mathcal S}$ and $\frac{F}{\Img k}$ embeds in a flat module.
\end{corollary}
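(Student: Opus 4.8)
The plan is to characterize when a morphism $k \colon K \to F$ in $\Modl{\mathcal S}$ is a kernel in $\Flatl{\mathcal S}$ by combining Proposition \ref{p:Kernels}(2) with Proposition \ref{p:CharacterizationEpimorphisms}. First suppose $k$ is a kernel in $\Flatl{\mathcal S}$ of some morphism $f' \colon F \to G'$ with $G'$ flat. Then $k$ is in particular a monomorphism in $\Flatl{\mathcal S}$, so by Proposition \ref{p:CharacterizationEpimorphisms}(2) it is monic in $\Modl{\mathcal S}$; and $K, F$ are flat by hypothesis. To identify $\Img k$, replace $f'$ by the morphism $f \colon F \to E(G')$ (composition of $f'$ with the inclusion of $G'$ into its injective envelope): since $G'$ is flat, this $f$ has the same kernel as $f'$ in $\Modl{\mathcal S}$, hence $k$ is still a kernel of $f$ in $\Flatl{\mathcal S}$. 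By Proposition \ref{p:Kernels}(2), the $\Modl{\mathcal S}$-kernel $\Ker f$ is flat and $k$ (up to the canonical iso $K \cong \Ker f$) is the inclusion $\Ker f \hookrightarrow F$; thus $\Img k = \Ker f$ and $F/\Img k = F/\Ker f$ embeds in $E(G')$ via the induced map. So $F/\Img k$ embeds in an injective module --- but I want it to embed in a \emph{flat} module. Here I would instead run the argument directly with $G' = G$ the original codomain: $F/\Ker f$ embeds in $G$, which is flat. This gives all four asserted conditions.

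For the converse, assume $K$ and $F$ are flat, $k$ is monic in $\Modl{\mathcal S}$, and $F/\Img k$ embeds in a flat module, say via $\iota \colon F/\Img k \hookrightarrow G$ with $G$ flat. Let $p \colon F \to F/\Img k$ be the canonical projection and set $f = \iota p \colon F \to G$, a morphism in $\Flatl{\mathcal S}$. Since $\iota$ is monic in $\Modl{\mathcal S}$, the kernel of $f$ in $\Modl{\mathcal S}$ is exactly $\Ker p = \Img k$, which is flat (it is isomorphic to $K$ via $k$). By Proposition \ref{p:Kernels}(2), $f$ has a kernel in $\Flatl{\mathcal S}$, namely the inclusion $\Img k \hookrightarrow F$; composing with the isomorphism $K \xrightarrow{\sim} \Img k$ induced by $k$ shows that $k$ itself is a kernel of $f$ in $\Flatl{\mathcal S}$.

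I expect the main (minor) subtlety to be bookkeeping the identification of $\Img k$ with $\Ker f$ under the canonical isomorphisms --- making sure that "$k$ is a kernel" really does force $\Img k$ to be the $\Modl{\mathcal S}$-kernel of the relevant morphism, rather than just some submodule with a flat quotient. This is handled by the uniqueness clause in Proposition \ref{p:Kernels}(2) (the kernel in $\Flatl{\mathcal S}$, when it exists, \emph{is} the inclusion $\Ker f \hookrightarrow F$), together with the fact that a kernel is determined up to a unique isomorphism of its domain. No genuine obstacle is anticipated; the corollary is essentially a repackaging of Proposition \ref{p:Kernels}(2) once one observes that the condition "$\Ker f \in \Flatl{\mathcal S}$ for some $f$ out of $F$" can be rephrased intrinsically as "some submodule of $F$ isomorphic to $K$ has flat quotient", and that the flat codomain can always be arranged.
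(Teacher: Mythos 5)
Your proof is correct and follows essentially the same route as the paper: the forward direction extracts monicity from Proposition \ref{p:CharacterizationEpimorphisms}, identifies $\Img k$ with $\Ker f$ via Proposition \ref{p:Kernels}(2), and embeds $F/\Img k \cong \Img f$ in the flat codomain; the converse builds $f=\iota p$ and applies Proposition \ref{p:Kernels}(2) exactly as the paper does. The brief detour through the injective envelope in the forward direction is unnecessary (and you rightly abandon it), but the final argument matches the paper's.
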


\begin{proof}
Suppose that $k$ is the kernel of a morphism $f:F \rightarrow G$ in $\Flatl{\mathcal S}$. Then $k$ is monic in $\Modl{\mathcal S}$ by Proposition \ref{p:CharacterizationEpimorphisms}, and $\Img k = \Ker f$ by Proposition \ref{p:Kernels}. Then $\frac{F}{\Img k} \cong \Img f$ embeds in a flat module.

Conversely, suppose that $K$ and $F$ are flat, $k$ is monic in $\Modl{\mathcal S}$ and there exists a monomorphism $u:\frac{F}{\Img k} \rightarrow G$ in $\Modl{\mathcal S}$ with $G$ a flat module. Then, a kernel in $\Modl{\mathcal S}$ of $up$, where $p$ is the canonical projection $F \rightarrow \frac{F}{\Img k}$, is $k$. But $up$ is a morphism in $\Flatl{\mathcal S}$, so that $k$ is actually a kernel in $\Flatl{\mathcal S}$.
\end{proof}

The second consequence is the characterization of when $\Flatl{\mathcal S}$ has kernels.

\begin{corollary}\label{c:ExistenceKernels}
The following assertions are equivalent for the small preadditive category $\mathcal S$:
\begin{enumerate}
\item $\Flatl{\mathcal S}$ has kernels.

\item $\Flatl{\mathcal S}$ is closed under kernels.

\item $\Modl{\mathcal S}$ has weak dimension less than or equal to $2$.
\end{enumerate}
\end{corollary}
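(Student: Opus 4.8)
The plan is to route $(1)\Leftrightarrow(2)$ through Proposition \ref{p:Kernels}(2) and to settle $(2)\Leftrightarrow(3)$ by elementary dimension shifting; none of the steps should be hard, so the argument is mostly bookkeeping. For $(1)\Leftrightarrow(2)$ the content is entirely Proposition \ref{p:Kernels}(2): a morphism $f\colon F\to G$ between flat modules admits a kernel in $\Flatl{\mathcal S}$ exactly when $\Ker f$, computed in $\Modl{\mathcal S}$, is flat, in which case the inclusion $\Ker f\hookrightarrow F$ is that kernel. Hence ``$\Flatl{\mathcal S}$ has kernels'', i.e.\ every such $f$ has a kernel in $\Flatl{\mathcal S}$, is literally the same assertion as ``$\Ker f$ is flat for every morphism $f$ between flat modules'', i.e.\ ``$\Flatl{\mathcal S}$ is closed under kernels''.

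For $(2)\Rightarrow(3)$, I would take an arbitrary module $M$ and a projective resolution $\cdots\to P_2\xrightarrow{d_2}P_1\xrightarrow{d_1}P_0\to M\to 0$ in $\Modl{\mathcal S}$. The second syzygy $\Ker d_1$ is the kernel of the morphism $d_1\colon P_1\to P_0$ between projective---hence flat---modules, so by $(2)$ it is flat; therefore $\fd M\le 2$. Since $M$ was arbitrary, $\Modl{\mathcal S}$ has weak dimension at most $2$.

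For $(3)\Rightarrow(2)$, given $f\colon F\to G$ in $\Flatl{\mathcal S}$ I would use the two short exact sequences $0\to\Img f\to G\to G/\Img f\to 0$ and $0\to\Ker f\to F\to\Img f\to 0$. The first, with $G$ flat and $\fd(G/\Img f)\le 2$, gives $\Tor_2(\Img f,-)\cong\Tor_3(G/\Img f,-)=0$, so $\fd(\Img f)\le 1$; the second, with $F$ flat, then gives $\Tor_1(\Ker f,-)\cong\Tor_2(\Img f,-)=0$, so $\Ker f$ is flat, which is $(2)$.

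I expect no genuine obstacle. The only point worth a sentence of care is that the $\Tor$ dimension-shifting isomorphisms, and the computation of $\fd M$ from a single projective resolution, are legitimate in the functor category $\Modl{\mathcal S}$; this is exactly what is granted by the convention, recalled just before Section \ref{s:TorsionTheories}, that flat dimension and weak dimension in $\Modl{\mathcal S}$ are defined and behave as in the case of rings.
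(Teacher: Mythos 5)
Your proposal is correct and follows essentially the same route as the paper: $(1)\Leftrightarrow(2)$ via Proposition \ref{p:Kernels}, $(2)\Rightarrow(3)$ by noting the second syzygy in a resolution by flat modules is a kernel of a morphism between flats, and $(3)\Rightarrow(2)$ by observing $\Coker f$ has flat dimension at most $2$ and dimension-shifting back to conclude $\Ker f$ is flat. The only cosmetic differences are your use of a projective rather than a flat resolution (immaterial, since projectives are flat) and your explicit write-out of the $\Tor$ dimension shift that the paper leaves implicit.
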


\begin{proof}
(1) $\Leftrightarrow$ (2). Follows from Proposition \ref{p:Kernels}.

(2) $\Rightarrow$ (3). For any module $M$, any flat resolution,
\begin{displaymath}
\begin{tikzcd}
\cdots \arrow{r} & F_1 \arrow{r}{f_1} & F_0 \arrow{r}{f_0} & M \arrow{r} & 0,
\end{tikzcd}
\end{displaymath}
satisfies that $\Ker f_1$ is flat by (2). This means that the flat dimension of $M$ is less than or equal to $2$.

(3) $\Rightarrow$ (2). Given any morphism $f$ in $\Flatl{\mathcal S}$, $\Coker f$ has flat dimension less than or equal to $2$. This implies that $\Ker f$ is flat, so that (2) holds.
\end{proof}

The situation with cokernels is different. First, there might not be flat preenvelopes in $\Modl{\mathcal S}$ unless $\Modr{\mathcal S}$ is locally coherent by Corollary \ref{c:CoherentCategories}. Second, flat preenvelopes might not be monomorphisms. As we will see, cokernels are related with flat reflections. We start characterizing flat reflections in terms of $\tau$-dense submodules. We call a submodule $K$ of a flat module $F$ \textit{strongly $\tau$-dense} if it is $\tau$-dense and the inclusion $K \hookrightarrow F$ is a flat preenvelope.

\begin{lemma}\label{l:Reflection}
Let $f:M \rightarrow F$ be a morphism with $F$ flat. Then the following assertions are equivalent:
\begin{enumerate}
	\item $f$ is a flat reflection.
	
	\item $f$ is a flat preenvelope and $\Img f$ is $\tau$-dense in $F$.
	
	\item $\Hom(\overline f,F')$ is an epimorphism for every flat module $F'$ (where $\overline f:M \rightarrow \Img f$ is the canonical morphism) and $\Img f$ is strongly $\tau$-dense in $F$.
\end{enumerate}
\end{lemma}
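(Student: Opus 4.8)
The plan is to establish the two equivalences $(1)\Leftrightarrow(2)$ and $(2)\Leftrightarrow(3)$; once the right remarks are in place everything is formal diagram chasing. Three things are used throughout. First, since $\tau$ is cogenerated by $\Flatl{\mathcal S}$, every flat module lies in $\mathcal{F}_\tau$ — directly from the definition of $\mathcal{T}_\tau$, since $\Hom(Y,F')=0$ for $Y\in\mathcal{T}_\tau$ and $F'$ flat — so $\Hom(T,F')=0$ whenever $T$ is $\tau$-torsion and $F'$ is flat. Second, $f$ factors canonically as $f=\iota\,\overline f$ with $\overline f\colon M\to\Img f$ an epimorphism and $\iota\colon\Img f\hookrightarrow F$ a monomorphism, and $p\colon F\to F/\Img f$ is the cokernel of $\iota$, so $pf=0$. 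Third — the conceptual core — the \emph{existence} of a factorization through $f$ (resp.\ through $\iota$, resp.\ through $\overline f$) against flat targets is exactly the relevant preenvelope/surjectivity statement, while the \emph{uniqueness} of such a factorization is exactly the vanishing of $\Hom(F/\Img f,F')$ for flat $F'$, which by the first remark is exactly the $\tau$-density of $\Img f$ in $F$.

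For $(1)\Leftrightarrow(2)$: a reflection is in particular a preenvelope, and for the $\tau$-density of $\Img f$ take flat $F'$ and $g\colon F/\Img f\to F'$; then $gp\in\Hom(F,F')$ has $(gp)f=g(pf)=0=0\cdot f$, and monicity of $\Hom(f,F')$ (it is an isomorphism) gives $gp=0$, hence $g=0$ since $p$ is epic, so $F/\Img f\in\mathcal{T}_\tau$. Conversely, assume $(2)$; being a preenvelope, $\Hom(f,F')$ is surjective for flat $F'$, and for injectivity suppose $h_1f=h_2f$ and set $h=h_1-h_2$: from $h\iota\,\overline f=hf=0$ and $\overline f$ epic we get $h\iota=0$, so $h$ factors through the cokernel $p$; since $F/\Img f\in\mathcal{T}_\tau$ and $F'\in\mathcal{F}_\tau$ that factor vanishes, so $h=0$. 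Thus $\Hom(f,F')$ is an isomorphism and $f$ is a flat reflection.

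For $(2)\Leftrightarrow(3)$: both conditions contain the $\tau$-density of $\Img f$, so only the factorization clauses differ. Assuming $(2)$: given flat $F'$ and $g\colon M\to F'$, the preenvelope property of $f$ yields $h\colon F\to F'$ with $hf=g$, and then $h\iota\colon\Img f\to F'$ satisfies $(h\iota)\overline f=g$, so $\Hom(\overline f,F')$ is epic; and given $g'\colon\Img f\to F'$ with $F'$ flat, apply the preenvelope property of $f$ to $g'\overline f$ to get $h$ with $h\iota\,\overline f=hf=g'\overline f$, then cancel the epimorphism $\overline f$ to obtain $h\iota=g'$, so $\iota$ is a flat preenvelope and $\Img f$ is strongly $\tau$-dense. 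Conversely, assuming $(3)$, for flat $F'$ and $g\colon M\to F'$ factor $g=g'\overline f$ (as $\Hom(\overline f,F')$ is epic) and then lift $g'$ to $h\colon F\to F'$ with $h\iota=g'$ (as $\iota$ is a flat preenvelope), so $hf=h\iota\,\overline f=g$; hence $f$ is a flat preenvelope, which with the $\tau$-density already present gives $(2)$.

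I do not expect a genuine obstacle: the proof is purely formal. The one point demanding care is bookkeeping — consistently separating, in each of the three conditions, the ``a lift exists'' half (which one reads off from the preenvelope hypotheses, or from surjectivity of $\Hom(\overline f,-)$) from the ``the lift is unique'' half, which is exactly where, and only where, the $\tau$-density of $\Img f$ together with flat modules being $\tau$-torsion-free gets used.
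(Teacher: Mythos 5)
Your proof is correct and follows essentially the same route as the paper: both split $f$ as $\iota\overline f$, identify "reflection = preenvelope + $\Hom(f,F')$ monic" with the monicity condition translated into $\tau$-density of $\Img f$ via the definition of $\mathcal T_\tau$, and derive $(2)\Leftrightarrow(3)$ from the factorization $\Hom(f,F')=\Hom(\overline f,F')\circ\Hom(\iota,F')$. You merely spell out the cancellation arguments that the paper leaves as "very easy to see".
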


\begin{proof}
	(1) $\Leftrightarrow$ (2). It is clear since $f$ is a flat reflection if and only if $f$ is a flat preenvelope and $\Hom(f,F')$ is monic for every flat module $F'$. But this is equivalent to $F/\Img f$ being $\tau$-torsion.
	
	(2) $\Leftrightarrow$ (3). If $i:\Img f \rightarrow F$ is the inclusion, then 
	\begin{displaymath}
		\Hom(f,F')=\Hom(\overline f,F') \circ \Hom(i,F')
	\end{displaymath}
	from which it is very easy to see that $f$ is a flat preenvelope if and only if $i$ is a flat preenvelope and $\Hom(\overline f,F')$ is an epimorphism. From this fact it follows the equivalence (2) $\Leftrightarrow$ (3).
\end{proof}

Now we study pseudocokernels and cokernels in $\Flatl{\mathcal S}$.

\begin{lemma}\label{l:Cokernels}
Let $f:F \rightarrow G$ be a morphism in $\Flatl{\mathcal S}$ and denote by $p:G \rightarrow \frac{G}{\Img f}$ the canonical projection.
\begin{enumerate}
\item If $e:\frac{G}{\Img f} \rightarrow H$ is a flat preenvelope of $\frac{G}{\Img f}$, then $ep$ is a pseudocokernel of $f$.

\item If $c:G \rightarrow H$ is a cokernel of $f$ in $\Flatl{\mathcal S}$, then the unique morphism $h:\frac{G}{\Img f} \rightarrow H$ such that $hp=c$ is a flat reflection of $\frac{G}{\Img f}$.

\item If $h:\frac{G}{\Img f} \rightarrow H$ is a flat reflection of $\frac{G}{\Img f}$, then $hp$ is a cokernel of $f$ in $\Flatl{\mathcal S}$.
\end{enumerate}
\end{lemma}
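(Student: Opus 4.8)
The backbone of all three parts is that the canonical projection $p\colon G\to G/\Img f$ is a cokernel of $f$ in the ambient Grothendieck category $\Modl{\mathcal S}$, and in particular an epimorphism there; hence for any module $H'$ a morphism $d\colon G\to H'$ satisfies $df=0$ if and only if it factors, then necessarily uniquely, as $d=d'p$. The plan is to combine this with the universal property of $e$ (every morphism from $G/\Img f$ to a flat module factors through the flat preenvelope $e$) in part (1), and with the universal property of the flat reflection $h$ (every such morphism factors \emph{uniquely} through $h$, equivalently $\Hom(h,F')$ is bijective for flat $F'$) in parts (2) and (3).

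For (1), since $epf=e(pf)=0$ only the weak universal property remains: given $d\colon G\to H'$ with $H'$ flat and $df=0$, factor $d=d'p$ through the cokernel $p$ and then $d'=d''e$ through the flat preenvelope $e$, so that $d''(ep)=d$. No uniqueness is asserted, which is exactly why $e$ being merely a preenvelope, not a reflection, suffices here.

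For (2), from $cf=0$ and $p$ epic we get the unique $h$ with $hp=c$, and $H$ is flat since $c$ is a morphism of $\Flatl{\mathcal S}$. To see that $h$ is a flat reflection I would check that $\Hom(h,F')$ is bijective for every flat $F'$. Surjectivity: for $g\colon G/\Img f\to F'$ the composite $gp$ kills $f$, so the cokernel property of $c$ supplies $g'\colon H\to F'$ with $g'c=gp$, i.e.\ $(g'h)p=gp$, and cancelling the epimorphism $p$ gives $g'h=g$. Injectivity: a cokernel is automatically an epimorphism (apply the uniqueness clause of the cokernel property to $g_1c=g_2c$, whose composite with $f$ vanishes), so $g_1h=g_2h$ yields $g_1c=g_2c$ and hence $g_1=g_2$. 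For (3), given a flat reflection $h\colon G/\Img f\to H$, the target $H$ is flat and $(hp)f=0$; for $d\colon G\to H'$ with $H'$ flat and $df=0$, write $d=d'p$ and use that $\Hom(h,H')$ is an isomorphism to obtain the unique $d''\colon H\to H'$ with $d''h=d'$, so $d''(hp)=d$; uniqueness of $d''$ together with $p$ epic gives uniqueness of the factorization through $hp$, so $hp$ is a cokernel of $f$ in $\Flatl{\mathcal S}$.

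The argument is essentially bookkeeping around these universal properties; the only point needing a little care is the injectivity half of (2), where one must recall that a cokernel in an additive category is an epimorphism rather than trying to read this off from $h$ directly.
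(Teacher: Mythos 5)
Your proof is correct and follows essentially the same route as the paper: factor everything through the cokernel $p$ in $\Modl{\mathcal S}$ and then invoke the universal property of the flat preenvelope (resp.\ flat reflection). The only cosmetic difference is in part (2), where the paper verifies that $h$ is a reflection via Lemma \ref{l:Reflection} (flat preenvelope with $\tau$-dense image, using Proposition \ref{p:CharacterizationEpimorphisms}), whereas you check bijectivity of $\Hom(h,F')$ directly from the definition, using that the cokernel $c$ is an epimorphism in $\Flatl{\mathcal S}$ for injectivity; both verifications are valid.
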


\begin{proof}
(1) The proof is dual to the one of (1) of Proposition \ref{p:Kernels}.

(2) Since $c$ is an epimorphism in $\Flatl{\mathcal S}$ and $hp=c$, $h$ is an epimorphism in $\Flatl{\mathcal S}$. By Proposition \ref{p:CharacterizationEpimorphisms}, $\Img h$ is $\tau$-dense in $H$. 

Now we see that $h$ is a flat preenvelope. Given any $g:\frac{G}{\Img f} \rightarrow M$ with $M \in \Flatl{\mathcal S}$, since $gpf=0$, we can find $h':H \rightarrow M$ with $h'c=gp$. Then $h'hp=gp$ and, as $p$ is an epimorphism in $\Modl{\mathcal{S}}$, $h'h=g$. Then $h$ is a flat reflection by Lemma \ref{l:Reflection}.

(3) Given $g:G \rightarrow M$ a morphism in $\Flatl{\mathcal S}$ with $gf=0$, we can find a unique $g_1:\frac{G}{\Img f}\rightarrow M$ satisfying $g_1p=g$, since $p$ is a cokernel of $f$ in $\Modl{\mathcal S}$, and a unique $g_2:H \rightarrow M$ satisfying $g_2h=g_1$, since $h$ is a flat reflection. It is easy to see that $g_2$ is unique satisfying $g_2hp=g$, so that $hp$ is a cokernel of $f$ in $\Flatl{\mathcal S}$.
\end{proof}

Again, we obtain two consequences of this result. First, a criterion to determine when a morphism is a cokernel in $\Flatl{\mathcal S}$.

\begin{proposition}\label{p:CharacterizationCokernels}
Let $c:F \rightarrow C$ be a morphism in $\Modl{\mathcal S}$. Then $c$ is a cokernel in $\Flatl{\mathcal S}$ if and only if $F$ and $C$ are flat and $\Img c$ is a strongly $\tau$-dense submodule of $C$.
\end{proposition}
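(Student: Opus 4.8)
The plan is to read this off from Lemma~\ref{l:Cokernels} together with the description of flat reflections in Lemma~\ref{l:Reflection}, using that every module in $\Modl{\mathcal S}$ has a flat (pre)cover.

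For the ``only if'' direction, suppose $c\colon F\to C$ is a cokernel in $\Flatl{\mathcal S}$ of some morphism $f\colon F'\to F$ of $\Flatl{\mathcal S}$. Then both $F$ and $C$ are flat, being objects of $\Flatl{\mathcal S}$ (cokernels in $\Flatl{\mathcal S}$ are formed in $\Flatl{\mathcal S}$). Writing $p\colon F\to F/\Img f$ for the canonical projection, Lemma~\ref{l:Cokernels}(2) gives that the unique $h\colon F/\Img f\to C$ with $hp=c$ is a flat reflection of $F/\Img f$. Since $p$ is surjective, $\Img c=\Img h$, and by the implication (1)$\Rightarrow$(3) of Lemma~\ref{l:Reflection} this submodule $\Img h=\Img c$ is strongly $\tau$-dense in $C$, which is what we want.

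For the ``if'' direction, assume $F$ and $C$ are flat and $\Img c$ is strongly $\tau$-dense in $C$. Let $i\colon\Ker c\hookrightarrow F$ be the inclusion and choose a flat precover $q\colon C'\to\Ker c$; it is an epimorphism, since the representable (hence flat) modules generate $\Modl{\mathcal S}$. Then $iq\colon C'\to F$ is a morphism of $\Flatl{\mathcal S}$ with $\Img(iq)=\Ker c$, so the canonical projection $p\colon F\to F/\Img(iq)=F/\Ker c$ is, through the canonical isomorphism $F/\Ker c\cong\Img c$, nothing but $c$ regarded as a surjection onto its image. Hence the unique $h\colon F/\Ker c\to C$ with $hp=c$ is the inclusion $\Img c\hookrightarrow C$ composed with that isomorphism; since $\Img c$ is strongly $\tau$-dense, $h$ is a flat preenvelope whose image is $\tau$-dense in $C$, so $h$ is a flat reflection of $F/\Img(iq)$ by the implication (2)$\Rightarrow$(1) of Lemma~\ref{l:Reflection}. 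By Lemma~\ref{l:Cokernels}(3), $c=hp$ is then a cokernel of $iq$ in $\Flatl{\mathcal S}$.

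Once the two lemmas are in place the argument is essentially bookkeeping, so I do not anticipate a genuine obstacle; the one point that needs a little care is that in order to apply Lemma~\ref{l:Cokernels} one must exhibit the quotient $F/\Ker c$ as $F/\Img g$ for an honest morphism $g$ of $\Flatl{\mathcal S}$, which forces the passage to a flat precover of $\Ker c$ (the kernel itself need not be flat), together with the trivial but necessary remark that precomposing a flat preenvelope with an isomorphism again yields a flat preenvelope.
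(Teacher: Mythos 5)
Your proof is correct and follows essentially the same route as the paper: both directions reduce to Lemma \ref{l:Cokernels} combined with the characterization of flat reflections in Lemma \ref{l:Reflection}, with the quotient $F/\Ker c$ realized as $F/\Img(iq)$ via an epimorphism from a flat module onto $\Ker c$. The only cosmetic difference is that you verify the reflection property via condition (2) of Lemma \ref{l:Reflection} (preenvelope plus $\tau$-dense image) where the paper invokes condition (3); both are equally immediate here since the parallel morphism $\overline c$ is an isomorphism.
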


\begin{proof}
If $c$ is the cokernel in $\Flatl{\mathcal S}$ of a morphism $f:G \rightarrow F$ and we denote by $p:F \rightarrow \frac{F}{\Img f}$ the canonical morphism, then the unique morphisms $h:\frac{F}{\Img f} \rightarrow C$ satisfying $hp=c$ is a flat reflection by the previous lemma. By Lemma \ref{l:Reflection}, $\Img c = \Img h$ is strongly $\tau$-dense in $C$.

Conversely, take an epimorphism in $\Modl{\mathcal S}$, $f:G \rightarrow \Ker c$ with $G$ a flat module. Let $\overline c:\frac{F}{\Ker c} \rightarrow \Img c$ be the canonical isomorphism, and $k:\Ker c \rightarrow F$ and $i:\Img c \rightarrow C$ the inclusions.. We prove that $i\overline c$ is a flat reflection, since this implies that $i\overline c p = c$ is a cokernel of $kf$ in $\Modl{\mathcal S}$ by Lemma \ref{l:Cokernels}. But this follows directly from Lemma \ref{l:Reflection}(3).
\end{proof}

As a second consequence, we characterize when $\Modl{\mathcal S}$ has cokernels using a recent characterization of reflective subcategories given in \cite{CortesCriveiSaorin}. The remarkable fact here is that if $\Flatl{\mathcal S}$ has cokernels, then it is automatically preabelian, i.e., it has kernels.

\begin{theorem}\label{t:Preabelian}
The following are equivalent for the small preadditive category $\mathcal S$.
\begin{enumerate}
\item $\Flatl{\mathcal S}$ has cokernels.

\item $\Flatl{\mathcal S}$ is a reflective subcategory of $\Modl{\mathcal S}$.

\item $\Flatl{\mathcal S}$ is preenveloping and closed under kernels.

\item $\Modr{\mathcal S}$ is locally coherent and $\Modl{\mathcal S}$ has weak dimension less than or equal to $2$.

\item $\Flatl{\mathcal S}$ is preabelian.
\end{enumerate}
\end{theorem}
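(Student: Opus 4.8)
The plan is to close the cycle $(1)\Rightarrow(2)\Rightarrow(5)\Rightarrow(1)$ and then to attach $(3)$ and $(4)$ through $(2)\Rightarrow(3)\Rightarrow(2)$ together with the equivalence $(3)\Leftrightarrow(4)$. Two of these links are essentially free: $(5)\Rightarrow(1)$ holds because a preabelian category has, by definition, cokernels; and $(3)\Leftrightarrow(4)$ is just Corollary \ref{c:CoherentCategories} (``$\Flatl{\mathcal S}$ preenveloping'' $\Leftrightarrow$ ``$\Modr{\mathcal S}$ locally coherent'') combined with Corollary \ref{c:ExistenceKernels} (``$\Flatl{\mathcal S}$ closed under kernels'' $\Leftrightarrow$ ``$\wgldim\Modl{\mathcal S}\le 2$''). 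So the real content sits in $(1)\Rightarrow(2)$, $(2)\Rightarrow(5)$, $(2)\Rightarrow(3)$ and $(3)\Rightarrow(2)$.

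For $(1)\Rightarrow(2)$ I would show that every module has a flat reflection, which is precisely the assertion that the inclusion $\Flatl{\mathcal S}\hookrightarrow\Modl{\mathcal S}$ has a left adjoint. Fix a module $M$ and choose a presentation $F_1\xrightarrow{f_1}F_0\to M\to 0$ by free (hence flat) modules, possible because the representable functors form a set of projective generators of $\Modl{\mathcal S}$; then $M\cong F_0/\Img f_1$. By $(1)$ the morphism $f_1\colon F_1\to F_0$ has a cokernel $c\colon F_0\to H$ in $\Flatl{\mathcal S}$, and Lemma \ref{l:Cokernels}(2) identifies the induced map $h\colon F_0/\Img f_1\cong M\to H$ (the unique one with $hp=c$, where $p$ is the projection) as a \emph{flat reflection} of $M$. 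Since $M$ was arbitrary, $\Flatl{\mathcal S}$ is reflective in $\Modl{\mathcal S}$. This is also the point at which the characterisation of reflective subcategories from \cite{CortesCriveiSaorin} can be invoked, using the observation that every object of $\Modl{\mathcal S}$ is a cokernel (in $\Modl{\mathcal S}$) of a morphism between flat modules.

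For $(2)\Rightarrow(5)$ and $(2)\Rightarrow(3)$ I would use that $\Modl{\mathcal S}$ is a Grothendieck category, hence bicomplete, and that a replete reflective subcategory of a complete category is closed under every limit computed in the ambient category: the inclusion, being a right adjoint, preserves limits, and a short repletion argument places the ambient limit of a diagram in $\Flatl{\mathcal S}$ back inside $\Flatl{\mathcal S}$. In particular $\Flatl{\mathcal S}$ has kernels, and, being additive with both kernels and cokernels (the latter obtained by applying the reflector to cokernels in $\Modl{\mathcal S}$, since left adjoints preserve colimits), it is preabelian; moreover it is preenveloping because reflections are preenvelopes. For the return arrow $(3)\Rightarrow(2)$ I would apply the General Adjoint Functor Theorem to the inclusion $\Flatl{\mathcal S}\hookrightarrow\Modl{\mathcal S}$ (or, equivalently, \cite{CortesCriveiSaorin}): $\Flatl{\mathcal S}$ is closed under direct summands, so by the standard fact used in Corollary \ref{c:CoherentCategories} preenveloping forces closure under direct products; together with closure under kernels this yields all limits inside $\Modl{\mathcal S}$, so $\Flatl{\mathcal S}$ is complete and the inclusion preserves limits, while a flat preenvelope $M\to H$ provides a one-element solution set at each $M$. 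Hence the inclusion has a left adjoint.

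The step I expect to be the crux is the half-loop $(1)\Rightarrow(2)\Rightarrow(5)$, that is, the ``remarkable fact'' that the mere existence of cokernels in $\Flatl{\mathcal S}$ forces the existence of kernels: all the weight is carried by Lemma \ref{l:Cokernels}(2), which converts ``$\Flatl{\mathcal S}$ has cokernels'' into ``$\Flatl{\mathcal S}$ is reflective'', after which closure under limits — hence existence of kernels — is purely formal. A point to handle with care is that ``preenvelope'' is meant in the Enochs sense, so a flat preenvelope need not be a flat reflection — by Lemma \ref{l:Reflection} it is one exactly when its image is $\tau$-dense in its (flat) target — which is precisely why $(3)$ must separately demand closure under kernels and is not a consequence of ``preenveloping'' alone, and why $(3)\Leftrightarrow(4)$ does not collapse into an identification of local coherence with $\wgldim\le 2$.
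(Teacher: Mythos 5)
Your proof is correct, and the core of it --- the implication $(1)\Rightarrow(2)$ via a flat presentation $F_1\to F_0\to M\to 0$ and Lemma \ref{l:Cokernels}(2), together with $(3)\Leftrightarrow(4)$ from Corollaries \ref{c:CoherentCategories} and \ref{c:ExistenceKernels} --- is exactly what the paper does. Where you diverge is in the remaining links. The paper closes $(2)\Rightarrow(1)$ in one line from Lemma \ref{l:Cokernels}(3) (compose the projection $G\to G/\Img f$ with a flat reflection of $G/\Img f$ to get a cokernel), and it outsources the whole equivalence $(2)\Leftrightarrow(3)$ to \cite[Corollary 7.2]{CortesCriveiSaorin}, finally deducing $(5)$ from $(1)$ together with the closure under kernels in $(3)$. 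You instead route $(2)\Rightarrow(5)\Rightarrow(1)$ through the general fact that a replete full reflective subcategory of a complete category is closed under ambient limits (giving kernels) and that the reflector produces cokernels, and you re-prove $(3)\Rightarrow(2)$ by hand via the General Adjoint Functor Theorem, using that a preenveloping class closed under direct summands is closed under products (the same fact the paper uses in Corollary \ref{c:CoherentCategories}) so that $\Flatl{\mathcal S}$ is closed under all limits, with the preenvelope serving as a one-object solution set. Both routes are sound; yours is more self-contained (it removes the dependence on \cite{CortesCriveiSaorin} for this theorem) at the cost of invoking somewhat heavier general category theory, while the paper's is shorter and keeps all the work localized in Lemma \ref{l:Cokernels} and the cited reference. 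Your closing remark correctly identifies the one point that genuinely needs care: a flat preenvelope need not be a reflection (Lemma \ref{l:Reflection}), which is why $(3)$ must impose closure under kernels separately and why the equivalence with $(2)$ is not automatic from preenveloping alone.
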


\begin{proof}
(1) $\Rightarrow$ (2). For any module $M$ we can find an exact sequence in $\Modl{\mathcal S}$,
\begin{displaymath}
\begin{tikzcd}
F_1 \arrow{r}{f_1} & F_0 \arrow{r}{f_0} & M \arrow{r} & 0,
\end{tikzcd}
\end{displaymath}
with $F_0, F_1 \in \Flatl{\mathcal S}$; since $f_1$ has a cokernel in $\Flatl{\mathcal S}$, and $f_0$ is a cokernel of $f_1$ in $\Modl{\mathcal S}$, $M$ has a flat reflection by Lemma \ref{l:Cokernels}. This means that $\Flatl{\mathcal S}$ is a reflective subcategory of $\Modl{\mathcal S}$. 

(2) $\Rightarrow$ (1). Follows from Lemma \ref{l:Cokernels}.

(2) $\Leftrightarrow$ (3). By \cite[Corollary 7.2]{CortesCriveiSaorin}.

(3) $\Leftrightarrow$ (4). By Corollaries \ref{c:CoherentCategories} and \ref{c:ExistenceKernels}.

(1) and (3) $\Rightarrow$ (5) $\Rightarrow$ (1) are trivial.
\end{proof}

Notice that, as we mentioned before, $\Flatl{\mathcal S}$ might have cokernels but it might not be closed under cokernels. Actually, since every module is the cokernel in $\Modl{\mathcal S}$ of a morphism in $\Flatl{\mathcal S}$, $\Flatl{\mathcal S}$ is closed under cokernels if and only if $\Flatl{\mathcal S}=\Modl{\mathcal S}$.

Now we characterize when $\Flatl{\mathcal S}$ is abelian. Recall that a preabelian category $\mathcal A$ is \textit{abelian} if for any morphism $f$, the canonical morphism $\overline f:\Coim f \rightarrow \Img f$ (sometimes called the \textit{parallel} of $f$ \cite[p. 24]{Popescu}) is an isomorphism, where $\Coim f$ is the cokernel of the kernel of $f$ and $\Img f$ is the kernel of the cokernel of $f$. We will use the following observation:

\begin{lemma}\label{l:RejFlatDenseSubmodules}
	Let $K$ be a submodule of a flat module $G$ and suppose that $K \hookrightarrow \rej_{\Flatl{\mathcal S}}^K(G)$ is a flat reflection. If $K$ is flat, then $\rej_{\Flatl{\mathcal S}}^K(G)=K$.
\end{lemma}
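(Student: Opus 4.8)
The plan is to deduce the equality from the uniqueness of reflections, exploiting that the hypothesis ``$K$ is flat'' makes the identity $\id_K$ a flat reflection of $K$.

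First I would set $R := \rej_{\Flatl{\mathcal S}}^K(G)$. Being an intersection of kernels of morphisms out of $G$, each of which contains $K$ in its kernel, $R$ satisfies $K \leq R \leq G$, so the inclusion $\iota : K \hookrightarrow R$ is a well-defined morphism which, by hypothesis, is a flat reflection of $K$. In particular $R$ is a flat module, since the codomain of a $\Flatl{\mathcal S}$-reflection lies in $\Flatl{\mathcal S}$. So we now have two flat modules $K$ and $R$ and a monomorphism $\iota$ between them which is a flat reflection of $K$.

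Next I would feed the test object $C' = K$ into the defining property of the reflection $\iota$ (legitimate because $K$ is flat): the precomposition map $\Hom(\iota,K) : \Hom(R,K) \to \Hom(K,K)$ is an isomorphism, hence there is $r : R \to K$ with $r\iota = \id_K$. Then I would feed in $C' = R$ (legitimate because $R$ is flat): $\Hom(\iota,R) : \Hom(R,R) \to \Hom(K,R)$ is an isomorphism, in particular injective; since $(\iota r)\iota = \iota(r\iota) = \iota = \id_R\,\iota$, this forces $\iota r = \id_R$. Thus $\iota$ is an isomorphism with inverse $r$, and as $\iota$ is the inclusion of the submodule $K$ into $R$ we conclude $K = R = \rej_{\Flatl{\mathcal S}}^K(G)$.

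The argument is short enough that there is no genuine obstacle; it is really just the standard fact that a reflection which admits a retraction is an isomorphism. The only points requiring attention are that the reflection property of $\iota$ may indeed be invoked against both test objects $K$ and $R$ — the former because $K$ is flat by hypothesis, the latter because codomains of flat reflections are flat — and that one must use injectivity (not just surjectivity) of $\Hom(\iota,R)$ to conclude $\iota r = \id_R$ from $(\iota r)\iota = \id_R\,\iota$.
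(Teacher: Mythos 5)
Your proof is correct and is essentially the paper's argument: the paper disposes of this case in one line by ``the minimality of the flat reflection of a module,'' and your retraction/section computation with the two test objects $K$ and $R$ is exactly the standard unwinding of that minimality (a reflection of an object already in the class is an isomorphism). No gaps.
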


\begin{proof}
	If $K$ is flat, then $K=\rej_{\Flatl{\mathcal S}}^K(G)$ by the minimality of the flat reflection of a module. If $K$ is $\tau$-dense, then $\rej_{\Flatl{\mathcal S}}(G/K)=G/K$, so that $\rej_{\Flatl{\mathcal S}}^K(G)=G$.
\end{proof}

\begin{theorem}\label{t:Panoramic}
The category $\Flatl{\mathcal S}$ is abelian if and only if  $\Modr{\mathcal S}$ is locally coherent and for any submodule $K$ of a flat module $G$, the inclusion $K \hookrightarrow \rej_{\Flatl{\mathcal S}}^K(G)$ is a flat reflection of $K$.
\end{theorem}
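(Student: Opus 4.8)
The plan is to reduce abelianness of $\Flatl{\mathcal S}$ to a statement about the parallel morphism $\overline f$ of an arbitrary morphism $f$ in $\Flatl{\mathcal S}$, using the explicit descriptions of kernels and cokernels from Proposition~\ref{p:Kernels} and Lemma~\ref{l:Cokernels} together with the characterisation of preabelianness in Theorem~\ref{t:Preabelian}. As a preliminary, I would note that the reject condition in the statement is equivalent to the assertion $(\ast)$: for every morphism $f\colon F\to G$ in $\Flatl{\mathcal S}$, the inclusion $\Img f\hookrightarrow\rej_{\Flatl{\mathcal S}}^{\Img f}(G)$ is a flat reflection of $\Img f$ (here and below $\Img f$ and $\Ker f$ are computed in $\Modl{\mathcal S}$). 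One implication is trivial; for the other, given a submodule $K$ of a flat module $G$, choose an epimorphism $C\twoheadrightarrow K$ with $C$ a projective (hence flat) module, using that the representable functors generate $\Modl{\mathcal S}$, and compose with $K\hookrightarrow G$ to get a morphism of $\Flatl{\mathcal S}$ with image $K$, to which $(\ast)$ applies.

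Next, assuming $\Flatl{\mathcal S}$ is preabelian (hence reflective in $\Modl{\mathcal S}$ by Theorem~\ref{t:Preabelian}, so that the flat reflections below exist), I would unwind the relevant (co)limits for a morphism $f\colon F\to G$ in $\Flatl{\mathcal S}$. By Proposition~\ref{p:Kernels} the kernel of $f$ in $\Flatl{\mathcal S}$ is the inclusion $\Ker f\hookrightarrow F$, and $\Ker f$ is flat. Writing $I=\Img f$ and using Lemma~\ref{l:Cokernels}, the coimage $\Coim f$ is represented by a flat reflection $h_1\colon I\to\widehat I$ of $F/\Ker f\cong I$, the structure map being the composite of $F\twoheadrightarrow I$ with $h_1$; and the cokernel of $f$ in $\Flatl{\mathcal S}$ is the composite of $G\twoheadrightarrow G/I$ with a flat reflection $h_2$ of $G/I$. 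Since $h_2$ is in particular a flat preenvelope, its kernel is $\rej_{\Flatl{\mathcal S}}(G/I)$, so the kernel in $\Modl{\mathcal S}$ of the cokernel of $f$ equals $\rej_{\Flatl{\mathcal S}}^{I}(G)=:J$, which is flat because $\Flatl{\mathcal S}$, being preabelian, is closed under kernels (Corollary~\ref{c:ExistenceKernels}). Hence, again by Proposition~\ref{p:Kernels}, the image of $f$ taken in $\Flatl{\mathcal S}$ is the inclusion $J\hookrightarrow G$. Chasing the canonical factorisation of $f$ through $F\twoheadrightarrow I\xrightarrow{h_1}\widehat I\xrightarrow{\overline f}J\hookrightarrow G$ and cancelling the epimorphism $F\twoheadrightarrow I$ and the monomorphism $J\hookrightarrow G$ shows that $\overline f\colon\widehat I\to J$ is the unique morphism with $\overline f\circ h_1$ equal to the inclusion $I\hookrightarrow J$.

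Because $h_1\colon I\to\widehat I$ is a flat reflection and $J$ is flat, precomposition with $h_1$ is a bijection $\Hom(\widehat I,J)\to\Hom(I,J)$, which makes $\overline f$ well defined; moreover $\overline f$ is an isomorphism if and only if $I\hookrightarrow J$ is itself a flat reflection of $I$. Indeed, if $I\hookrightarrow J$ is a flat reflection, then it and $h_1$ are two reflections of $I$, whence the comparison morphism $\widehat I\to J$, which is $\overline f$, is an isomorphism; conversely, if $\overline f$ is an isomorphism, then for every flat module $F'$ the restriction map $\Hom(J,F')\to\Hom(I,F')$ factors as the bijection induced by $\overline f$ followed by the bijection induced by $h_1$, so $I\hookrightarrow J$ is a flat reflection (cf.\ Lemma~\ref{l:RejFlatDenseSubmodules}). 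Thus, \emph{if $\Flatl{\mathcal S}$ is preabelian, then $\Flatl{\mathcal S}$ is abelian if and only if $(\ast)$ holds}, since abelianness of a preabelian category means exactly that every parallel morphism is an isomorphism.

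Finally I would assemble the two implications. If $\Flatl{\mathcal S}$ is abelian, it is preabelian, so $\Modr{\mathcal S}$ is locally coherent by Theorem~\ref{t:Preabelian}, and $(\ast)$ holds by the previous paragraph, which is the reject condition. Conversely, assume $\Modr{\mathcal S}$ is locally coherent and the reject condition, equivalently $(\ast)$, holds. For any morphism $f\colon F\to G$ of $\Flatl{\mathcal S}$, the morphism $f$ is itself a map from $F$ into a flat module whose kernel (trivially) contains $\Ker f$, so it is among the morphisms defining $\rej_{\Flatl{\mathcal S}}^{\Ker f}(F)$; hence $\rej_{\Flatl{\mathcal S}}^{\Ker f}(F)\le\Ker f$, and as the reverse inclusion always holds, $\rej_{\Flatl{\mathcal S}}^{\Ker f}(F)=\Ker f$. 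Then the flat reflection $\Ker f\hookrightarrow\rej_{\Flatl{\mathcal S}}^{\Ker f}(F)$ supplied by $(\ast)$ is the identity of $\Ker f$, so $\Ker f$ is flat; by Corollary~\ref{c:ExistenceKernels}, $\Modl{\mathcal S}$ has weak dimension at most $2$, and together with local coherence Theorem~\ref{t:Preabelian} gives that $\Flatl{\mathcal S}$ is preabelian. The previous paragraph then yields that $\Flatl{\mathcal S}$ is abelian. I expect the main obstacle to be the identification carried out in the second and third paragraphs: pinning down $\Coim f$ and $\Img f$ inside $\Modl{\mathcal S}$ and recognising $\overline f$ as the comparison map between the two a priori different flat reflections of $\Img f$; the remaining steps are routine uses of Theorem~\ref{t:Preabelian}, Corollary~\ref{c:ExistenceKernels}, Proposition~\ref{p:Kernels} and Lemma~\ref{l:Cokernels}.
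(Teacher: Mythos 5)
Your proposal is correct and follows essentially the same route as the paper: reduce to the preabelian case via Theorem~\ref{t:Preabelian}, identify the kernel, cokernel, image and coimage of a morphism in $\Flatl{\mathcal S}$ using Proposition~\ref{p:Kernels} and Lemma~\ref{l:Cokernels}, and recognise that the parallel morphism is an isomorphism exactly when the inclusion $\Img f\hookrightarrow\rej_{\Flatl{\mathcal S}}^{\Img f}(G)$ is a flat reflection. Your phrasing of the parallel as the comparison map between two reflections of $\Img f$ is a slightly cleaner packaging of the paper's diagram chase ($gr=i'\overline f$), but the substance is the same.
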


\begin{proof}
By Theorem \ref{t:Preabelian}, $\Flatl{\mathcal S}$ is preabelian if and only if $\Modr{\mathcal S}$ is locally coherent and has weak dimension less than or equal to $2$. Moreover, if $K \hookrightarrow \rej_{\Flatl{\mathcal S}}^K(G)$ is a flat reflection for every submodule $K$ of a flat module $G$, then $\Flatl{\mathcal S}$ is preabelian, since if $f:F \rightarrow G$ is a morphism in $\Flatl{\mathcal S}$, then $\Ker f = \rej_{\Flatl{\mathcal S}}^{\Ker f}(F)$ and it has to be flat. Consequently, in order to prove that (1) and (2) are equivalent, we may assume that $\Flatl{\mathcal S}$ is preabelian.

For every morphism $f:F \rightarrow G$ in $\Flatl{\mathcal S}$ take, using Theorem \ref{t:Preabelian}, flat reflections $r:F/\Ker f \rightarrow H$ and $r':G/\Img f \rightarrow H'$. We can construct the following commutative diagram
\begin{equation}\label{e:Diagrama}
\begin{tikzcd}
\Ker f \arrow{r}{k} & F \arrow{r}{f} \arrow{d}{p} & G \arrow{r}{p'} & \frac{G}{\Img f} \arrow{r}{r'} & H'\\
 & \frac{F}{\Ker f} \arrow{d}{r} \arrow{r}{\overline f} & \Img f \arrow{u}{i} \arrow{d}{i'} \\
 & H \arrow[dotted]{r}{g}& \rej_{\Flatl{\mathcal S}}^{\Img f}(G)
\end{tikzcd}
\end{equation}
where $k$, $i$ and $i'$ are inclusions, $p$ and $p'$ projections and $\overline f$ is the parallel of $f$ in $\Modl{\mathcal S}$. The morphism $g$ is constructed as follows: By Lemma \ref{l:Cokernels}, $r'p'$ is de cokernel of $f$ in $\Flatl{\mathcal S}$. Clearly, the kernel of $r'p'$ in $\Modl{\mathcal S}$ is $\rej_{\Flatl{\mathcal S}}^{\Img f}(G)$, so that it is the kernel of $r'p'$ in $\Flatl{\mathcal S}$ by Proposition \ref{p:Kernels} or, in other words, the image of $f$ in $\Flatl{\mathcal S}$. Denote by $j$ the inclusion $\rej_{\Flatl{\mathcal S}}^{\Img f}(G) \hookrightarrow G$. Similarly, $rp$ is the cokernel of $k$ in $\Flatl{\mathcal S}$, which means that $H$ is the coimage of $f$ in $\Flatl{\mathcal S}$. Let $g:H \rightarrow \rej_{\Flatl{\mathcal S}}^{\Img f}(G)$ be the parallel of $f$ in $\Flatl{\mathcal S}$. Then $f=jgrp=i\overline f p$ and, using that $i=ji'$ we conclude that $gr=i'\overline f$.

Now, using diagram (\ref{e:Diagrama}) we can prove, assuming that $\Flatl{\mathcal S}$ is preabelian, that $\Flatl{\mathcal S}$ is abelian if and only if for every submodule $K$ of a flat module $G$, the inclusion $K \hookrightarrow \rej_{\Flatl{\mathcal S}}^K(G)$ is a flat reflection of $K$. If $\Flatl{\mathcal S}$ is abelian and $K$ is a submodule of a flat module $G$, we can find a morphism $f:F \rightarrow G$ with $F$ flat and $\Img f=K$. Then, in diagram (\ref{e:Diagrama}) $\overline f$ and $g$ are isomorphisms, so that $i'=gr\overline f^{-1}$ is a flat reflection.

Conversely, if in diagram (\ref{e:Diagrama}) $i'$ is a flat reflection of $\Img f$, then $\Img i'\leq \Img g$, so that $\Img g$ is $\tau$-dense in $\rej_{\Flatl{\mathcal S}}^{\Img f}(G)$ by Lemma \ref{l:Reflection}. Moreover, using that $r$ and $i'$ are flat reflections and $\overline f$ is an isomorphism, it is easy to see that $g$ is a flat preenvelope. By Lemma \ref{l:Reflection}, $g$ is a flat reflection and, since $H$ is flat, it has to be an isomorphism.
\end{proof}

Our characterization of when $\Flatl{\mathcal S}$ is abelian seems to be new even in the case of rings (compare with \cite[Theorem 2.7]{GarciaSimson}). Now, we give some consequences of our result:

\begin{corollary}\label{c:TorsionPanoramic}
Suppose that $\Flatl{\mathcal S}$ is abelian. Then: 
\begin{enumerate}
\item The global weak dimension of $\Modl{\mathcal S}$ is $0$ or $2$.
	
\item A flat module $E$ is injective in $\Flatl{\mathcal{S}}$ if and only if it is injective in $\Modl{\mathcal S}$.

\item The torsion theory $\tau$ is hereditary, $\mathcal F_\tau$ consists of all modules isomorphic to a submodule of a flat module and $\tau=\rej_{\Flatl{\mathcal S}}$.

\item For a nonzero module $M$, $M$ is $\tau$-torsion free if and only if its flat dimension is less than or equal to $1$.

\item A module $M$ decomposes as $T \oplus P$ with $T$ being $\tau$-torsion and $P$, projective, if and only if $\Ext^1(M,F)=0$ for every flat module $F$.
\end{enumerate}
\end{corollary}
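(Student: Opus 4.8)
The plan is to first distil the hypothesis into a few working facts, deduce the parts that come cheaply, and then isolate the one genuinely hard point. By Theorems~\ref{t:Preabelian} and~\ref{t:Panoramic}, ``$\Flatl{\mathcal S}$ abelian'' means: $\Modr{\mathcal S}$ is locally coherent, so products of flat modules are flat and (Corollary~\ref{c:CoherentCategories}) flat preenvelopes exist; $\Modl{\mathcal S}$ has weak dimension $\le 2$ (Corollary~\ref{c:ExistenceKernels}); and $K\hookrightarrow\rej_{\Flatl{\mathcal S}}^K(G)$ is a flat reflection of $K$ for every submodule $K$ of a flat module $G$. From these I would extract three observations. \emph{(A)} If $M\in\Cogen(\Flatl{\mathcal S})$ then $\fd M\le 1$: embed $M$ into a flat module $G$ and shift $\Tor$ in $0\to M\to G\to G/M\to 0$ using $\fd(G/M)\le 2$. \emph{(B)} If $0\to F_1\to F_0\to N\to 0$ is exact with $F_0,F_1$ flat (e.g.\ if $\fd N\le 1$, via a projective presentation), then $\rej_{\Flatl{\mathcal S}}(N)=0$. \emph{(C)} $\rej_{\Flatl{\mathcal S}}(Y)$ is $\tau$-torsion for every module $Y$.

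For \emph{(B)}: let $p\colon F_0\to N$ be the projection and $h\colon N\to H$ a flat reflection; by Lemma~\ref{l:Cokernels}, $hp$ is a cokernel of $F_1\hookrightarrow F_0$ in $\Flatl{\mathcal S}$. Since $\Flatl{\mathcal S}$ is abelian, the monomorphism $F_1\hookrightarrow F_0$ is the kernel of its own cokernel, so $F_1$ coincides with the kernel of $hp$, which by Proposition~\ref{p:Kernels} is computed in $\Modl{\mathcal S}$ and equals $p^{-1}(\Ker h)=p^{-1}(\rej_{\Flatl{\mathcal S}}(N))$; surjectivity of $p$ then forces $\rej_{\Flatl{\mathcal S}}(N)=0$. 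For \emph{(C)}: take a flat resolution $0\to F_2\to F_1\xrightarrow{d}F_0\to Y\to 0$ (weak dimension $\le 2$); exactly as in \emph{(B)}, the image of $d$ in $\Flatl{\mathcal S}$ is the preimage in $F_0$ of $\rej_{\Flatl{\mathcal S}}(Y)$, a flat module onto which $F_1$ maps epimorphically in $\Flatl{\mathcal S}$, so by Proposition~\ref{p:CharacterizationEpimorphisms} its $\Modl{\mathcal S}$-image $\Img d=\Ker(F_0\to Y)$ is $\tau$-dense in it, whence the quotient, isomorphic to $\rej_{\Flatl{\mathcal S}}(Y)$, is $\tau$-torsion. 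Combining \emph{(B)} and \emph{(A)} gives the chain $\fd N\le 1\iff\rej_{\Flatl{\mathcal S}}(N)=0\iff N\in\Cogen(\Flatl{\mathcal S})\iff N$ embeds in a flat module; and \emph{(C)} together with the always-valid $\tau(Y)\le\rej_{\Flatl{\mathcal S}}(Y)$ gives $\tau=\rej_{\Flatl{\mathcal S}}$, so $\mathcal F_\tau=\Cogen(\Flatl{\mathcal S})=\{N:\fd N\le1\}=\{$submodules of flat modules$\}$. This establishes all of (3) except heredity, it establishes (4) (for $M\neq 0$: use \emph{(B)} for $\Leftarrow$, and \emph{(A)} with $\mathcal F_\tau=\Cogen(\Flatl{\mathcal S})$ for $\Rightarrow$), and it establishes (1): the weak dimension is $\le 2$, and if it were $\le 1$ then every nonzero module would be $\tau$-torsion-free by (4), hence embed in a flat $G$, and $0\to M\to G\to G/M\to 0$ with $\fd(G/M)\le 1$ would force $M$ flat, so the weak dimension would be $0$.

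The remaining assertion, and in my view the main obstacle, is that $\tau$ is hereditary; equivalently (Proposition~\ref{p:PropertiesOfTau}(4)) the injective envelope $E(F)$ of every flat module $F$ is $\tau$-torsion-free, equivalently (by \emph{(C)}, since $\tau=\rej_{\Flatl{\mathcal S}}$) $\rej_{\Flatl{\mathcal S}}(E(F))=0$, i.e.\ $E(F)$ embeds in a flat module. The route I would take: assume $\rej_{\Flatl{\mathcal S}}(E(F))\neq 0$; since $F\subseteq E(F)$ is essential, $K:=F\cap\rej_{\Flatl{\mathcal S}}(E(F))$ is nonzero, and from $F/K\hookrightarrow E(F)/\rej_{\Flatl{\mathcal S}}(E(F))\in\Cogen(\Flatl{\mathcal S})$, observation \emph{(A)}, and a $\Tor$-shift in $0\to K\to F\to F/K\to 0$, one gets that $K$ is flat; thus $K$ is a nonzero flat — hence $\tau$-torsion-free — submodule of the $\tau$-torsion module $\rej_{\Flatl{\mathcal S}}(E(F))$, and one must derive a contradiction by producing a nonzero morphism from $E(F)$ to a flat module that is nonzero on $K$. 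This last step is exactly where the abelian hypothesis has to be used in an essential way: a nonzero flat submodule of a torsion module is harmless for a general (non-hereditary) torsion theory, so the formal consequences \emph{(A)}--\emph{(C)} cannot suffice, and getting this contradiction is the part I would have to think hardest about. Granting heredity, the rest falls out. For (2): if $E$ is flat and injective in $\Flatl{\mathcal S}$, then $E(E)$ is $\tau$-torsion-free, hence embeds in a flat $G$ by (3), so $E\hookrightarrow E(E)\hookrightarrow G$ is a monomorphism in $\Flatl{\mathcal S}$, which splits; restricting the retraction to $E(E)$ and using essentiality of $E\subseteq E(E)$ gives $E=E(E)$, i.e.\ $E$ is injective in $\Modl{\mathcal S}$, the converse being trivial since monomorphisms in $\Flatl{\mathcal S}$ are monomorphisms in $\Modl{\mathcal S}$. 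For (5): if $M=T\oplus P$ with $T$ $\tau$-torsion and $P$ projective, then $\Ext^1(P,F)=0$, and from $0\to F\to E(F)\to E(F)/F\to 0$ with $E(F)$ injective, $\Ext^1(T,F)$ is a quotient of $\Hom(T,E(F)/F)=0$ (as $E(F)/F$ is $\tau$-torsion-free), so $\Ext^1(M,F)=0$; conversely, from $0\to\tau(M)\to M\to M/\tau(M)\to 0$ and $\Hom(\tau(M),F)=0$ one gets $\Ext^1(M/\tau(M),F)=0$ for all flat $F$, and since $M/\tau(M)$ is $\tau$-torsion-free it has $\fd\le 1$ by (4), so in a projective presentation $0\to L\to P_0\to M/\tau(M)\to 0$ the module $L$ is flat and the sequence splits, making $M/\tau(M)$ projective and $M\cong\tau(M)\oplus M/\tau(M)$ the desired decomposition.
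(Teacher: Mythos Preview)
Your observations \emph{(A)}, \emph{(B)}, \emph{(C)} are correct and efficiently organize parts (1), (4), and most of (3) and (5). The genuine gap is exactly where you locate it: you do not prove that $\tau$ is hereditary, and your attempted route (intersect $\rej_{\Flatl{\mathcal S}}(E(F))$ with $F$ to get a nonzero flat submodule of a $\tau$-torsion module) is stuck for the reason you yourself articulate --- a nonzero flat submodule of a $\tau$-torsion module is not a contradiction unless $\tau$ is already known to be hereditary, and maps out of that flat submodule need not extend to $E(F)$.

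The missing idea is to \emph{reverse the dependency between (2) and heredity}. You deduce (2) from heredity; the paper proves (2) first, directly, and then uses (2) to obtain heredity. For (2) one argues via Baer's criterion in $\Modl{\mathcal S}$: given $I\le H_a$ and $f\colon I\to E$, the abelian hypothesis (Theorem~\ref{t:Panoramic}) says $I\hookrightarrow \rej_{\Flatl{\mathcal S}}^{I}(H_a)$ is a flat reflection, so $f$ extends to $\rej_{\Flatl{\mathcal S}}^{I}(H_a)$; this target is flat and embeds in $H_a$, so injectivity of $E$ in $\Flatl{\mathcal S}$ extends again to $H_a$. Once (2) is in hand, heredity follows: $\Flatl{\mathcal S}$, being finitely accessible and abelian, is Grothendieck, so a flat $F$ has an injective hull $E'(F)$ in $\Flatl{\mathcal S}$; by (2) this $E'(F)$ is injective in $\Modl{\mathcal S}$, whence $E(F)$ is a summand of the flat module $E'(F)$, so $E(F)$ is flat and Proposition~\ref{p:PropertiesOfTau}(4) gives heredity. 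With heredity established, your arguments for (2) and (5) (which differ from the paper's --- the paper handles (5) by splitting a projective presentation of $M$ via the flat reflection $K\hookrightarrow\rej_{\Flatl{\mathcal S}}^K(P)$ --- ) go through and are perfectly valid alternatives.
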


\begin{proof}
(1) We know, from Theorem \ref{t:Preabelian}, that the global weak dimension is less than or equal to $2$. So, let us assume that the weak dimension is greater than or equal to $1$ and let us see that it is equal to $2$.

First, we see that $\mathcal T_\tau=0$. Take $M \in \mathcal T_\tau$ and a short exact sequence
\begin{displaymath}
	0 \rightarrow F \hookrightarrow G \xrightarrow{h} M \rightarrow 0
\end{displaymath}
with $F$ and $G$ flat modules. By Lemma \ref{l:RejFlatDenseSubmodules}, $F=\rej_{\Flatl{\mathcal S}}^F(G)$, so that $\rej_{\Flatl{\mathcal S}}(M)=0$. This means that $M$ is $\tau$-torsion free and, consequently, $M=0$.

Now, take any module $M$, a short exact sequence  
\begin{displaymath}
	0 \rightarrow F \hookrightarrow G \xrightarrow{h} M \rightarrow 0
\end{displaymath}
with $F$ and $G$ flat modules and $r:M \rightarrow H$ a flat reflection, which exists by Theorem \ref{t:Preabelian}. By Lemma \ref{l:RejFlatDenseSubmodules} again, $F=\rej_{\Flatl{\mathcal S}}^F(G)$, so that $\Ker r=\rej_{\Flatl{\mathcal S}}(M)=0$. But $\Img r$ is $\tau$-dense in $H$ by Lemma \ref{l:Reflection}, and, using that $\mathcal T_\tau=0$, we conclude that $\Img r = H$. This implies that $M \cong H$ is flat. Since $M$ is arbitrary, the conclusion is that the weak dimension of $\mathcal S$ is $0$.	

(2) If $E$ is injective in $\Flatl{\mathcal S}$ and we take a morphism $f:I \rightarrow E$ defined from a submodule $I$ of a representable functor $P$, since the inclusion $I \hookrightarrow \rej_{\Flatl{\mathcal S}}^I(P)$ is a flat reflection by Theorem \ref{t:Panoramic}, we can extend $f$ to a $g:\rej_{\Flatl{\mathcal S}}^I(P) \rightarrow E$; but $\rej_{\Flatl{\mathcal S}}^I(P)$ is flat and $E$ is injective in $\Flatl{\mathcal S}$, so that we can extend $g$ to $h:P \rightarrow E$. Consequently, $E$ is injective by \cite[Proposition V.2.9]{Stenstrom}.

(3) Take $F$ a flat module and denote by $u:F \rightarrow E(F)$ its injective hull in $\Modl{\mathcal S}$. Since $\Flatl{\mathcal S}$ is Grothendieck, there exists an injective hull $v:F \rightarrow E'(F)$ of $F$ in $\Flatl{\mathcal S}$. Now $v$ is a monomorphism in $\Modl{\mathcal S}$ by Proposition \ref{p:CharacterizationEpimorphisms} and $E'(F)$ is injective in $\Modl{\mathcal S}$ by (1), so that $E(F)$ is isomorphic to a direct summand of $E'(F)$. In particular, $E(F)$ is flat and $\tau$ is hereditary by Proposition \ref{p:PropertiesOfTau}. 

The second assertion follows from the fact that every $\tau$-torsion free module is isomorphic to a submodule of a direct product of injective envelopes of flat modules, Proposition \ref{p:PropertiesOfTau}.

Finally, in order to prove that $\tau=\rej_{\Flatl{\mathcal S}}$ notice that we always has the inclusion $\tau(M) \leq \rej_{\Flatl{\mathcal S}}(M)$. Since $M/\tau(M)$ is isomorphic to a submodule of a flat module for every module $M$, we actually have the equality $\tau(M) = \rej_{\Flatl{\mathcal S}}(M)$.

(3) Suppose that $M$ has flat dimension smaller than $2$. Then, there exists an epimorphism in $\Modl{\mathcal S}$, $f:F \rightarrow M$, whose kernel, $K$, is flat. By Theorem \ref{t:Panoramic} and Lemma \ref{l:RejFlatDenseSubmodules}, $\rej_{\Flatl{\mathcal S}}^K(F)=K$ which means that $\rej_{\Flatl{\mathcal S}}(M)=0$. Using Lemma \ref{l:CogFlats} and the fact that product of flats are flat, we conclude $M$ is isomorphic to a submodule of a flat module. That is, $M$ is $\tau$-torsion free.

Conversely, suppose that the flat dimension of $M$ is $2$ and take a flat presentation of $M$, $f:F \rightarrow M$, with kernel $K$. Then, the inclusion $K \hookrightarrow \rej_{\Flatl{\mathcal S}}^K(F)$ is a flat reflection of $K$ by Theorem \ref{t:Panoramic}. Since $K$ is not flat, the quotient $\rej_{\Flatl{\mathcal S}}^K(F)/K$ is a nonzero torsion module, by Lemma \ref{l:Reflection}, which is isomorphic to a submodule of $M$. This means that $M$ is not torsion-free.

(4) First, suppose that $M\cong T \oplus P$. Then, $\Ext^1(M,F)\cong\Ext^1(T,F)$ , so that let us prove that $\Ext^1(T,F)=0$ for every flat module $F$.

Take $K$ a $\tau$-dense submodule of a projective module $P$ such that $P/K \cong T$ and fix a flat module $F$. Applying $\Hom(-,F)$ to the short exact sequence
\begin{displaymath}
	\begin{tikzcd}
		0 \arrow{r} & K \arrow{r}{i} & P \arrow{r} & T \arrow{r} & 0
	\end{tikzcd}
\end{displaymath}
we obtain the long exact sequence:
\begin{displaymath}		
	\begin{tikzcd}
		0 \arrow{r} & \Hom(P,F) \arrow{r} & \Hom(K,F)\arrow{r} & \Ext^1(T,F) \arrow{r} & \Ext^1(P,F)
	\end{tikzcd}
\end{displaymath}
Since $\rej_{\Flatl{\mathcal S}}^K(P)=P$, Theorem \ref{t:Panoramic} says that the inclusion $i:K \rightarrow P$ is a flat reflection. Using that $\Hom(i,F)$ is an isomorphism and that $\Ext^1(P,F)=0$, we conclude that $\Ext^1(T,F)=0$.

Conversely, suppose that $\Ext^1(M,F)=0$ for every flat module $F$ and take a short exact sequence
\begin{displaymath}
	\begin{tikzcd}
		0 \arrow{r} & K \arrow{r}{i} & P \arrow{r} & M \arrow{r} & 0
	\end{tikzcd}
\end{displaymath}
with $P$ projective. The condition $\Ext^1(M,F)$ for every flat module $F$ implies that $i$ is a flat preenvelope. Since, by Theorem \ref{t:Panoramic}, the inclusion $K \hookrightarrow \rej_{\Flatl{\mathcal S}}^K(P)$ is a flat reflection, $P=\rej_{\Flatl{\mathcal S}}^K(P)\oplus P'$ for some submodule $P'$ of $P$. Then
\begin{displaymath}
	M \cong \frac{\rej_{\Flatl{\mathcal S}}^K(P)}{K} \oplus P'
\end{displaymath}
where $P'$ is projective and $\rej_{\Flatl{\mathcal S}}^K(P)/K$ is $\tau$-torsion, since it is equal to $\tau(P/K)$ by (3).
\end{proof}

\section{Flat and projective objects in $\Flatl{\mathcal S}$}

There are two canonical ways of constructing an exact structure in $\Flatl{\mathcal S}$ (there are others, see \cite{Crivei}). First, since $\Flatl{\mathcal S}$ is closed under extensions in $\Modl{\mathcal S}$ (which means that each short exact sequence $0 \rightarrow A \rightarrow B \rightarrow C \rightarrow 0$ in $\Modl{\mathcal S}$ with $A,C \in \Flatl{\mathcal S}$, satisfies that $B \in \Flatl{\mathcal S}$), $\Flatl{\mathcal S}$ is an exact category in which the class $\mathcal P$ of exact sequences consists of the exact sequences in $\Modl{\mathcal S}$ with all terms belonging to $\Flatl{\mathcal S}$.

Second, since $\Flatl{\mathcal S}$ is a finitely accessible additive category, we can consider in $\Flatl{\mathcal S}$ the class $\mathcal P'$ consisting of all of composable morphisms, $A \rightarrow B \rightarrow  C$, such that
\begin{displaymath}
\begin{tikzcd}
0 \arrow{r} & \Hom(P,A) \arrow{r} & \Hom(P,B) \arrow{r} & \Hom(P,C) \arrow{r} & 0
\end{tikzcd}
\end{displaymath}
is exact in the category of abelian groups for every module $P$ which is finitely presented in $\Flatl{\mathcal S}$ (i.e., finitely generated projective in $\Modl{\mathcal S}$). As a consequence of the following result, which is true for any Grothendieck category, both exact structures $\mathcal P$ and $\mathcal P'$ coincide.

\begin{lemma}\label{l:Exactness}
Let $\mathcal A$ be a Grothendieck category with a set $\mathcal G$ of projective generators. A sequence of morphisms
\begin{displaymath}
\begin{tikzcd}
0 \arrow{r} & A \arrow{r}{f} & B \arrow{r}{g} & C \arrow{r} & 0
\end{tikzcd}
\end{displaymath}
is short exact provided that the sequence
\begin{equation}\label{e:Sequence}
\begin{tikzcd}
0 \arrow{r} & \Hom_{\mathcal A}(G,A) \arrow{r} & \Hom_{\mathcal A}(G,B) \arrow{r} & \Hom_{\mathcal A}(G,C) \arrow{r} & 0
\end{tikzcd}
\end{equation}
is short exact in $\mathbf{Ab}$ for every $G \in \mathcal G$.
\end{lemma}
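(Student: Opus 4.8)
The plan is to check separately that $f$ is a monomorphism, that $g$ is an epimorphism, and that $\Img f=\Ker g$, using nothing about $\mathcal{G}$ beyond the fact that it is a generating set (projectivity of the objects of $\mathcal{G}$ will play no role in this implication). The one tool used repeatedly is the standard consequence of generation: for every object $X$ the canonical morphism $\varepsilon_X\colon\bigoplus_{G\in\mathcal{G}}G^{(\Hom_{\mathcal{A}}(G,X))}\to X$ is an epimorphism, so a morphism $u\colon X\to Y$ vanishes as soon as $u\varphi=0$ for every $G\in\mathcal{G}$ and every $\varphi\colon G\to X$ (apply this to $\varepsilon_X$).

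First I would record that $gf=0$: for each $G$ the composite $\Hom_{\mathcal{A}}(G,g)\circ\Hom_{\mathcal{A}}(G,f)$ vanishes by exactness of (\ref{e:Sequence}) at the middle term, so $gf\varphi=0$ for all $\varphi\colon G\to A$, hence $gf=0$. Next, to see that $f$ is monic, let $\iota\colon\Ker f\hookrightarrow A$; for every $G$ and $\varphi\colon G\to\Ker f$ we get $\Hom_{\mathcal{A}}(G,f)(\iota\varphi)=f\iota\varphi=0$, and injectivity of $\Hom_{\mathcal{A}}(G,f)$ (exactness of (\ref{e:Sequence}) at the left) forces $\iota\varphi=0$; running this over $\varepsilon_{\Ker f}$ gives $\iota=0$, i.e. $\Ker f=0$. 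Dually, surjectivity of $\Hom_{\mathcal{A}}(G,g)$ says every $\psi\colon G\to C$ factors through $\Img g\hookrightarrow C$, so the composite of $\varepsilon_C$ with the projection $C\to C/\Img g$ is zero, whence $C/\Img g=0$ and $g$ is an epimorphism.

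It then remains to prove $\Img f=\Ker g$. From $gf=0$ we obtain a factorization $\Img f\hookrightarrow\Ker g\hookrightarrow B$, and since $\mathcal{A}$ is abelian it is enough to show that the monomorphism $\Img f\hookrightarrow\Ker g$ is also epic. Given $G\in\mathcal{G}$ and $\chi\colon G\to\Ker g$, write $\chi'\colon G\to B$ for the composite with $\Ker g\hookrightarrow B$; then $g\chi'=0$, so $\chi'\in\ker\Hom_{\mathcal{A}}(G,g)=\operatorname{im}\Hom_{\mathcal{A}}(G,f)$ by exactness of (\ref{e:Sequence}), i.e. $\chi'=f\eta$ for some $\eta\colon G\to A$; thus $\chi'$, and hence $\chi$ (as $\Ker g\hookrightarrow B$ is monic), factors through $\Img f\hookrightarrow\Ker g$. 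Applying this to $\varepsilon_{\Ker g}$ shows that $\Ker g\to\Ker g/\Img f$ is zero, so $\Img f=\Ker g$ and the sequence is short exact.

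I do not anticipate a genuine obstacle here; the only point demanding a little care is the recurring step \emph{a morphism is zero once it is annihilated by every map out of an object of $\mathcal{G}$}, which is exactly the defining property of a generating set and is precisely why projectivity is not needed for this direction of the statement.
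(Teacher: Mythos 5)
Your proof is correct and follows essentially the same route as the paper's: both arguments check that $f$ is monic, $g$ is epic, and $\Img f=\Ker g$ by testing against epimorphisms from direct sums of generators and using the three exactness conditions on the $\Hom$-sequences, with projectivity of the objects of $\mathcal G$ playing no role. Your observation that only the generating property is used is accurate and matches what the paper's proof implicitly does.
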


\begin{proof}
In order to see that $g$ is an epimorphism take $w:\bigoplus_{i\in I}G_i \rightarrow C$ an epimorphism with $G_i \in \mathcal G$ for each $i \in I$. Then, using that $\Hom_{\mathcal A}(G_i,g)$ is an epimorphism for each $i \in I$ it is easy to see that $w$ factors through $g$. In particular, $g$ is an epimorphism.

In order to see that $f$ is monic take $t:\bigoplus_{j \in J}H_j \rightarrow \Ker f$ an epimorphism with $H_j \in \mathcal G$ for each $j \in J$. Then, if $k_j: H_j \rightarrow \bigoplus_{j \in J}H_j$ and $k:\Ker f \rightarrow A$ are the canonical monomorphisms, using that $\Hom_{\mathcal A}(H_j,f)(kk_j)=0$ and that $\Hom_{\mathcal A}(H_j,f)$ is monic, we get that $kk_j=0$ for each $j \in J$. This means that $k=0$, that $\Ker f=0$ and that $f$ is monic.

Finally, let us see that the sequence is exact at $B$. In order to see that $gf=0$, take an epimorphism $v:\bigoplus_{l \in L}T_l \rightarrow A$ with $T_l \in \mathcal G$ for every $l \in L$, and notice that, since $\Hom_{\mathcal A}(G,gf)=0$, we get that $gfv=0$. Since $v$ is epic, $gf=0$. Now, we can construct the following commutative diagram:
\begin{displaymath}
\begin{tikzcd}
A \arrow{r}{f} \arrow{d}{p} & B \arrow{r}{g} & C & \\
\Coim f \arrow{r}{\overline f} & \Img f \arrow{u}{j} \arrow{r}{k} & \Ker g \arrow{r}{q} \arrow{lu}{i} & \Coker k
\end{tikzcd}
\end{displaymath}
where $k$ exists since $gj=0$. We only have to see that $k$ is an epimorphism and, consequently, an isomorphism. Let $q:\Ker g \rightarrow \Coker k$ be the cokernel of $k$ and take an epimorphism $u:\bigoplus_{l \in L}T_l \rightarrow \Ker g$ with $T_l \in \mathcal G$; denote by $k_l:T_l \rightarrow \bigoplus_{l \in L}T_l$ the canonical inclusion for every $l \in L$.  Using that the sequence (\ref{e:Sequence}) is exact and that $giuk_l=0$ we can find $h_l\colon T_l \rightarrow A$ with $fh_l=iuk_l$ for each $l \in L$. These morphisms induce $h\colon\bigoplus_{l \in L}T_l \rightarrow A$ which satisfies $fh=iu$. Then $ik\overline fph=iu$ and, since $i$ is monic, $k\overline fph=u$. Then $qu=0$ which implies that $q=0$ and, consequently, $\Coker k=0$.
\end{proof}

We call the exact sequences in $\mathcal P=\mathcal P'$ \textit{pure-exact sequences}.  A morphism $f:M \rightarrow N$ is a \textit{pure monomorphism} (resp. a \textit{pure epimorphism}) if there exists a pure-exact sequence $0\rightarrow K \rightarrow M \xrightarrow{f} N \rightarrow 0$ (resp. $0 \rightarrow M \xrightarrow{f} N \rightarrow Q \rightarrow 0$).

The notion of flat object in a module category or in a finitely accessible abelian category $\mathcal A$ is clear \cite{Crawley}, \cite{Stenstrom68} and \cite{CuadraSimson}: an object $A$ of $\mathcal A$ is flat if any epimorphism $f:B \rightarrow A$ in $\mathcal A$ is pure. However, if the category $\mathcal A$ is not abelian, there have been considered several notions of flatness in $\mathcal A$. Here, we consider the one used in \cite{CriveiPrestTorrecillas} and in \cite{Crivei}. 

\begin{definition}
Given a module $F \in \Flatl{\mathcal S}$, we say that $F$ is
\begin{enumerate}
\item \textit{Projective} if every epimorphism in $\Flatl{\mathcal S}$, $f:H \rightarrow F$, is a split epimorphism.

\item \textit{Flat}, if every epimorphism in $\Flatl{\mathcal S}$, $f:H \rightarrow F$, is pure.
\end{enumerate}
\end{definition}

Notice that this notion of flatness is different from the one considered in \cite{Rump}, where Rump fixes a right exact structure $\mathcal E$ in the finitely accessible additive category $\mathcal A$ containing all pure epimorphisms, and define an object $A$ of $\mathcal A$ to be flat if every conflation onto $A$ is a pure epimorphism. However, both notions of flatness coincide in any finitely accessible Grothendieck category considering the abelian exact structure. 

The following is a characterization of flat objects in $\Flatl{\mathcal S}$ in terms of the torsion theory $\tau$.

\begin{proposition}\label{p:CharacterizationFlats}
Let $F$ be a module in $\Flatl{\mathcal S}$. Then:
\begin{enumerate}
\item $F$ is flat in $\Flatl{\mathcal S}$ if and only if $F$ is $\tau$-cotorsion-free (i. e., it contains no proper dense submodule).

\item $F$ is projective in $\Flatl{\mathcal S}$ if and only if $F$ is flat in $\Flatl{\mathcal S}$ and projective in $\Modl{\mathcal S}$.
\end{enumerate}
\end{proposition}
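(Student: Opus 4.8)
The plan is to translate both ``flat in $\Flatl{\mathcal S}$'' and ``projective in $\Flatl{\mathcal S}$'' into statements about the ambient category $\Modl{\mathcal S}$, using Proposition~\ref{p:CharacterizationEpimorphisms} and the description of the pure-exact structure, and then to read off each equivalence almost immediately. The observation I would record first is that a morphism $f\colon H \to F$ between flat modules is a pure epimorphism in $\Flatl{\mathcal S}$ if and only if it is surjective in $\Modl{\mathcal S}$: the pure-exact sequences of $\Flatl{\mathcal S}$ are precisely the sequences $0 \to A \to B \to C \to 0$ that are exact in $\Modl{\mathcal S}$ with all terms flat, and whenever $f$ is surjective the sequence $0 \to \Ker f \to H \to F \to 0$ is pure in $\Modl{\mathcal S}$ (because $F$ is flat), so $\Ker f$ is a pure submodule of the flat module $H$ and hence flat. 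Combined with Proposition~\ref{p:CharacterizationEpimorphisms}(1), this means: ``$F$ is flat in $\Flatl{\mathcal S}$'' is equivalent to ``every morphism $H \to F$ with $H$ flat and $\tau$-dense image is already surjective in $\Modl{\mathcal S}$''.

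For part (1), the direction ($\Leftarrow$) is then immediate. If $F$ is $\tau$-cotorsion-free and $f\colon H \to F$ is an epimorphism in $\Flatl{\mathcal S}$, then $\Img f$ is $\tau$-dense in $F$ by Proposition~\ref{p:CharacterizationEpimorphisms}(1), hence equals $F$; so $f$ is surjective and therefore a pure epimorphism by the observation above. For ($\Rightarrow$), suppose $F$ is flat in $\Flatl{\mathcal S}$ and let $K$ be a $\tau$-dense submodule of $F$. I would choose an epimorphism $P \to K$ in $\Modl{\mathcal S}$ with $P$ projective (hence flat) --- such $P$ exists since the representable functors are projective generators --- and compose with the inclusion $K \hookrightarrow F$; the resulting morphism $P \to F$ has $\tau$-dense image $K$, so it is an epimorphism in $\Flatl{\mathcal S}$, hence surjective, forcing $K = F$. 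Thus $F$ has no proper $\tau$-dense submodule. The one point to emphasize is that every submodule of $F$ arises as the image of a flat module, so every $\tau$-dense submodule really is realized by an epimorphism of $\Flatl{\mathcal S}$ onto $F$.

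For part (2): ($\Leftarrow$) If $F$ is flat in $\Flatl{\mathcal S}$ and projective in $\Modl{\mathcal S}$, then any epimorphism $f\colon H \to F$ in $\Flatl{\mathcal S}$ is surjective in $\Modl{\mathcal S}$ (since $F$ is flat in $\Flatl{\mathcal S}$, by the observation above), hence admits a section $s\colon F \to H$ in $\Modl{\mathcal S}$; since $s$ is automatically a morphism of $\Flatl{\mathcal S}$, $f$ splits there. ($\Rightarrow$) A split epimorphism in $\Flatl{\mathcal S}$ onto $F$ is surjective in $\Modl{\mathcal S}$ and hence pure, so $F$ is flat in $\Flatl{\mathcal S}$; to see $F$ is projective in $\Modl{\mathcal S}$, take any epimorphism $g\colon M \to F$ in $\Modl{\mathcal S}$, precompose with a surjection $P \to M$ from a projective module $P$, note that $P \to F$ is an epimorphism in $\Flatl{\mathcal S}$ (its image is all of $F$), split it in $\Flatl{\mathcal S}$, and transport the section along $P \to M$ to obtain a section of $g$.

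I do not expect a genuine obstacle: once pure epimorphisms of $\Flatl{\mathcal S}$ have been identified with the surjections between flat modules, every step is a short formal manipulation together with Proposition~\ref{p:CharacterizationEpimorphisms}(1). The only place that demands care is keeping the notions of ``epimorphism'' apart --- epimorphisms of $\Flatl{\mathcal S}$ have merely $\tau$-dense image, whereas (pure) epimorphisms of $\Modl{\mathcal S}$ are honestly surjective --- and the notion of $\tau$-cotorsion-freeness is exactly what collapses the gap for a fixed flat $F$.
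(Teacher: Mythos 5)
Your proof is correct and follows essentially the same route as the paper: both arguments reduce everything to the identification of epimorphisms in $\Flatl{\mathcal S}$ with morphisms of $\tau$-dense image (Proposition \ref{p:CharacterizationEpimorphisms}) and of pure epimorphisms with honest surjections between flat modules. The only cosmetic differences are that the paper invokes Lemma \ref{l:Exactness} where you read surjectivity directly off the description of the exact structure $\mathcal P$, and that for projectivity in $\Modl{\mathcal S}$ the paper splits a single epimorphism from a free module while you split an arbitrary one by lifting through a projective.
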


\begin{proof}
(1) Suppose that $F$ is flat in $\Flatl{\mathcal S}$ and let $K \leq F$ be such that $F/K$ is $\tau$-torsion. Take $f:G \rightarrow F$ a morphism in $\Flatl{\mathcal S}$ with $\Img f= K$. By Proposition \ref{p:CharacterizationEpimorphisms}, $f$ is an epimorphism in $\Flatl{\mathcal S}$. Since $F$ is flat in $\Flatl{\mathcal S}$, $f$ is a pure epimorphism in $\Flatl{\mathcal S}$ and, in particular, an epimorphism in $\Modl{\mathcal S}$ by Lemma \ref{l:Exactness}, so that $K=F$.

The converse is clear by Proposition \ref{p:CharacterizationEpimorphisms}.

(2) Suppose that $F$ is projective in $\Flatl{\mathcal S}$. Then it is clearly flat in $\Flatl{\mathcal S}$ since every split epimorphism is pure. Moreover, there exists an epimorphism in $\Modl R$, $f:R^{(I)} \rightarrow F$. Since $F$ is projective in $\Flatl{\mathcal S}$, this epimorphism splits in $\Flatl{\mathcal S}$ and, equivalently, in $\Modl R$. That is, $F$ is projective in $\Modl R$.

The converse is clear.
\end{proof}

%

The following result collects the main properties of flat objects in $\Flatl{\mathcal S}$.

\begin{proposition}\label{p:PropertiesFlats}
\begin{enumerate}
\item The class of all flat modules in $\Flatl{\mathcal S}$ is closed under pure quotients.

\item If $f:F \rightarrow G$ is an epimorphism in $\Flatl{\mathcal S}$ such that both $\Ker f$ and $G$ are flat in $\Flatl{\mathcal S}$, then $F$ is flat in $\Flatl{\mathcal S}$.

\item If $\tau$ is hereditary, the class of flat modules in $\Flatl{\mathcal S}$ is closed under direct limits.

\item If $\tau$ is jansian, then the class of flat objects in $\Flatl{\mathcal S}$ is closed under pure submodules.
\end{enumerate}
\end{proposition}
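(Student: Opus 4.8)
The plan is to reduce all four statements to Proposition \ref{p:CharacterizationFlats}(1), which identifies the flat objects of $\Flatl{\mathcal S}$ with the $\tau$-cotorsion-free modules that happen to be flat. In each case the module under consideration will automatically be flat as a module (pure submodules and pure quotients of flat modules are flat, and direct limits of flat modules are flat, since $\Flatl{\mathcal S}$ is finitely accessible), so the only point is to show that $\tau$-cotorsion-freeness is inherited. A second observation I would use throughout is that the pure-exact sequences of $\Flatl{\mathcal S}$ — which by $\mathcal P=\mathcal P'$ and Lemma \ref{l:Exactness} are exactly the short exact sequences of $\Modl{\mathcal S}$ with all three terms flat — are genuinely pure in $\Modl{\mathcal S}$, because a short exact sequence with flat cokernel is pure by the very definition of a flat module. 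Granting this, the machinery on $\tau$-cotorsion-free modules from Section \ref{s:TorsionTheories} does the work.

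For (1), write a pure quotient $Q$ of a flat $F\in\Flatl{\mathcal S}$ as $F/K$ with $F/K$ flat; then $Q$ is a quotient of the $\tau$-cotorsion-free module $F$, hence $\tau$-cotorsion-free by Proposition \ref{p:PropertiesCotorsionFree}, hence flat in $\Flatl{\mathcal S}$. For (2), since $f$ is an epimorphism in $\Flatl{\mathcal S}$, the submodule $\Img f$ is $\tau$-dense in $G$ by Proposition \ref{p:CharacterizationEpimorphisms}(1); as $G$ is $\tau$-cotorsion-free it admits no proper $\tau$-dense submodule, so $\Img f=G$ and $0\to\Ker f\to F\to G\to 0$ is short exact in $\Modl{\mathcal S}$. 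Now $\Ker f$ and $G$ are $\tau$-cotorsion-free, so $F$ is $\tau$-cotorsion-free by the closure of that class under extensions (Proposition \ref{p:PropertiesCotorsionFree}), and therefore $F$ is flat in $\Flatl{\mathcal S}$.

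For (3), direct limits in $\Flatl{\mathcal S}$ are computed in $\Modl{\mathcal S}$, where flat modules are closed under direct limits; so it suffices to see that a direct limit of $\tau$-cotorsion-free modules is again $\tau$-cotorsion-free. When $\tau$ is hereditary, the class of $\tau$-cotorsion-free modules is closed under coproducts (Proposition \ref{p:PropertiesCotorsionFree}) and under quotients, and the colimit of a directed system is a quotient of the coproduct of its terms; hence this class is closed under direct limits, which gives the claim. For (4), a pure submodule $K$ of a flat $F\in\Flatl{\mathcal S}$ sits in a pure-exact sequence $0\to K\to F\to Q\to 0$ of flat modules, so by the remark above it is a pure submodule of $F$ already in $\Modl{\mathcal S}$; if $\tau$ is jansian, Corollary \ref{c:PureSubobjectsJansian} says the class of $\tau$-cotorsion-free modules is closed under pure subobjects, so $K$ is $\tau$-cotorsion-free and hence flat in $\Flatl{\mathcal S}$.

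I do not anticipate a serious obstacle here, since Propositions \ref{p:PropertiesCotorsionFree} and \ref{p:CharacterizationFlats} and Corollary \ref{c:PureSubobjectsJansian} carry the weight. The only delicate points are bookkeeping: recognising that the internal pure-exact structure of $\Flatl{\mathcal S}$ agrees with honest purity in $\Modl{\mathcal S}$ because the relevant cokernels are flat, and, in (2), noticing that $\tau$-cotorsion-freeness of the codomain upgrades ``epimorphism in $\Flatl{\mathcal S}$'' to ``surjection in $\Modl{\mathcal S}$'' so that the extension-closure argument applies.
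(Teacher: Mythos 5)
Your proposal is correct and follows essentially the same route as the paper: in each part you reduce to the identification of flat objects of $\Flatl{\mathcal S}$ with $\tau$-cotorsion-free flat modules (Proposition \ref{p:CharacterizationFlats}) and then invoke the closure properties of $\tau$-cotorsion-free modules from Proposition \ref{p:PropertiesCotorsionFree} and Corollary \ref{c:PureSubobjectsJansian}. The only cosmetic differences are that in (2) you upgrade the epimorphism to a surjection via $\tau$-density rather than via the definition of flatness of $G$, and in (3) you use that the colimit is a plain quotient of the coproduct where the paper routes through pure quotients and part (1); both variants are sound.
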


\begin{proof}
(1) Given a pure monomorphism $f:F \rightarrow G$ in $\Flatl{\mathcal S}$, $f$ is an epimorphism in $\Modl{\mathcal S}$ and $G$ is $\tau$-cotorsion-free by Proposition \ref{p:PropertiesCotorsionFree}. By Proposition \ref{p:CharacterizationFlats}, $G$ is flat in $\Flatl{\mathcal S}$.

(2) Since $G$ is flat in $\Flatl{\mathcal S}$, $f$ is a pure epimorphism, i. e., an epimorphism in $\Modl{\mathcal S}$. Then we have the following short exact sequence in $\Modl{\mathcal S}$,
\begin{displaymath}
	0 \rightarrow \Ker f \rightarrow F \rightarrow G \rightarrow 0
\end{displaymath}
with $\Ker f$ and $G$ being $\tau$-cotorsion-free by Proposition \ref{p:CharacterizationFlats}. Then $F$ is $\tau$-cotorsion-free as well by Proposition \ref{p:PropertiesCotorsionFree}. This means that $F$ is flat in $\Flatl{\mathcal S}$, again by Proposition \ref{p:CharacterizationFlats}.

(3) First, notice that the class of flat objects in $\Flatl{\mathcal S}$ is closed under direct sums by propositions \ref{p:PropertiesCotorsionFree} and \ref{p:CharacterizationFlats}. Second, since every direct limit $L$ of flat objects in $\Flatl{\mathcal S}$ is a pure quotient of a direct sum of flat objects in $\Flatl{\mathcal S}$, $L$ is flat in $\Flatl{\mathcal S}$ by (1).

(4) This follows from Proposition \ref{p:CharacterizationFlats} and Corollary \ref{c:PureSubobjectsJansian}.
\end{proof}

Using Theorem \ref{t:Deconstructible}, we can obtain the similar result for flat objects in $\Flatl{\mathcal S}$ when pure submodules of flat objects are pure in $\Flatl{\mathcal S}$. As a consequence of Corollary \ref{c:EnoughFlatsImpliesPureSubmodules}, this result extends  \cite[Theorem 3]{Rump}.

\begin{theorem}\label{t:Rump}
	Assume that $\tau$ is hereditary and that the class of flat objects in $\Flatl{\mathcal S}$ is closed under pure subobjects. Then:
		\begin{enumerate}
		\item Every flat object in $\Flatl{\mathcal S}$ is the direct union of $<w(\mathcal S)^+$-presented (in $\Modl{\mathcal S}$) flat objects.
		
		\item Every flat object in $\Flatl{\mathcal S}$ is filtered by $w(\mathcal S)^+$-presented (in $\Modl{\mathcal S}$) flat objects. In particular, $\Flatl{\mathcal S}$ is deconstructible.
	\end{enumerate}
\end{theorem}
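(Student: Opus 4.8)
The plan is to follow the proofs of Theorem~\ref{t:Deconstructible}(1) and~(2), systematically replacing "flat module" by "flat object in $\Flatl{\mathcal S}$". By Proposition~\ref{p:CharacterizationFlats} a flat object in $\Flatl{\mathcal S}$ is exactly a $\tau$-cotorsion-free flat module, and the only extra ingredient needed is that the small pure submodules produced by Theorem~\ref{t:Purification} are again $\tau$-cotorsion-free; this is precisely the standing hypothesis. It is convenient to note first that, for a flat module $B$, a pure submodule $A$ of $B$ in $\Modl{\mathcal S}$ is the same thing as a pure subobject of $B$ in the exact category $(\Flatl{\mathcal S},\mathcal P')$: both $A$ and $B/A$ are then flat (pure submodules and pure quotients of flat modules are flat), so $0\to A\to B\to B/A\to 0$ lies in $\mathcal P'$; conversely any sequence in $\mathcal P'$ is pure in $\Modl{\mathcal S}$ since its right-hand term is flat. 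Hence the hypothesis reads: if $B$ is a flat $\tau$-cotorsion-free module and $A\le B$ is pure in $\Modl{\mathcal S}$, then $A$ is $\tau$-cotorsion-free.

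For~(1): given a flat object $F$ in $\Flatl{\mathcal S}$, I would apply Theorem~\ref{t:Purification}(2) to write $F=\bigcup_{i\in I}F_i$ as the direct union of its $<w(\mathcal S)^+$-presented pure submodules. Each $F_i$ is flat, being a pure submodule of the flat module $F$, and $\tau$-cotorsion-free by the hypothesis, hence a $<w(\mathcal S)^+$-presented (in $\Modl{\mathcal S}$) flat object in $\Flatl{\mathcal S}$ by Proposition~\ref{p:CharacterizationFlats}. This settles~(1) at once.

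For~(2): I would repeat the transfinite recursion of Theorem~\ref{t:Deconstructible}(2). We may assume $F$ is $\mu$-presented as a module for some $\mu\ge w(\mathcal S)^+$ (the case $|F|<w(\mathcal S)^+$ is trivial). Enumerating $\bigcup_{a\in\mathcal S}F(a)=\{x_\alpha\mid\alpha<\mu\}$, build an increasing chain $(F_\alpha\mid\alpha\le\mu)$ with $F_0=0$, $F_\alpha$ pure in $F$, and $x_\gamma\in F_\alpha$ for $\gamma<\alpha$: at a successor step $F/F_\alpha$ is flat (as $F_\alpha$ is pure in the flat module $F$) and $\tau$-cotorsion-free (a quotient of $F$, by Proposition~\ref{p:PropertiesCotorsionFree}), so Theorem~\ref{t:Purification}(1) gives a $<w(\mathcal S)^+$-presented pure submodule $F_{\alpha+1}/F_\alpha$ of $F/F_\alpha$ containing $x_\alpha+F_\alpha$, and $F_{\alpha+1}$ is again pure in $F$ by transitivity of purity; at a limit step one takes the union, which stays pure in $F$ as a directed union of pure submodules. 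Then each consecutive quotient $F_{\alpha+1}/F_\alpha$ is a pure submodule of the flat $\tau$-cotorsion-free module $F/F_\alpha$, hence a $<w(\mathcal S)^+$-presented flat object in $\Flatl{\mathcal S}$ by the hypothesis, while $F_\alpha$ and $F_{\alpha+1}$ are flat, so the sequences $0\to F_\alpha\to F_{\alpha+1}\to F_{\alpha+1}/F_\alpha\to 0$ are pure-exact; this is the required filtration. For the deconstructibility clause, let $\mathcal X$ be a set of representatives of the isomorphism classes of $<w(\mathcal S)^+$-presented flat objects in $\Flatl{\mathcal S}$ (a set, by Proposition~\ref{p:KappaPresented}): the filtration just built shows every flat object in $\Flatl{\mathcal S}$ is $\mathcal X$-filtered, and conversely any $\mathcal X$-filtered object is a flat object in $\Flatl{\mathcal S}$, since flat modules are closed under extensions and direct unions and, $\tau$ being hereditary, so are the $\tau$-cotorsion-free modules (they form a torsion class).

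I do not expect a genuine obstacle here; everything is bookkeeping on top of Theorem~\ref{t:Deconstructible}. The two points deserving care are the identification of pure subobjects in $(\Flatl{\mathcal S},\mathcal P')$ with pure submodules in $\Modl{\mathcal S}$ — so that the hypothesis genuinely applies to the pieces coming out of purification — and the stability of purity along the recursion (transitivity at successor steps, directed unions at limit steps), both standard and already implicit in the proof of Theorem~\ref{t:Deconstructible}.
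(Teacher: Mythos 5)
Your proof is correct and follows essentially the same route as the paper, which simply invokes Theorem~\ref{t:Deconstructible} (whose proof is exactly the purification recursion you re-run) and then uses the hypothesis to see that the pure pieces are flat objects in $\Flatl{\mathcal S}$. Your extra care in identifying pure subobjects in the exact structure $\mathcal P'$ with pure submodules in $\Modl{\mathcal S}$ is a point the paper leaves implicit, and is a welcome clarification rather than a deviation.
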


\begin{proof}
	Let $F$ be a flat object in $\Flatl{\mathcal S}$. By Theorem \ref{t:Deconstructible}(1), $F$ is the direct union of $<w(\mathcal S)^+$-presented pure (in $\Modl{\mathcal S}$) subobjects. By hypothesis, \ref{p:PropertiesFlats}, these pure submodules are flat objects in $\Flatl{\mathcal S}$ and (1) holds.
	
	\medskip
	
	Now, by Theorem \ref{t:Deconstructible}, $F$ has a filtration, $(F_\alpha \mid \alpha < \mu)$, by $<w(\mathcal S)^+$-presented flat modules; moreover, $F_\alpha$ can be taken to be pure in $F$. Then $F_{\alpha}$ and $F_{\alpha+1}/F_\alpha$ are flat objects in $\Flatl{\mathcal S}$ by Proposition \ref{p:PropertiesFlats}. Thus, (2) holds.
\end{proof}

Now we give a characterization of when all objects in $\Flatl{\mathcal S}$ are flat in $\Flatl{\mathcal S}$:

\begin{proposition}\label{p:AllObjectsAreFlat}
	The following assertions are equivalent:
	\begin{enumerate}
		\item Every object in $\Flatl{\mathcal S}$ is flat in $\Flatl{\mathcal S}$.
		
		\item Every epimorphism in $\Flatl{\mathcal S}$ is an epimorphism in $\Modl{\mathcal S}$.
		
		\item Every representable functor is flat in $\Flatl{\mathcal S}$.
		
		\item Every module is $\tau$-torsion free.
		
		\item Every simple module embeds in a flat module.
		
		\item If $M$ is a non-zero $<\kappa$-presented module for some infinite regular cardinal $\kappa>w(\mathcal S)$, there exists a non-zero $<\kappa$-presented flat module $F$ and a nonzero morphism $f:M \rightarrow F$.
	\end{enumerate}
\end{proposition}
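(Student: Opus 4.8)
The plan rests on the torsion-theoretic dictionary already available. By Proposition~\ref{p:CharacterizationFlats}(1) a flat module is flat in $\Flatl{\mathcal S}$ exactly when it is $\tau$-cotorsion-free, and by Proposition~\ref{p:CharacterizationEpimorphisms}(1) a morphism of flat modules is an epimorphism in $\Flatl{\mathcal S}$ exactly when its image is $\tau$-dense; so assertion (4) is just the statement $\mathcal T_\tau=0$, that is, that no module has a proper $\tau$-dense submodule. I would organise the proof around the hub (1)$\Leftrightarrow$(4) together with the side equivalences (3)$\Leftrightarrow$(5) and (4)$\Leftrightarrow$(6), and then close the loop with the single hard implication (3)$\Rightarrow$(4).

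For the routine implications: \emph{(4)$\Rightarrow$(1)} because if $K$ is $\tau$-dense in a flat $F$ then $F/K\in\mathcal T_\tau=0$, so $K=F$ and $F$ is $\tau$-cotorsion-free; \emph{(1)$\Rightarrow$(2)} because every epimorphism of $\Flatl{\mathcal S}$ is then pure, hence an epimorphism in $\Modl{\mathcal S}$ by Lemma~\ref{l:Exactness}; \emph{(2)$\Rightarrow$(3)} because an epimorphism $g\colon G\to H_a$ of $\Flatl{\mathcal S}$ is, by (2), an epimorphism in $\Modl{\mathcal S}$, hence splits ($H_a$ being projective) and so is pure, giving that $H_a$ is flat in $\Flatl{\mathcal S}$; and \emph{(1)$\Rightarrow$(4)} because, given $T\in\mathcal T_\tau$, an epimorphism $F\to T$ with $F=\bigoplus_iH_{a_i}$ (flat and projective) and kernel $K$ has $K$ $\tau$-dense in $F$, which by (1) forces $K=F$ and $T=0$.

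For the side equivalences: \emph{(4)$\Rightarrow$(6)} since a nonzero $<\kappa$-presented $M$ is not $\tau$-torsion, so there is a nonzero $f\colon M\to X$ with $X$ flat, and by Theorem~\ref{t:Purification}(1) together with Proposition~\ref{p:KappaPresented}(2) one may replace $X$ by a pure -- hence flat -- submodule of cardinality $<\kappa$ (so $<\kappa$-presented) containing $\Img f$; \emph{(6)$\Rightarrow$(4)} since a nonzero $T\in\mathcal T_\tau$ is $<\kappa$-presented for any regular $\kappa>\max\{w(\mathcal S),|T|\}$, so (6) would supply a nonzero morphism $T\to X$ with $X$ flat, which is absurd. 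For (3)$\Leftrightarrow$(5): every simple module is a quotient $H_a/J$ with $J$ proper, and $H_a/J$ is $\tau$-torsion iff $J$ is $\tau$-dense in $H_a$; moreover every nonzero finitely generated module -- in particular $H_a/I$ for a proper $\tau$-dense $I\le H_a$ -- has a simple quotient, so (3) is equivalent to ``no simple module is $\tau$-torsion'', which is exactly (5). Likewise \emph{(4)$\Rightarrow$(5)}: for simple $S$ one gets $\rej_{\Flatl{\mathcal S}}(S)\neq S$, hence $\rej_{\Flatl{\mathcal S}}(S)=0$, so $S\in\Cogen(\Flatl{\mathcal S})$ and, being simple, embeds into a single flat module.

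The implication (3)$\Rightarrow$(4) is the one requiring real care. The difficulty is that the class of $\tau$-cotorsion-free modules is closed under quotients and extensions but, by Proposition~\ref{p:PropertiesCotorsionFree}, is only known to be closed under arbitrary direct sums once $\tau$ is hereditary -- whereas here we must pass from the bare hypothesis that every representable is $\tau$-cotorsion-free (i.e.\ $\mathcal U_\tau(H_a)=\{H_a\}$ for every $a$) to the conclusion $\mathcal T_\tau=0$, equivalently, via Proposition~\ref{p:DenseInducedByRepresentables}, to the hereditariness of $\tau$. The route I would pursue is: take a hypothetical nonzero $T\in\mathcal T_\tau$, fix a flat presentation $P=\bigoplus_iH_{a_i}\to T$ with kernel $K$ a proper $\tau$-dense submodule, and force $K=P$ by exploiting how $K$ meets the summands $H_{a_i}$ -- each of which, by (3), has no proper $\tau$-dense submodule -- together with the ``no simple module is $\tau$-torsion'' reformulation of (3) applied to the finitely generated quotients that arise along a well-ordering of the index set. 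Making this propagation from the representables to $\bigoplus_iH_{a_i}$ rigorous -- i.e.\ controlling the $\tau$-torsion-ness of the intervening subquotients without a circular appeal to hereditariness -- is the step I expect to be genuinely hard; once it is done the cycle $(1)\Rightarrow(2)\Rightarrow(3)\Rightarrow(4)\Rightarrow(1)$ closes and, with the side equivalences above, all six statements coincide.
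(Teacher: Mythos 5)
Most of your proposal is sound and lines up with the paper's own argument: the dictionary (flat in $\Flatl{\mathcal S}$ means $\tau$-cotorsion-free, epimorphism in $\Flatl{\mathcal S}$ means $\tau$-dense image), the implications (1)$\Rightarrow$(2)$\Rightarrow$(3), (4)$\Rightarrow$(1), (4)$\Rightarrow$(5), (4)$\Leftrightarrow$(6), and your (5)$\Leftrightarrow$(3) via maximal submodules all match the paper (which proves (5)$\Rightarrow$(3) exactly as you do and obtains (3)$\Rightarrow$(5) around the cycle). Your direct proof of (1)$\Rightarrow$(4) is correct and cleaner than anything the paper states explicitly, but it does not help you close the loop: in your implication graph the only arrow leaving $\{3,5\}$ towards $\{1,4,6\}$ is (3)$\Rightarrow$(4), which you explicitly leave unproved.

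That is the genuine gap. You have correctly diagnosed the obstruction --- you need $\bigoplus_i H_{a_i}$ to be $\tau$-cotorsion-free knowing only that each $H_{a_i}$ is, and Proposition \ref{p:PropertiesCotorsionFree} gives closure under direct sums only for hereditary $\tau$ --- but the well-ordering/propagation scheme you sketch is not an argument, and I do not see how to make it one without first obtaining that closure property. The paper's resolution goes through exactly the point you isolated: hypothesis (3) says precisely that $\mathcal U_\tau(H_a)=\{H_a\}$ for every $a \in \mathcal S$, this (trivial) family is a Gabriel topology, and the paper invokes Proposition \ref{p:TopologiesAndTorsion} to conclude that $\tau$ is hereditary. (Observe that by Proposition \ref{p:DenseInducedByRepresentables}(2), under hypothesis (3) the hereditariness of $\tau$ is \emph{equivalent} to $\mathcal T_\tau=0$, so this is where all the content of the implication sits; it is the step of the published proof worth scrutinizing most closely.) Once $\tau$ is hereditary, Proposition \ref{p:PropertiesCotorsionFree} makes $P=\bigoplus_i H_{a_i}$ $\tau$-cotorsion-free; the kernel of a presentation $P\rightarrow T$ of a module $T\in\mathcal T_\tau$ is $\tau$-dense, hence equals $P$, and $T=0$. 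You should replace your sketch with this argument, or with some other proof that (3) forces $\tau$ to be hereditary; as it stands, the six assertions have not been shown to be equivalent.
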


\begin{proof}
	(1) $\Rightarrow$ (2). Trivial by Proposition \ref{p:CharacterizationFlats}.
	
	(2) $\Rightarrow$ (3). Again by Proposition \ref{p:CharacterizationFlats}.
	
	(3) $\Rightarrow$ (4). First notice that $\mathcal U_\tau(H_a)=\{H_a\}$ for every $a \in \mathcal S$. Then, $\{\mathcal U_\tau(H_a) \mid a \in \mathcal S\}$ is a Gabriel topology so that, by Proposition \ref{p:TopologiesAndTorsion}, $\tau$ is hereditary.
	
	Given any $T \in \mathcal T_\tau$ and an epimorphism $f:P \rightarrow T$ in $\Modl{\mathcal S}$, with $P$ a direct sum of representables. Now, take a morphism $g:Q \rightarrow P$ with $\Img g=\Ker f$ and $Q$ proyective in $\Modl{\mathcal S}$. Since $\Ker f$ is $\tau$-dense in $P$, $f$ is an epimorphism in $\Flatl{\mathcal S}$. By Proposition \ref{p:PropertiesFlats}, $P$ is flat in $\Flatl{\mathcal S}$, so that $f$ is an epimorphism in $\Modl{\mathcal S}$. This implies that $\Ker f=P$ and $T=0$.
	
	(4) $\Rightarrow$ (1). Trivial.
	
	(4) $\Rightarrow$ (5). If $S$ is simple, there exists a non-zero morphism $f:S \rightarrow F$ with $F$ flat. But this morphism has to be monic.
	
	(5) $\Rightarrow$ (3). Suppose that (3) is not true. Then there exists a representable functor $H_a$ which is not flat. By Proposition \ref{p:CharacterizationFlats}, $H_a$ contains a proper $\tau$-dense submodule $D$. Then, $D$ is contained in a maximal submodule $M$ \cite[p. 21]{Mitchell}. Then, $H_a/M$ is $\tau$-torsion, being a quotient of $H_a/D$. In particular, $H_a/M$ does not embed in a flat module.
	
	(4) $\Rightarrow$ (6). Given any nonzero $<\kappa$-presented module there exists a nonzero morphism $f:M \rightarrow F$ for some flat module. Now $f(M)$ is $<\kappa$-generated and by Proposition \ref{p:KappaPresented}, it is $<\kappa$-presented. By Theorem \ref{t:Purification}, there exists a $<\kappa$-presented pure submodule $K$ of $F$ such that $f(M) \leq K$. Since $K$ is flat, (6) holds.
	
	(6) $\Rightarrow$ (4). Trivial.
\end{proof}

As a consequence of the preceding result we get:

\begin{corollary}
	Suppose that $\Flatl{\mathcal S}$ is abelian. Then every flat module is flat in $\Flatl{\mathcal S}$ if and only if $\Flatl{\mathcal S} = \Modl{\mathcal S}$, i. e., $\Modl{\mathcal S}$ is regular in the sense of von Neumann.
\end{corollary}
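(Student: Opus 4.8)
The plan is to read off the corollary from the structure results already established for abelian $\Flatl{\mathcal S}$, chiefly Corollary \ref{c:TorsionPanoramic}, Theorem \ref{t:Panoramic} and Proposition \ref{p:CharacterizationFlats}. By Proposition \ref{p:CharacterizationFlats}(1), a flat module is flat in $\Flatl{\mathcal S}$ if and only if it is $\tau$-cotorsion-free, and $\Flatl{\mathcal S}=\Modl{\mathcal S}$ holds if and only if the global weak dimension of $\Modl{\mathcal S}$ is $0$; so the content of the statement is: assuming $\Flatl{\mathcal S}$ abelian, every flat module is $\tau$-cotorsion-free if and only if that weak dimension is $0$.

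First I would dispose of the easy implication. If $\Flatl{\mathcal S}=\Modl{\mathcal S}$ then $\mathcal T_\tau=0$, since any $\tau$-torsion module $T$ is flat and $\operatorname{id}_T\in\Hom(T,T)$ forces $T=0$; hence no nonzero module has a proper $\tau$-dense submodule and every module, a fortiori every flat module, is $\tau$-cotorsion-free. (This direction does not use that $\Flatl{\mathcal S}$ is abelian.)

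For the converse I would argue by contraposition. If $\Flatl{\mathcal S}\neq\Modl{\mathcal S}$ then, by Corollary \ref{c:TorsionPanoramic}(1), the global weak dimension of $\Modl{\mathcal S}$ is $2$, so some module $M$ has flat dimension exactly $2$. Choose a short exact sequence $0\to K\to F\to M\to 0$ with $F$ flat (so $K$ is not flat). Since $\Flatl{\mathcal S}$ is abelian, Theorem \ref{t:Panoramic} says that the inclusion $K\hookrightarrow\rej_{\Flatl{\mathcal S}}^{K}(F)$ is a flat reflection of $K$; set $G:=\rej_{\Flatl{\mathcal S}}^{K}(F)$, a flat module. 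From the identity $\rej_{\Flatl{\mathcal S}}(F/K)=\rej_{\Flatl{\mathcal S}}^{K}(F)/K$ and Corollary \ref{c:TorsionPanoramic}(3) we obtain $G/K=\rej_{\Flatl{\mathcal S}}(M)=\tau(M)$, and by Lemma \ref{l:Reflection} the submodule $K$ is $\tau$-dense in $G$. Now Corollary \ref{c:TorsionPanoramic}(4) gives the crucial point: as $M$ has flat dimension $2>1$, $M$ is not $\tau$-torsion-free, so $\tau(M)\neq 0$, i.e. $K\subsetneq G$. Hence $G$ is a flat module with a proper $\tau$-dense submodule, so $G$ is not $\tau$-cotorsion-free, and therefore not every flat module is flat in $\Flatl{\mathcal S}$.

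The only delicate step is confirming $G/K\neq 0$, which is exactly where the equivalence between $\tau$-torsion-freeness and flat dimension $\le 1$ (Corollary \ref{c:TorsionPanoramic}(4)) is used, and it is the sole place where the hypothesis that the weak dimension equals $2$ enters; the rest is a straightforward combination of the description of cokernels in $\Flatl{\mathcal S}$ via flat reflections with the equality $\tau=\rej_{\Flatl{\mathcal S}}$.
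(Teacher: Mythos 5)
Your proof is correct, and for the nontrivial direction it takes a different route from the paper's. Both arguments ultimately rest on the same two facts from Corollary \ref{c:TorsionPanoramic}, namely the dichotomy that the weak global dimension is $0$ or $2$ and the equivalence between $\tau$-torsion-freeness and flat dimension at most $1$. The paper, however, argues directly: it invokes Proposition \ref{p:AllObjectsAreFlat} (the equivalence between ``every object of $\Flatl{\mathcal S}$ is flat in $\Flatl{\mathcal S}$'' and ``every module is $\tau$-torsion-free'') to conclude that all modules are $\tau$-torsion-free, hence of flat dimension at most $1$ by Corollary \ref{c:TorsionPanoramic}(4), and then the dichotomy of Corollary \ref{c:TorsionPanoramic}(1) forces the weak dimension to be $0$. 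You instead argue by contraposition and never touch Proposition \ref{p:AllObjectsAreFlat}: starting from a module $M$ of flat dimension exactly $2$, you use Theorem \ref{t:Panoramic}, Lemma \ref{l:Reflection} and the identity $\tau=\rej_{\Flatl{\mathcal S}}$ to produce an explicit flat module $G=\rej_{\Flatl{\mathcal S}}^{K}(F)$ containing its syzygy $K$ as a proper $\tau$-dense submodule, so that $G$ fails to be $\tau$-cotorsion-free and hence is not flat in $\Flatl{\mathcal S}$ by Proposition \ref{p:CharacterizationFlats}. The paper's proof is shorter because Proposition \ref{p:AllObjectsAreFlat} already packages the conversion between the two conditions; yours is more constructive, exhibiting a concrete witness, at the cost of re-deriving part of what that proposition provides. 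Your observation that the easy implication needs no abelianness is also accurate and slightly sharper than the paper's ``trivial.''
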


\begin{proof}
	If $\Flatl{\mathcal S}$ is regular in the sense of von Neumann, the result is trivial. Conversely, if every module in $\Flatl{\mathcal S}$ is flat in $\Flatl{\mathcal S}$, then every module is $\tau$-torsion-free by the preceding result. By Corollary \ref{c:TorsionPanoramic}(4), the weak global dimension of $\Modl{\mathcal S}$ is less than or equal to $1$. But by (1) of the same corollary, this weak global dimension has to be $0$, i. e., $\Modl{\mathcal S}$ is regular in the sense of von Neumann.
\end{proof}

We conclude this section characterizing finitely accessible additive categories with enough flat objects. We say that \textit{there are enough flat (resp. projective) objects in $\Flatl{\mathcal S}$} if for every $F \in \Flatl{\mathcal S}$ there exists an epimorphism in $\Flatl{\mathcal S}$, $f:G \rightarrow F$, with $G$ flat (resp. projective) in $\Flatl{\mathcal S}$.

\begin{theorem}\label{t:ExistenceEnoughFlats}
	Suppose that $\tau$ is hereditary. The following assertions are equivalent:
	\begin{enumerate}
		\item $\Flatl{\mathcal S}$ has enough flat objects.
		
		\item $\tau$ is jansian and there exists an epimorphism in $\Modl{\mathcal S}$, $f^a:G_a \rightarrow L_\tau(H_a)$ with $G_a$ being flat (in $\Modl{\mathcal S}$) and $\tau$-cotorsion-free for every $a \in \mathcal S$.
		
		\item There exists a flat (in $\Modl{\mathcal S}$) pseudoprojective and $\tau$-cotorsion-free module $G$ such that $\mathcal T_\tau = \{X \in \Modl{\mathcal S} \mid \Hom(G,X)=0\}$.
	\end{enumerate}
\end{theorem}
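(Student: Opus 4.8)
The plan is to prove the cycle (1)$\Rightarrow$(2)$\Rightarrow$(3)$\Rightarrow$(1). Throughout I will use that an object of $\Flatl{\mathcal S}$ is flat in $\Flatl{\mathcal S}$ exactly when it is $\tau$-cotorsion-free (Proposition \ref{p:CharacterizationFlats}) and that a morphism of $\Flatl{\mathcal S}$ is an epimorphism there exactly when its image is $\tau$-dense in its codomain (Proposition \ref{p:CharacterizationEpimorphisms}).

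For (1)$\Rightarrow$(2): each representable $H_a$ belongs to $\Flatl{\mathcal S}$, so by hypothesis there is an epimorphism $g\colon G\to H_a$ in $\Flatl{\mathcal S}$ with $G$ flat and $\tau$-cotorsion-free. Then $\Img g$ is $\tau$-dense in $H_a$, and $\Img g$, being a quotient of $G$, is $\tau$-cotorsion-free by Proposition \ref{p:PropertiesCotorsionFree}. I would then argue that $\Img g=L_\tau(H_a)$: the inclusion $L_\tau(H_a)\le\Img g$ holds because $\Img g$ is $\tau$-dense; for the reverse inclusion, using that $\tau$ is hereditary, each $\tau$-dense submodule $K\le H_a$ meets $\Img g$ in a $\tau$-dense submodule of $\Img g$, whence $L_\tau(\Img g)\le L_\tau(H_a)$, while $L_\tau(\Img g)=\Img g$ since $\Img g$ is $\tau$-cotorsion-free. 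Thus $L_\tau(H_a)=\Img g$ is $\tau$-dense in $H_a$ for every $a$, so $\tau$ is jansian by Proposition \ref{p:CharacterizationJansian}, and the corestriction $f^a\colon G\to L_\tau(H_a)$ of $g$ is the required epimorphism with $G$ flat and $\tau$-cotorsion-free.

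For (2)$\Rightarrow$(3): put $G=\bigoplus_{a\in\mathcal S}G_a$, which is flat and, since $\tau$ is hereditary, $\tau$-cotorsion-free (Proposition \ref{p:PropertiesCotorsionFree}); the $f^a$ assemble into an epimorphism $f\colon G\to P:=\bigoplus_{a}L_\tau(H_a)$. As $\tau$ is jansian, Proposition \ref{p:CharacterizationJansian} gives that $\mathcal C_\tau=\Gen(P)$ equals the class of all $\tau$-cotorsion-free modules and $\mathcal T_\tau=\{X\mid\Hom(P,X)=0\}$. Since $G\in\mathcal C_\tau=\Gen(P)$ and $P\in\Gen(G)$ via $f$, we get $\Gen(G)=\Gen(P)=\mathcal C_\tau$, which is a torsion class, hence closed under extensions. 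A short diagram chase through $f$ and the $f^a$ shows $\mathcal F_G:=\{X\mid\Hom(G,X)=0\}=\mathcal T_\tau$ (for $\supseteq$ use $\Hom(G_a,X)=0$ for $X\in\mathcal T_\tau$ by torsion-theoretic orthogonality, since $G_a\in\mathcal C_\tau$; for $\subseteq$ precompose with the epimorphism $f^a$ and apply Proposition \ref{p:CharacterizationJansian}(4)), and $\mathcal T_\tau$ is closed under quotients. By Proposition \ref{p:Pseudoprojective}, $G$ is then pseudoprojective, so $G$ is flat, $\tau$-cotorsion-free and satisfies $\mathcal T_\tau=\{X\mid\Hom(G,X)=0\}$, i.e. (3) holds.

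For (3)$\Rightarrow$(1): since $G$ is pseudoprojective, $\Gen(G)$ is closed under extensions, so by Lemma \ref{l:GenMClosedExtensions} the trace $\tr_G$ is a radical; hence $\Hom(G,F/\tr_G(F))=0$ for every module $F$, which by the hypothesis on $\mathcal T_\tau$ means $F/\tr_G(F)\in\mathcal T_\tau$, i.e. $\tr_G(F)$ is $\tau$-dense in $F$. For $F\in\Flatl{\mathcal S}$ the canonical morphism $G^{(\Hom(G,F))}\to F$ therefore has $\tau$-dense image, so it is an epimorphism in $\Flatl{\mathcal S}$ by Proposition \ref{p:CharacterizationEpimorphisms}, and its source is flat in $\Flatl{\mathcal S}$ (a direct sum of copies of the flat, $\tau$-cotorsion-free module $G$, again using that $\tau$ is hereditary). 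Thus $\Flatl{\mathcal S}$ has enough flat objects. I expect the jansian-ness step in (1)$\Rightarrow$(2) to be the main obstacle: one must see that the image of a flat-in-$\Flatl{\mathcal S}$ cover of $H_a$ is forced to coincide with $L_\tau(H_a)$, which is precisely where hereditariness of $\tau$ and the behaviour of $L_\tau$ under passage to $\tau$-dense submodules are indispensable; the remaining implications then reduce to the torsion-theoretic bookkeeping already developed.
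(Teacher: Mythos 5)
Your proposal is correct, and for the implications (1)$\Rightarrow$(2) and (2)$\Rightarrow$(3) it follows essentially the same route as the paper: identify $\Img g$ with $L_\tau(H_a)$ to deduce jansian-ness via Proposition \ref{p:CharacterizationJansian}, then pass to $G=\bigoplus_a G_a$ and compare $\Gen(G)$ with $\Gen\bigl(\bigoplus_a L_\tau(H_a)\bigr)$ to transfer pseudoprojectivity through Proposition \ref{p:Pseudoprojective}(2). The only divergence in (1)$\Rightarrow$(2) is cosmetic: you obtain $\Img g\leq L_\tau(H_a)$ directly, by intersecting an arbitrary $\tau$-dense $K\leq H_a$ with the $\tau$-cotorsion-free module $\Img g$ and using hereditariness, whereas the paper routes the same inclusion through $\Img g\leq C_\tau(H_a)\leq L_\tau(H_a)$ and Lemma \ref{l:L=C}; both arguments are sound and use the same ingredients. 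The more substantive point is that the paper's written proof stops after (2)$\Rightarrow$(3) and never records (3)$\Rightarrow$(1), so your last paragraph actually supplies an implication the paper leaves implicit. Your argument for it is complete and correct: pseudoprojectivity makes $\tr_G$ a radical (Lemma \ref{l:GenMClosedExtensions}), so $F/\tr_G(F)\in\mathcal T_\tau$ by the hypothesis on $\mathcal T_\tau$, hence the trace morphism $G^{(\Hom(G,F))}\to F$ has $\tau$-dense image and is an epimorphism in $\Flatl{\mathcal S}$ from a source that is flat and (by hereditariness and Proposition \ref{p:PropertiesCotorsionFree}) $\tau$-cotorsion-free, i.e.\ flat in $\Flatl{\mathcal S}$.
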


\begin{proof}
	(1) $\Rightarrow$ (2). Let $a \in \mathcal S$ and an epimorphism in $\Flatl{\mathcal S}$, $f^a:G_a \rightarrow H_a$ with $G_a$ being flat in $\Flatl{\mathcal S}$. Since $G_a$ is $\tau$-cotorsion-free by Proposition \ref{p:CharacterizationFlats}, $\Img f \leq C_\tau(H_a)$ by Proposition \ref{p:PropertiesCotorsionFree}. But $\Img f$ is $\tau$-dense by Proposition \ref{p:CharacterizationEpimorphisms}, so that $L_\tau(H_a) \leq \Img f$. Then $L_\tau(H_a)=\Img f = C_\tau(H_a)$ and, by Lemma \ref{l:L=C}, $L_\tau(H_a)$ is $\tau$-dense. By Proposition   \ref{p:CharacterizationJansian}, $\tau$ is jansian. Finally, notice that $f^a$ induces an epimorphism in $\Modl{\mathcal S}$ from $G_a$ to $L_\tau(H_a)$.
	
	(2) $\Rightarrow$ (3). Let $P=\bigoplus_{a \in \mathcal S}L_\tau(H_a)$. By Proposition \ref{p:CharacterizationJansian}, $P$ is pseudoprojective, $\mathcal T_\tau=\{X \in \Modl{\mathcal S}\mid \Hom(P,X)=0\}$ and $\Gen(P)$ is the class of all $\tau$-cotorsion-free modules. Let $G=\bigoplus_{a \in \mathcal S}G_a$. Then $G$ is flat in $\Flatl{\mathcal S}$ by Proposition \ref{p:PropertiesFlats} and, since $G$ is $\tau$-cotorsion-free by Proposition \ref{p:CharacterizationFlats}, $G \in \Gen(P)$. But by (2), $P \in \Gen(G)$ as well, so that $\Gen(G)=\Gen(P)$. This implies that $\mathcal T_\tau=\{X \in \Modl{\mathcal S}\mid \Hom(P,X)=0\}=\{X \in \Modl{\mathcal S}\mid \Hom(F,X)=0\}$. Now, since $P$ is pseudoprojective  $\Gen(P)=\Gen(M)$ is closed under extensions and $\mathcal T_\tau$ is closed under quotients, which implies that $G$ is pseudoprojective as well by the same proposition.
\end{proof}

The following is a remarkable consequence:

\begin{corollary}\label{c:EnoughFlatsImpliesPureSubmodules}
	Suppose that $\tau$ is hereditary and that there are enough flat objects in $\Flatl{\mathcal S}$. Then the class of flat objects in $\Flatl{\mathcal S}$ is closed under pure submodules.
\end{corollary}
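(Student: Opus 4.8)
The plan is to deduce the statement directly from Proposition \ref{p:PropertiesFlats}(4), whose single hypothesis is that $\tau$ be jansian; so the only real step is to promote ``$\tau$ hereditary $+$ enough flat objects'' to ``$\tau$ jansian''.

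First I would invoke the implication (1) $\Rightarrow$ (2) of Theorem \ref{t:ExistenceEnoughFlats}: under the standing assumption that $\tau$ is hereditary, the existence of enough flat objects in $\Flatl{\mathcal S}$ forces $\tau$ to be jansian. (Recall how this goes: from an epimorphism $f^a\colon G_a \to H_a$ in $\Flatl{\mathcal S}$ with $G_a$ flat in $\Flatl{\mathcal S}$ one gets $L_\tau(H_a) \leq \Img f^a \leq C_\tau(H_a) \leq L_\tau(H_a)$, so each $L_\tau(H_a)$ is $\tau$-dense in $H_a$, and Proposition \ref{p:CharacterizationJansian} applies.) Once $\tau$ is known to be jansian, Proposition \ref{p:PropertiesFlats}(4) says exactly that the class of flat objects in $\Flatl{\mathcal S}$ is closed under pure submodules, which is the assertion.

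If one wants the argument self-contained it suffices to recall the mechanism behind Proposition \ref{p:PropertiesFlats}(4): by Proposition \ref{p:CharacterizationFlats} ``flat in $\Flatl{\mathcal S}$'' is the same as ``$\tau$-cotorsion-free''; by Corollary \ref{c:PureSubobjectsJansian} the class of $\tau$-cotorsion-free modules is closed under pure subobjects when $\tau$ is jansian (this is where the pseudoprojective module $P$ with $\Gen(P)$ equal to the class of cotorsion-free modules does the work); and a pure submodule of a flat module is flat in $\Modl{\mathcal S}$, hence lands in $\Flatl{\mathcal S}$ and is flat there. There is no genuine obstacle here, since everything has already been set up in Theorem \ref{t:ExistenceEnoughFlats} and Corollary \ref{c:PureSubobjectsJansian}; the only point deserving a sentence of care is that ``pure submodule'' may be read either in $\Modl{\mathcal S}$ or in the pure-exact structure of $\Flatl{\mathcal S}$, but by Lemma \ref{l:Exactness} these coincide on short exact sequences of flat modules, so the same proof covers both readings.
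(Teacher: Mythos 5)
Your proposal is correct and follows exactly the paper's own route: Theorem \ref{t:ExistenceEnoughFlats} (1)~$\Rightarrow$~(2) yields that $\tau$ is jansian, and then Proposition \ref{p:PropertiesFlats}(4) (via Proposition \ref{p:CharacterizationFlats} and Corollary \ref{c:PureSubobjectsJansian}) gives the closure under pure submodules. The extra unpacking of the mechanism and the remark on the two readings of ``pure submodule'' are consistent with the paper but not needed beyond what the cited results already provide.
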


\begin{proof}
	Follows from the preceding result and Proposition \ref{p:PropertiesFlats}.
\end{proof}

We can obtain a similar characterization of finitely accessible additive categories having enough projective objects.

\begin{theorem}\label{t:ExistenceEnoughProjectives}
	Suppose that $\tau$ is hereditary. The following assertions are equivalent:
	\begin{enumerate}
		\item $\Flatl{\mathcal S}$ has enough projective objects.
		
		\item $\tau$ is jansian and there exists an epimorphism in $\Modl{\mathcal S}$, $f^a:P_a \rightarrow L_\tau(H_a)$ with $P_a$ being projective (in $\Modl{\mathcal S}$) and $\tau$-cotorsion-free for every $a \in \mathcal S$.
		
		\item There exists a projective (in $\Modl{\mathcal S}$) and $\tau$-cotorsion-free module $P$ such that $\mathcal T_\tau = \{X \in \Modl{\mathcal S} \mid \Hom(P,X)=0\}$.
	\end{enumerate}
\end{theorem}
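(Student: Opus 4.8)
The plan is to run, with minimal changes, the argument of Theorem~\ref{t:ExistenceEnoughFlats}, systematically replacing ``flat in $\Modl{\mathcal S}$'' by ``projective in $\Modl{\mathcal S}$''. The bridge between the two statements is Proposition~\ref{p:CharacterizationFlats}(2): a module in $\Flatl{\mathcal S}$ is projective in $\Flatl{\mathcal S}$ precisely when it is projective in $\Modl{\mathcal S}$ and $\tau$-cotorsion-free. Note also that a projective module is automatically pseudoprojective (lift along the given epimorphism and take the identity endomorphism), which explains why the hypothesis ``$P$ pseudoprojective'' present in Theorem~\ref{t:ExistenceEnoughFlats}(3) is absorbed and disappears from (3) here. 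I would prove the cycle (1)~$\Rightarrow$~(2)~$\Rightarrow$~(3)~$\Rightarrow$~(1).

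For (1)~$\Rightarrow$~(2): for each $a\in\mathcal S$ pick an epimorphism $f^a\colon P_a\rightarrow H_a$ in $\Flatl{\mathcal S}$ with $P_a$ projective in $\Flatl{\mathcal S}$; by Proposition~\ref{p:CharacterizationFlats}(2), $P_a$ is projective in $\Modl{\mathcal S}$ and $\tau$-cotorsion-free. Then $\Img f^a$, being a quotient of $P_a$, is $\tau$-cotorsion-free (Proposition~\ref{p:PropertiesCotorsionFree}), so $\Img f^a\leq C_\tau(H_a)$; and $\Img f^a$ is $\tau$-dense in $H_a$ by Proposition~\ref{p:CharacterizationEpimorphisms}(1), so $L_\tau(H_a)\leq\Img f^a\leq C_\tau(H_a)\leq L_\tau(H_a)$. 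Hence $L_\tau(H_a)=C_\tau(H_a)$ is $\tau$-dense in $H_a$ (Lemma~\ref{l:L=C}), $\tau$ is jansian by Proposition~\ref{p:CharacterizationJansian}, and $f^a$ corestricts to an epimorphism $P_a\rightarrow L_\tau(H_a)$.

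For (2)~$\Rightarrow$~(3): put $P=\bigoplus_{a\in\mathcal S}P_a$, which is projective in $\Modl{\mathcal S}$ and, $\tau$ being hereditary, $\tau$-cotorsion-free (Proposition~\ref{p:PropertiesCotorsionFree}). Let $Q=\bigoplus_{a\in\mathcal S}L_\tau(H_a)$; Proposition~\ref{p:CharacterizationJansian} gives $\mathcal T_\tau=\{X\mid\Hom(Q,X)=0\}$ and identifies $\Gen(Q)$ with the class of all $\tau$-cotorsion-free modules. The $f^a$ assemble into an epimorphism $P\rightarrow Q$, so $Q$ is a quotient of $P$; and $P$ is $\tau$-cotorsion-free, so $P\in\Gen(Q)$, i.e.\ $P$ is a quotient of a coproduct of copies of $Q$. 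Hence $\Hom(P,X)=0$ if and only if $\Hom(Q,X)=0$ for every $X$, so $\mathcal T_\tau=\{X\mid\Hom(P,X)=0\}$, which is (3).

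For (3)~$\Rightarrow$~(1): since $P$ is projective it is pseudoprojective, so $(\Gen(P),\mathcal T_\tau)$ is a torsion theory by Proposition~\ref{p:Pseudoprojective}; taking $\mathcal C_\tau:=\Gen(P)$ this shows $\tau$ is jansian, and Proposition~\ref{p:CharacterizationJansian} then identifies $\Gen(P)=\mathcal C_\tau$ with the class of all $\tau$-cotorsion-free modules. Now let $F\in\Flatl{\mathcal S}$ be arbitrary. As $\tau$ is jansian, $C_\tau(F)=L_\tau(F)$ is $\tau$-dense in $F$ (Lemma~\ref{l:L=C} and Proposition~\ref{p:CharacterizationJansian}), and it is $\tau$-cotorsion-free, hence $C_\tau(F)\in\Gen(P)$ and there is an epimorphism $h\colon P^{(I)}\rightarrow C_\tau(F)$ in $\Modl{\mathcal S}$. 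The module $P^{(I)}$ is projective in $\Modl{\mathcal S}$ and, again because $\tau$ is hereditary, $\tau$-cotorsion-free, so it is projective in $\Flatl{\mathcal S}$ by Proposition~\ref{p:CharacterizationFlats}(2); composing $h$ with $C_\tau(F)\hookrightarrow F$ gives a morphism $P^{(I)}\rightarrow F$ in $\Flatl{\mathcal S}$ whose image $C_\tau(F)$ is $\tau$-dense in $F$, hence an epimorphism in $\Flatl{\mathcal S}$ by Proposition~\ref{p:CharacterizationEpimorphisms}(1). Thus $\Flatl{\mathcal S}$ has enough projectives. The step I expect to require the only genuine work is this implication (3)~$\Rightarrow$~(1): from the single module $P$ one must simultaneously recover that $\tau$ is jansian and that $\Gen(P)$ is exactly the class of $\tau$-cotorsion-free modules before an arbitrary $F$ can be handled by covering $C_\tau(F)$; the rest is the flat argument verbatim plus routine stability of projectivity and $\tau$-cotorsion-freeness under direct sums.
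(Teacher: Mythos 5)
Your proposal is correct and follows exactly the route the paper intends: the paper states Theorem \ref{t:ExistenceEnoughProjectives} without proof as the ``projective'' analogue of Theorem \ref{t:ExistenceEnoughFlats}, and your implications (1) $\Rightarrow$ (2) $\Rightarrow$ (3) reproduce that proof verbatim with Proposition \ref{p:CharacterizationFlats}(2) as the bridge, together with the correct observation that projectivity absorbs the pseudoprojectivity hypothesis in (3). You also do something the paper does not: the printed proof of Theorem \ref{t:ExistenceEnoughFlats} stops after (2) $\Rightarrow$ (3) and never establishes (3) $\Rightarrow$ (1), whereas your argument --- deducing that $\tau$ is jansian from Proposition \ref{p:Pseudoprojective}, identifying $\Gen(P)$ with the $\tau$-cotorsion-free modules via Proposition \ref{p:CharacterizationJansian}, and then covering $C_\tau(F)=L_\tau(F)$ by a copower of $P$ --- correctly closes the cycle and is the only part requiring genuinely new work.
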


Now we apply our results to give examples of finitely accessible additive categories with enough projective and flat objects.

\begin{corollary}\label{c:LocallyArtinian}
	Suppose that $\tau$ is hereditary. 
	\begin{enumerate}
		\item If $C_\tau(H_a)$ has a projective cover for every $a \in \mathcal S$, then $\Flatl{\mathcal S}$ has enough projective objects if and only if $\tau$ is jansian.
		
		\item If $\Modl{\mathcal S}$ is locally artinian, $\Flatl{\mathcal S}$ has enough projective objects.
	\end{enumerate}
\end{corollary}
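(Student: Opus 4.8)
The plan is to derive both parts from Theorem \ref{t:ExistenceEnoughProjectives}. For part (1), one direction is immediate: if $\Flatl{\mathcal S}$ has enough projectives, then the implication (1)$\Rightarrow$(2) of Theorem \ref{t:ExistenceEnoughProjectives} already forces $\tau$ to be jansian, and the hypothesis on projective covers plays no role here. For the converse I would assume $\tau$ jansian and verify condition (2) of Theorem \ref{t:ExistenceEnoughProjectives}. Since $\tau$ is jansian, Proposition \ref{p:CharacterizationJansian} gives that $L_\tau(H_a)$ is $\tau$-dense in $H_a$ for every $a \in \mathcal S$, whence $C_\tau(H_a) = L_\tau(H_a)$ by Lemma \ref{l:L=C}. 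By hypothesis there is a projective cover $f^a \colon P_a \to C_\tau(H_a) = L_\tau(H_a)$, i.e. a superfluous epimorphism in $\Modl{\mathcal S}$ with $P_a$ projective. As $C_\tau(H_a)$ is $\tau$-cotorsion-free, Proposition \ref{p:PropertiesCotorsionFree} applied to the superfluous epimorphism $f^a$ yields that $P_a$ is $\tau$-cotorsion-free as well. Thus the family $(f^a)_{a \in \mathcal S}$ is precisely what condition (2) of Theorem \ref{t:ExistenceEnoughProjectives} demands, so $\Flatl{\mathcal S}$ has enough projective objects.

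For part (2) I would check the two hypotheses of part (1). Each representable functor $H_a = \langle 1_a\rangle$ is finitely generated, hence a quotient of a finite direct sum of artinian generators, and therefore artinian, since $\Modl{\mathcal S}$ is locally artinian. To see that $\tau$ is jansian I would use Proposition \ref{p:CharacterizationJansian}(3), so it suffices that $L_\tau(H_a)$ is $\tau$-dense in $H_a$. For the hereditary torsion theory $\tau$ the set $\mathcal U_\tau(H_a)$ of $\tau$-dense submodules of $H_a$ is closed under finite intersections, so the non-empty set of all finite subintersections of members of $\mathcal U_\tau(H_a)$ has, by the descending chain condition on $H_a$, a minimal element $K_0$; minimality forces $K_0 \subseteq U$ for every $U \in \mathcal U_\tau(H_a)$, hence $K_0 = L_\tau(H_a)$, and $K_0$ is $\tau$-dense, being a finite intersection of $\tau$-dense submodules. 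Thus $\tau$ is jansian. For the projective covers I would argue that, since every representable is artinian, $\Modl{\mathcal S}$ is in the semiperfect situation: every artinian module has a projective cover (its radical is superfluous, its top is semisimple of finite length, and each simple module is the top of a local direct summand of some $H_a$, so a finite direct sum of such summands is the required cover). In particular $C_\tau(H_a)$, being a submodule of the artinian module $H_a$, admits a projective cover. Part (1) now applies and gives enough projective objects.

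The torsion-theoretic bookkeeping here is routine, being entirely packaged in Proposition \ref{p:CharacterizationJansian}, Proposition \ref{p:PropertiesCotorsionFree}, Lemma \ref{l:L=C} and Theorem \ref{t:ExistenceEnoughProjectives}. The only genuinely delicate point I foresee is the assertion in part (2) that artinian modules in $\Modl{\mathcal S}$ have projective covers: this is the ``semiperfect'' step, resting on the fact that once every $H_a$ is artinian the relevant endomorphism rings decompose into local pieces and every simple module occurs in the top of some $H_a$. If one prefers to avoid reproving this, the alternative is to invoke that a locally artinian Grothendieck category with a generating set of finitely generated projectives is perfect, so that $C_\tau(H_a)$ automatically has a projective cover.
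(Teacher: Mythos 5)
Your part (1) coincides with the paper's proof: both reduce to Theorem \ref{t:ExistenceEnoughProjectives} and observe that the projective cover of $C_\tau(H_a)=L_\tau(H_a)$, being a superfluous epimorphism onto a $\tau$-cotorsion-free module, has $\tau$-cotorsion-free (and projective) domain by Proposition \ref{p:PropertiesCotorsionFree}. The jansian step in part (2) is also essentially the paper's argument: the paper takes a minimal element $U$ of $\mathcal U_\tau(H_a)$ (which exists because $H_a$ is artinian), shows it is $\tau$-cotorsion-free, and concludes that $U=C_\tau(H_a)=L_\tau(H_a)$ is $\tau$-dense, whence $\tau$ is jansian by Proposition \ref{p:CharacterizationJansian}; your variant via a minimal finite intersection reaches the same conclusion and is equally valid.

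The one point where you go beyond the paper is the existence of a projective cover of $C_\tau(H_a)$ in the locally artinian case: the paper's proof of (2) simply says ``the result follows from (1)'' and leaves this unaddressed. Your hands-on justification has a genuine gap at the assertion that every simple module is the top of a local direct summand of some $H_a$. That assertion amounts to the semiperfectness of $\End(H_a)$, i.e.\ to lifting the decomposition of the semisimple top $H_a/J(H_a)$ to a direct sum decomposition of $H_a$. Artinian-ness gives you that $J(H_a)$ is superfluous with semisimple top of finite length, that $\End(H_a)$ is semilocal, and (by DCC) that $H_a$ is a finite direct sum of indecomposables; but none of this, on its own, yields that the indecomposable summands have local endomorphism rings or simple tops, since idempotents need not lift modulo the Jacobson radical of a semilocal ring. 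Nor can you bypass the splitting: if $S$ is a simple quotient of $H_a$ whose kernel properly contains $J(H_a)$, that kernel is not superfluous, so $H_a\twoheadrightarrow S$ is not itself a projective cover. Your fallback --- invoking that a locally artinian Grothendieck category with a set of finitely generated projective generators is perfect (a categorified Bass Theorem P) --- is the right way to close this and is evidently what the paper implicitly relies on; but as written your primary argument does not establish the projective-cover hypothesis needed to apply part (1).
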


\begin{proof}
	(1) In view of the preceding theorem, we have to prove that if $\tau$ is jansian, then there exists an epimorphism $f:P \rightarrow C_\tau(H_a)$ in $\Modl{\mathcal S}$ with $P$ being projective in $\Flatl{\mathcal S}$. But this epimorphism is, precisely, the projective cover of $C_\tau(H_a)$ as a consequence of Proposition \ref{p:PropertiesCotorsionFree}.
	
	(2) Given $a \in \mathcal S$, the set $\mathcal U_\tau(H_a)$ has a minimal element $U$. This minimal element is $\tau$-cotorsion-free, since any $\tau$-dense submodule $K$ of $U$ is $\tau$-dense in $H_a$, because $H_a/K$ is an extension of $U/K$ over $H_a/U$. Then $U=K$. This implies that $U \leq C_\tau(H_a) \leq C_\tau(H_a)$ and, in particular, that $L_\tau(H_a)$ is a $\tau$-dense submodule of $H_a$. By Proposition \ref{p:CharacterizationJansian}, $\tau$ is jansian. Now, the result follows from (1).
\end{proof}

\section{The particular case of the ring} \label{s:Ring}

We finish the paper illustrating our results in the particular case in which $\mathcal S$ consists on just one object, that is, for the category of flat modules over the ring $R$.

We use the plain terms \textit{torsion} and \textit{torsion-free} for the usual torsion and torsion-free modules over a ring (i. e., a module $M$ is \textit{torsion-free} if $rx \neq 0$ for any $x \in M$ and $r \in R$ which is not a left zero divisor; and it is \textit{torsion} if for any nonzero $x \in M$ we can find a $r \in R$ which is not a left zero divisor with $rx=0$. Notice that the pair of classes \textit{torsion} and \textit{torsion-free} do not form, in general, a torsion theory). Recall that every flat module is torsion free \cite[36.7]{Wisbauer}. The relationship of these modules with the torsion theory $\tau$ is:

\begin{lemma}
Every torsion module is $\tau$-torsion. If $R$ is commutative, every $\tau$-torsion-free module is torsion-free.
\end{lemma}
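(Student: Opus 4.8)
The plan is to deduce both statements directly from the definition of $\tau$ as the torsion theory cogenerated by the flat modules — here, in $\Modl{R}$ — together with the standard fact that flat $R$-modules are torsion-free \cite[36.7]{Wisbauer}.

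First I would show that a torsion module $M$ lies in $\mathcal T_\tau$, i.e. $\Hom_R(M,F)=0$ for every flat module $F$. Given $f\colon M\to F$ and a nonzero $x\in M$, torsion-ness supplies an element $r\in R$ which is not a left zero divisor with $rx=0$; then $r\,f(x)=f(rx)=0$, and since $F$ is torsion-free this forces $f(x)=0$. As $x$ was arbitrary, $f=0$, so $M\in\mathcal T_\tau$.

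For the second statement, assume $R$ commutative and take $M\in\mathcal F_\tau$; I would argue by contradiction. If $M$ is not torsion-free there are a nonzero $x\in M$ and a non-zero-divisor $r$ with $rx=0$. Consider the cyclic submodule $Rx\le M$: by commutativity, $r(sx)=(rs)x=(sr)x=s(rx)=0$ for every $s\in R$, so every element of $Rx$ is annihilated by the non-zero-divisor $r$, and hence $Rx$ is a torsion module. By the first part $Rx\in\mathcal T_\tau$; but $\mathcal F_\tau$ is closed under submodules (if $T\in\mathcal T_\tau$ then $\Hom(T,Rx)$ embeds in $\Hom(T,M)=0$), so $Rx\in\mathcal F_\tau$ as well, whence $Rx\in\mathcal T_\tau\cap\mathcal F_\tau=0$, contradicting $x\neq0$. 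Therefore $M$ is torsion-free.

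I do not expect a genuine obstacle, as both halves are short; the only place that truly uses the hypotheses is the step ``$rx=0$ implies $Rx$ is torsion'', which relies on commutativity — for a general ring $(rs)x=(sr)x$ fails and $rs$ need not be a non-zero-divisor — so that is the step I would state most carefully.
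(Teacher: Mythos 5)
Your proposal is correct and follows essentially the same route as the paper: the first half is the identical computation ($rf(x)=f(rx)=0$ plus torsion-freeness of flat modules), and for the second half the paper exhibits the nonzero map $R/Rr\to M$ from a torsion (hence $\tau$-torsion) module, which is just the quotient-presentation version of your observation that $Rx$ is a torsion submodule lying in $\mathcal T_\tau\cap\mathcal F_\tau$. Your identification of commutativity as the essential ingredient in showing the cyclic module is torsion matches the paper exactly.
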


\begin{proof}
	Suppose that $M$ is a torsion module and let $f:M \rightarrow F$ is a morphism with $F$ flat. Given $m \in M$ there exists a non left zero divisor $r$ such that $rm=0$. Then $rf(m)=0$ which means that $f(m)=0$ since $F$ is torsion free \cite[36.7]{Wisbauer}. Then $M$ is $\tau$-torsion.
	
	Suppose that $R$ is commutative and that $F$ is a module which is not torsion-free. Then there exists a non-zero $x \in F$ and an element $r \in R$ which is not a left zero divisor such that $rx=0$. But then there is a nonzero morphism from $R/Rr$ to $F$ and $R/Rr$ is a torsion module because $ry=0$ for every $y \in R/Rr$. In particular, $R/Rr$ is $\tau$-torsion and, consequently, $F$ is not $\tau$-torsion-free.
\end{proof}

Now we compute the torsion theory $\tau$ for some particular rings:

\begin{examples}\label{e:ExamplesTau}
	\begin{enumerate}
		\item The equality $\mathcal F_\tau=\Flatl{R}$ holds if and only if $\Modr{R}$ is locally coherent and has weak dimension less than or equal to $1$, that is, if $R$ is right semihereditary by \cite[Theorem 4.1]{Chase}.
		
		\item Suppose that $R$ is a Prüfer domain or a right Bezout domain, that is, a domain in which every finitely generated right ideal is cyclic. Then a module $F$ is flat if and only if it is torsion-free  by \cite[Theorem 4.35]{Rotman} and \cite[36.7]{Wisbauer}. Since $R$ is right semihereditary in both cases, then the class $\mathcal F_\tau$ is equal to the class of all torsion-free modules.
		
		If $R$ is a Prüfer domain or a (not necessarily commutative) discrete valuation domain, then $\tau$ is hereditary. Given any $M \in \Flatl R$ and a nonzero element $x$ in the injective envelope $E(M)$ take $r \in R$ with $rx=0$. Using that $M$ is essential in $E(M)$, we can find $t \in R$ with $0 \neq tx \in M$. If $R$ is commutative, then $rtx=0$ which implies that $r=0$, as $M$ is torsion-free. If $R$ is a discrete valuation domain, there exist $s_1,s_2 \in R$ with $rt=s_1r=ts_2$ (see \cite[p. 18]{KrylovTuganbaev}). Then $rtx=s_1rx=0$ and, again, $r=0$. In any case, $E(M)$ is flat, which implies that $\tau$ is hereditary by Proposition \ref{p:PropertiesOfTau}.
		
		\item If $R$ is is a left \textit{left IF}, that is, a ring in which every injective module is flat, then $\mathcal F_\tau=\Modl{R}$ by Proposition \ref{p:PropertiesOfTau}. These rings were introduced in \cite{Colby} and \cite{Jain}. By Corollary \ref{c:TorsionPanoramic}, if $\Flatl{R}$ is abelian, then $R$ is left FTF.
		
		\item A ring is left FTF \cite{GomezTorrecillas} if the class $\mathcal F'$ of all modules isomorphic to a submodule of a flat module is the torsion-free class of a hereditary torsion theory. For these rings, $\mathcal F_\tau=\mathcal F'$ and $\tau$ is hereditary.
		
		\item Suppose that $R$ is a \textit{fir}, that is, every left and right ideal is free of unique rank. Then $R$ is right hereditary and $\mathcal F_\tau=\Flatl{R}$. Moreover, $\mathcal T_\tau$ is the class of all \textit{bound} modules which consists of all modules $T$ such that $\Hom_R(T,R)=0$ (see \cite[Corollary 5.1.4]{Cohn}).
		
		\item By \cite[Theorem 2.2]{Khash}, if $R$ is commutative Noetherian and MQF-3 (i.e., $E(\prescript{}{R}{R})$ is flat), then the injective hull of every flat module is flat. By Proposition \ref{p:PropertiesOfTau}, $\tau$ is hereditary.
		
		\item The torsion theory $\tau$ is not, in general, hereditary. Let $T$ be the triangular matrix ring $\mat{\mathbb Q}{0}{\mathbb Q}{\mathbb Z}$. The ring $S$ is right hereditary so that, by (1), $\mathcal F_\tau=\Flatl T$. Using the description of left $T$-modules as pairs $\vec{M_1}{M_2}_\varphi$, where $\varphi:M_1 \rightarrow M_2$ is a morphism of $\mathbb Z$-modules, it is easy to prove, using \cite[Corollary 3.4.9]{KrylovTuganbaev}, that the injective hull of ${_T}T$ is the module
		\begin{displaymath}
			E=\vec{\mathbb Q}{0}_{\varphi_1} \oplus \vec{\Hom_{\mathbb Z}(\mathbb Q,\mathbb Q \oplus \mathbb Q)}{\mathbb Q \oplus \mathbb Q}_{\varphi_2}
		\end{displaymath}
		where $\varphi_1$ is the zero morphism and $\varphi_2(f)=f(1)$ for each $f \in \Hom_{\mathbb Z}(\mathbb Q,\mathbb Q \oplus \mathbb Q)$. But $\vec{\mathbb Q}{0}_{\varphi_1}$ is not flat by \cite[Corollary 3.6.6]{KrylovTuganbaev}. Then, $T$ is not left MQF-3 and, in particular, $\mathcal F_\tau$ is not closed under injective envelopes. This means that $\tau$ is not hereditary by Proposition \ref{p:PropertiesOfTau}.
	\end{enumerate}
\end{examples}

Now we see that if $R$ is a domain, there are no non-trivial flat objects in $\Flatl{\mathcal S}$.

\begin{proposition}\label{e:FlatsInDomains}
	Let $R$ be a domain.
	\begin{enumerate}
		\item If $M$ is a module and $K \leq M$ is an essential submodule, then $K$ is $\tau$-dense.
		
		\item $\Flatl{R}$ does not have non-trivial flat objects.
	\end{enumerate}
\end{proposition}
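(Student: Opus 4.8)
The plan is to prove (1) by a short direct computation and then to derive (2) from (1) together with two elementary facts about modules over a domain.

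For (1): since $K$ is $\tau$-dense in $M$ exactly when $M/K$ is $\tau$-torsion, and since by the first lemma of this section every (classically) torsion module is $\tau$-torsion, it suffices to show that $M/K$ is torsion. So take a nonzero element $x+K$ of $M/K$; then $x\ne 0$, hence $Rx\ne 0$, and essentiality of $K$ produces $r\in R$ with $0\ne rx\in K$. As $R$ is a domain, $r\ne 0$ (and $r$ is not a zero-divisor), while $r(x+K)=0$ in $M/K$; thus $x+K$ is a torsion element. Hence $M/K$ is torsion, so $\tau$-torsion, and $K$ is $\tau$-dense. I do not expect any difficulty in this part.

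For (2): let $F$ be a flat object of $\Flatl{R}$. By Proposition~\ref{p:CharacterizationFlats}, $F$ is a flat $R$-module with no proper $\tau$-dense submodule, and by \cite[36.7]{Wisbauer} it is torsion-free. First I would deduce from part (1) that $F$ has no proper essential submodule: such a submodule would be a proper $\tau$-dense submodule, contradicting $\tau$-cotorsion-freeness. Next — the step I expect to be the crux — I would invoke the standard fact that a module with no proper essential submodule is semisimple: given any submodule $N$, choose by Zorn's lemma a submodule $N'$ maximal with $N\cap N'=0$; then $N\oplus N'$ is essential in $F$, hence equal to $F$, so $N$ is a direct summand, and therefore $F$ is semisimple. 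Finally I would observe that a nonzero semisimple torsion-free module over a domain cannot exist: it would have a simple direct summand $S$, itself torsion-free, and for any $0\ne x\in S$ the map $R\to S$, $r\mapsto rx$, would be surjective (by simplicity) with zero kernel (torsion-freeness over the domain), forcing $S\cong R$ as left $R$-modules and hence $R$ to be a division ring. Since $R$ is not a division ring, $F=0$, i.e.\ $\Flatl{R}$ has no non-trivial flat objects.

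The substantive content beyond part (1) is just the two elementary facts ``no proper essential submodule implies semisimple'' and ``flat implies torsion-free over a domain''; the main care required is to keep track that it is essentiality of a submodule that feeds into $\tau$-density via (1), and to note that the conclusion of (2) genuinely needs $R$ to be distinct from a division ring, since for a division ring $\Flatl{R}=\Modl{R}$ and every module is flat.
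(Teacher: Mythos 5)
Your proof is correct and follows essentially the same route as the paper: part (1) is identical, and part (2) is the paper's argument run in contrapositive --- both reduce, via the Zorn's-lemma complement construction and part (1), to excluding simple flat modules. The only difference is that final step: you rule out a simple summand using torsion-freeness of flat modules and the cyclic map $R \to S$, whereas the paper argues that every nonzero left ideal of $R$ is $\tau$-dense; both arguments (the paper's included) implicitly require $R$ not to be a division ring, a caveat you rightly flag.
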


\begin{proof}
	(1) Given a non-zero $x \in M-K$, there exists a nonzero $r \in R$ with $0 \neq rx \in K$. This means that $r(x+K)=0$ and that $M/K$ is torsion. By the preceding proposition, $M/K$ is $\tau$-torsion.
	
	(2) First, notice that every non-zero left ideal $I$ of $R$ is $\tau$-dense, since if $R/I$ is a torsion module. In particular, as $R$ is not simple, there are no simple flat modules. Now, let $F$ be a non-zero flat module. As $F$ is not semisimple, it contains a submodule $L$ which is not a direct summand. Now, the set $\{K \leq F \mid K \cap L=0\}$ is not empty and inductive, so that it contains a maximal element $K$ by Zorn's lemma. Since $L$ is not a direct summand, $K+L$ is a proper essential submodule of $F$. By (1), $K+L$ is a $\tau$-dense submodule of $F$; then, by Proposition \ref{p:CharacterizationFlats}, $F$ is not flat in $\Flatl{R}$. 
\end{proof}

For perfect rings we have:

\begin{corollary}
	Suppose that $\tau$ is hereditary and that $R$ is two-sided perfect. Then there are enough projectives in $\Flatl R$.
\end{corollary}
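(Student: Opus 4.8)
The plan is to reduce, by Corollary~\ref{c:LocallyArtinian}(1), to proving that $\tau$ is jansian, and then to establish jansianity from the structure of two‑sided perfect rings. Take $\mathcal S$ to be the one‑object preadditive category with $\End_{\mathcal S}(a)=R$, so that $\Modl{\mathcal S}=\RMod$ and $H_a={}_{R}R$. Since $R$ is left perfect, every left $R$‑module has a projective cover (Bass; \cite[Theorem~28.4]{AndersonFuller}); in particular $C_\tau({}_{R}R)$ does, so Corollary~\ref{c:LocallyArtinian}(1) applies and it is enough to show that $\tau$ is jansian. By Proposition~\ref{p:CharacterizationJansian} together with Lemma~\ref{l:L=C}, this is equivalent to $C_\tau({}_{R}R)$ being $\tau$‑dense in ${}_{R}R$, that is, to ${}_{R}R/C_\tau({}_{R}R)\in\mathcal T_\tau$.

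Two features of the hypothesis will do the work. First, left perfectness gives $\Flatl R=\RProj$, whence $\mathcal T_\tau=\{X\mid\Hom(X,R)=0\}$ (a nonzero map from $X$ into a projective is, on any single element, nonzero after composing with a coordinate projection onto a copy of $R$). Writing ${}_{R}R=\bigoplus_j Re_j$ for a complete set of primitive orthogonal idempotents (semiperfectness), and using that $C_\tau$, being the torsion radical of the torsion class of $\tau$‑cotorsion‑free modules, commutes with direct sums, the goal becomes $\Hom\!\big(Re_j/C_\tau(Re_j),R\big)=0$ for each $j$. Second, right perfectness yields the descending chain condition on principal left ideals, so every nonzero left ideal of $R$ contains a simple submodule; hence $\Soc({}_{R}R)$ is essential in ${}_{R}R$.

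Now fix $j$, write $Q=Re_j$, with simple top $S=Q/JQ$ (so $JQ$ is the unique maximal submodule of $Q$ and every proper submodule of $Q$ lies in $JQ$). If $S\notin\mathcal T_\tau$, then $Q$ has no proper $\tau$‑dense submodule --- any such $D$ would give $Q/D\twoheadrightarrow S$, forcing $S\in\mathcal T_\tau$ --- so $Q=C_\tau(Q)$ and there is nothing to prove. Assume $S\in\mathcal T_\tau$ and suppose, for a contradiction, that there is a nonzero $\phi\colon Q/C_\tau(Q)\to R$; let $\bar\phi\colon Q\to R$ be the induced map, which vanishes on $C_\tau(Q)$, and set $I=\Img\bar\phi\neq 0$. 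Since $\Soc({}_{R}R)$ is essential, $I$ contains a simple submodule $S'$; as $S'\leq{}_{R}R$ and ${}_{R}R$ is $\tau$‑torsion‑free (flat modules are $\tau$‑torsion‑free), $S'$ is $\tau$‑torsion‑free, hence $\tau$‑cotorsion‑free (a simple $\tau$‑torsion‑free module has no proper $\tau$‑dense submodule), so $C_\tau(I)\neq 0$. Let $g\colon P'\to C_\tau(I)$ be a projective cover; then $P'$ is projective and, by Proposition~\ref{p:PropertiesCotorsionFree}, $\tau$‑cotorsion‑free. Since $\bar\phi$ restricts to an epimorphism $\bar\phi^{-1}(C_\tau(I))\twoheadrightarrow C_\tau(I)$, projectivity of $P'$ lifts $g$ to $h\colon P'\to Q$ with $\bar\phi h=g\neq 0$. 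But $\Img h$ is a quotient of $P'$, hence $\tau$‑cotorsion‑free, hence $\Img h\subseteq C_\tau(Q)$, so $\bar\phi h$ vanishes --- a contradiction. Therefore ${}_{R}R/C_\tau({}_{R}R)\in\mathcal T_\tau$, $\tau$ is jansian, and the corollary follows.

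The main obstacle is precisely the step ``$C_\tau({}_{R}R)$ is $\tau$‑dense'', and it is there that both sides of the hypothesis are genuinely used: left perfectness to identify flat with projective and to provide projective covers (so that Corollary~\ref{c:LocallyArtinian}(1) and the lifting argument are available), and right perfectness to force $\Soc({}_{R}R)$ to be essential, which is exactly what lets an arbitrary nonzero submodule of ${}_{R}R$ produce the nonzero $\tau$‑cotorsion‑free submodule needed to contradict the maximality of $C_\tau$. One could instead phrase the middle step as ``$\mathcal T_\tau$ is closed under products'', but the $C_\tau$‑formulation is the most direct route.
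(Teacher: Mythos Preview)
Your proof is correct. The paper's argument is a one-liner: it invokes \cite[Proposition~3.9]{Golan}, which states that over a perfect ring every hereditary torsion theory is jansian, and then applies Corollary~\ref{c:LocallyArtinian}. You take a genuinely different route, giving a self-contained proof that $C_\tau({}_{R}R)$ is $\tau$-dense: you use left perfectness to identify flat with projective (so that $\mathcal T_\tau=\{X\mid\Hom(X,R)=0\}$) and to supply the projective cover $P'\to C_\tau(I)$, and right perfectness to make $\Soc({}_{R}R)$ essential, which produces the nonzero $\tau$-cotorsion-free piece $S'\leq I$ that drives the contradiction. In effect you are reproving, for this particular $\tau$, the general result the paper imports from Golan. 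The payoff is independence from an external reference and a clear accounting of where each half of the two-sided hypothesis is used; the cost is length. One small redundancy: the case split on whether the simple top $S$ of $Q=Re_j$ lies in $\mathcal T_\tau$ is unnecessary, since if $S\notin\mathcal T_\tau$ then $C_\tau(Q)=Q$ and your contradiction argument is vacuously satisfied.
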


\begin{proof}
	Follows from Corollary \ref{c:LocallyArtinian}, since by \cite[Proposition 3.9]{Golan}, every hereditary torsion theory in $\Modr{R}$ is jansian.
\end{proof}

For noetherian serial rings we have:

\begin{corollary}
	Let $R$ be a noetherian serial ring. Then the following assertions are equivalent:
	\begin{enumerate}
		\item There are enough projectives in $\Flatl{R}$.
		
		\item $R$ is artinian.
	\end{enumerate}
\end{corollary}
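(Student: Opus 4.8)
The plan is to prove both implications after first reducing, via the structure theory of noetherian serial rings, to the prime non-artinian case. By the classical structure theorem (Eisenbud--Griffith, Warfield), $R$ is a finite direct product of rings, each of which is either an artinian serial ring or a prime, non-artinian, noetherian serial ring, and each factor is semiperfect. Since modules, flat modules, epimorphisms in $\Flatl{-}$ and projectivity in $\Flatl{-}$ are all computed factorwise, $\Flatl{R}$ has enough projective objects iff $\Flatl{R_i}$ does for every factor $R_i$. Thus it is enough to prove: \emph{(a)} $\Flatl{A}$ has enough projectives whenever $A$ is artinian serial, and \emph{(b)} $\Flatl{B}$ does \emph{not} have enough projectives whenever $B$ is prime, non-artinian, noetherian serial. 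Granting these, $R$ is artinian iff all its factors are of the first type iff $\Flatl{R}$ has enough projectives, which is the claimed equivalence.

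For (a) I would verify the hypotheses of Corollary \ref{c:LocallyArtinian}(2). The category $\Modl{A}$ is locally artinian because the ring $A$, as a left module over itself, is an artinian generator. For hereditariness of $\tau$: being artinian, $A$ is left perfect, so the flat $A$-modules are exactly the projective ones; being artinian serial, $A$ is QF-3, so $E(\prescript{}{A}{A})$ is projective, and since $A$ is noetherian the injective envelope functor commutes with direct sums, whence $E(F)$ is projective for every flat (= projective) $F$. A projective module is $\tau$-torsion-free, being cogenerated by itself (a flat module), so $E(F)$ is $\tau$-torsion-free for every flat $F$, and $\tau$ is hereditary by Proposition \ref{p:PropertiesOfTau}(4). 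Corollary \ref{c:LocallyArtinian}(2) then yields enough projectives in $\Flatl{A}$.

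For (b) the key claim is that $0$ is the \emph{only} object of $\Flatl{B}$ which is projective in $\Flatl{B}$; since a morphism $0 \to F$ is an epimorphism only when $F=0$, this forces $\Flatl{B}=\{0\}$ as soon as $\Flatl{B}$ has enough projectives, hence $B=0$, a contradiction. To prove the claim, let $P$ be a nonzero projective $B$-module; by Proposition \ref{p:CharacterizationFlats}(2) it suffices to show that $P$ is not $\tau$-cotorsion-free. Since $B$ is prime, semiperfect and not artinian, it has no faithful simple module (a faithful simple module is annihilated by the Jacobson radical, forcing $J(B)=0$, and a semiperfect ring with zero radical is semisimple artinian); hence the annihilator of any simple $B$-module is a nonzero two-sided ideal of the prime Goldie ring $B$, so it contains a regular element, so every simple $B$-module is torsion. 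As $P$ embeds in a free module, and free modules over the prime Goldie ring $B$ are torsion-free, $P$ is torsion-free; being nonzero and torsion-free it has no simple submodule, hence is not semisimple, hence has a proper essential submodule $N$. It is a standard fact that an essential submodule of a torsion-free (e.g.\ flat) module over a prime Goldie ring has torsion quotient, so $P/N$ is torsion, hence $\tau$-torsion by the lemma following Examples \ref{e:ExamplesTau}. Thus $N$ is a proper $\tau$-dense submodule of $P$, so $P$ is not $\tau$-cotorsion-free, as wanted.

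The only non-formal inputs are the structure theorem for noetherian serial rings and the classical fact that artinian serial rings are QF-3 (equivalently, here, that $\tau$ is hereditary over them); everything else is a direct application of the torsion-theoretic machinery of the earlier sections. I expect the main obstacle to be the core of (b): showing that a nonzero projective module over a prime non-artinian noetherian ring always carries a proper essential (hence torsion-quotient, hence $\tau$-dense) submodule, which is precisely where primeness, non-artinianness and the arithmetic of prime Goldie rings all come together.
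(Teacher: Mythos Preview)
Your argument is correct and follows the same global strategy as the paper: split $R$ via the structure theory of noetherian serial rings, show the artinian factors cause no trouble via Corollary~\ref{c:LocallyArtinian}, and show the non-artinian factors have no nonzero projective object in $\Flatl{-}$. The details, however, differ in both halves.

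For $(2)\Rightarrow(1)$ the paper simply invokes Corollary~\ref{c:LocallyArtinian}(2), whereas you take care to verify its standing hypothesis that $\tau$ is hereditary, using that artinian serial rings are QF-3. This is a point the paper leaves implicit.

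For $(1)\Rightarrow(2)$ the two arguments diverge more substantially. The paper uses Facchini's form of the structure theorem, writing $R\cong S\times T$ with $S$ artinian serial and $T$ hereditary noetherian serial \emph{with no simple left ideals}, and then works with a maximal submodule rather than an essential one: if $P$ is a nonzero projective $T$-module and $K\leq P$ is maximal, then $P/K$ is simple and (by noetherianity) finitely presented, and $\Hom_T(P/K,T)=0$ because $T$ has no simple left ideal; Proposition~\ref{p:PropertiesOfTau}(3) then gives $P/K\in\mathcal T_\tau$ immediately, so $K$ is a proper $\tau$-dense submodule and $P$ is not projective in $\Flatl{T}$. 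This bypasses the Goldie machinery entirely: the structural fact ``no simple left ideals'' already encodes what you extract from primeness and regularity of annihilators. Your route through prime factors, Goldie torsion and proper essential submodules is sound (and nicely parallels the domain case in Proposition~\ref{e:FlatsInDomains}), but it is noticeably heavier than the paper's three-line maximal-submodule argument.
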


\begin{proof}
	(1) $\Rightarrow$ (2). By \cite[Theorem 5.29]{Facchini}, $R$ is isomorphic to $S \times T$, where $S$ is an artinian serial ring and $T$, a hereditary noetherian serial ring with no simple left ideals. The hypothesis implies that there are enough projectives in $\Flatl{T}$. We see that there are no nonzero projectives in $\Flatl{R}$, which implies that $T$ has to be zero.
	
	Let $P$ be a nonzero projective module. Then it has a maximal submodule $K$. Since $R$ is left noetherian, $P/K$ is finitely presented. Now, $\Hom_R(P/K,R)=0$, as $R$ does not contain any simple left ideal, so that $P/K$ is torsion by Proposition \ref{p:PropertiesOfTau}. Then $K$ is a $\tau$-dense submodule of $P$ and it is not projective in $\Flatl{R}$ by Proposition \ref{p:CharacterizationFlats}.
	
	(2) $\Rightarrow$ (1). Follows from Corollary \ref{c:LocallyArtinian}.
\end{proof}

For local rings we have:

\begin{corollary}
	Suppose that $R$ is a local ring. Then::
	\begin{enumerate}
		\item If $\Soc({_R}R)\neq 0$, then every object in $\Flatl{R}$ is flat.
		
		\item If $\Soc({_R}R) = 0$, then there are no non-zero projectives in $\Flatl{R}$.
	\end{enumerate}
\end{corollary}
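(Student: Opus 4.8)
The plan is to deduce both statements from Proposition~\ref{p:CharacterizationFlats} and Proposition~\ref{p:AllObjectsAreFlat}, using two standard facts about the local ring $R$: up to isomorphism its only simple left module is $R/J$, where $J=J(R)$ is the maximal left ideal, and every projective left $R$-module is free (Kaplansky). For part~(1) this is immediate: if $\Soc({_R}R)\neq 0$, pick a minimal left ideal $L\subseteq R$; being simple, $L\cong R/J$, so the unique simple left $R$-module embeds into the free, hence flat, module $R$. Thus condition~(5) of Proposition~\ref{p:AllObjectsAreFlat} holds, and with it condition~(1): every object of $\Flatl{R}$ is flat in $\Flatl{R}$.

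For part~(2) I would first observe that $\Flatl{R}$ has a nonzero projective object if and only if $R$ itself is $\tau$-cotorsion-free. Indeed, if $R$ is $\tau$-cotorsion-free it is flat in $\Flatl{R}$ by Proposition~\ref{p:CharacterizationFlats}(1), hence projective in $\Flatl{R}$ by Proposition~\ref{p:CharacterizationFlats}(2); conversely a nonzero projective $P$ in $\Flatl{R}$ is free (Kaplansky together with Proposition~\ref{p:CharacterizationFlats}(2)) and $\tau$-cotorsion-free (Proposition~\ref{p:CharacterizationFlats}(1)), so its quotient $R$ is $\tau$-cotorsion-free by Proposition~\ref{p:PropertiesCotorsionFree}. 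Next, since every proper left ideal lies inside $J$ and $\mathcal T_\tau$ is closed under quotients, $R$ has a proper $\tau$-dense submodule precisely when $J$ itself is $\tau$-dense, that is, when $R/J$ is $\tau$-torsion, that is (as $R/J$ is simple) when $R/J$ embeds into no flat module. Hence part~(2) is equivalent to the assertion: if $\Soc({_R}R)=0$, then $R/J$ embeds into no flat left $R$-module.

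Proving this last assertion is the heart of the matter, and I expect it to be the main obstacle. Arguing contrapositively, suppose $F$ is flat and $0\neq x\in F$ has $\Ann_R(x)=J$; the goal is to produce a minimal left ideal of $R$. Writing $F=\varinjlim_{i\in I}L_i$ as a filtered colimit of finitely generated free modules (Govorov--Lazard) and choosing $i$ with $x$ the image of some $x_i\in L_i$, one has $Rx_i\cong R/\Ann_R(x_i)$ with $\Ann_R(x_i)\subseteq\Ann_R(x)=J$, and the left ideals $\Ann_R(x_j)$, $j\geq i$, increase with union $J$ because $ax=0$ for every $a\in J$. If one can reach a stage $j$ with $\Ann_R(x_j)=J$, then $R/J\cong Rx_j$ embeds into $L_j\cong R^{n}$, so composing with a coordinate projection yields a nonzero $s\in R$ with $Js=0$, whence $Rs\cong R/J$ is a minimal left ideal and $\Soc({_R}R)\neq 0$, a contradiction. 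Reaching such a stage is immediate when $J$ is finitely generated (one stage kills the finitely many generators), and in general I would attack it by applying the equational criterion for flatness to the whole family of relations $\{ax=0\mid a\in J\}$ in $F$; it is also trivial whenever $J$ contains a non-left-zero-divisor, since then $R/J$ is a torsion module and hence $\tau$-torsion, which in particular recovers the domain case of Proposition~\ref{e:FlatsInDomains}.
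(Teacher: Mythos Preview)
Your approach to part~(1) is essentially identical to the paper's: both of you use that $R/J$ is the unique simple and invoke Proposition~\ref{p:AllObjectsAreFlat} (you via condition~(5), the paper via condition~(3), but these are equivalent there).

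For part~(2) your overall strategy also matches the paper's: reduce to showing that $R/J$ is $\tau$-torsion, and then observe that every nonzero free module (hence every nonzero projective, via Kaplansky) inherits a proper $\tau$-dense submodule. The difference is one of scruple. The paper simply \emph{asserts} ``in this case, $R/J(R)$ is $\tau$-torsion'' and moves on; you correctly recognise this as the heart of the matter and prove it only when $J$ is finitely generated (your Lazard argument is exactly Proposition~\ref{p:PropertiesOfTau}(3) specialised to $M=R/J$: a finitely presented module is $\tau$-torsion iff it admits no nonzero map to a finitely generated projective). Your suggestion to attack the general case via the equational criterion is the natural next step, but note that what it yields is only that every finitely generated left ideal contained in $J$ has nonzero right annihilator --- a condition strictly weaker than $\Soc({}_RR)\neq 0$ --- so the argument as sketched does not close the gap without further work.

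In short: your proof and the paper's coincide, and the one point you flag as an obstacle is precisely the step the paper leaves unjustified. You are being more careful than the source, not less.
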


\begin{proof}
	(1) Notice that $R/J(R)$ embeds in ${_R}R$, so that, $R$ does not contain a proper $\tau$-dense left ideal (any $\tau$-dense proper left ideal $I$ would be contained in $J(R)$ and $R/J(R)$ would be $\tau$-torsion, being a quotient of $R/I$). In particular $R$ is projective in $\Flatl{R}$ and the result follows from Proposition \ref{p:AllObjectsAreFlat}.
	
	(2) In this case, $R/J(R)$ is $\tau$-torsion and $\mathcal U_\tau({_R}R)$ is not trivial. This implies that $\mathcal U_\tau(F)$ is not trivial for every free module $F$ and, since every projective module is free, we conclude that there are no non-zero projectives in $\Flatl{R}$.
\end{proof}

Using a recent result by Herbera and Prihoda we get, for commutative rings, the following:

\begin{corollary}
	Suppose that $R$ is commutative and that $\Flatl{R}$ has enough projective objects. Then every object in $\Flatl{R}$ is flat in $\Flatl{R}$.
\end{corollary}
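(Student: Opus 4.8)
The plan is to reduce the statement to an internal property of $R$ and then exploit the trace ideal of the projective module furnished by Theorem~\ref{t:ExistenceEnoughProjectives}. By Proposition~\ref{p:AllObjectsAreFlat}, the assertion ``every object in $\Flatl{R}$ is flat in $\Flatl{R}$'' is equivalent to ``every simple $R$-module embeds in a flat module''. Since $R$ is commutative every simple module is of the form $R/\m$ for a maximal ideal $\m$, and a nonzero homomorphism out of a simple module is automatically a monomorphism; so it suffices to exhibit, for each maximal ideal $\m$, a nonzero morphism $R/\m\to F$ with $F\in\Flatl R$.

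First I would invoke Theorem~\ref{t:ExistenceEnoughProjectives} (whose hypothesis that $\tau$ is hereditary is assumed to be in force): there is a module $P$ that is projective in $\Modl R$ and $\tau$-cotorsion-free and satisfies $\mathcal T_\tau=\{X\in\Modl R\mid \Hom(P,X)=0\}$. Let $I=\tr(P)$ be its trace ideal. By the dual basis lemma $P=IP$, hence every morphism $P\to X$ has image contained in $IX$; in particular $\Hom(P,X)=0$ whenever $IX=0$, so $R/I\in\mathcal T_\tau$, that is, $R/I$ is $\tau$-torsion.

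The heart of the argument is the recent theorem of Herbera and Příhoda that the trace ideal of a projective module over a commutative ring is a pure ideal, so that $R/I$ is a \emph{flat} $R$-module; together with Kaplansky's decomposition of $P$ into countably generated projective summands this reduces to the countably generated case. Now $R/I$ is simultaneously flat and $\tau$-torsion, so $\Hom(R/I,R/I)=0$, forcing $R/I=0$ and $I=R$. Thus $P$ is a projective generator, i.e.\ $1=\sum_{i=1}^n\varphi_i(p_i)$ for suitable $\varphi_i\in\Hom(P,R)$ and $p_i\in P$. For any maximal ideal $\m$, reducing this identity modulo $\m$ shows some composite $P\xrightarrow{\varphi_i}R\to R/\m$ is nonzero, so $\Hom(P,R/\m)\neq 0$, i.e.\ $R/\m\notin\mathcal T_\tau$. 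By the definition of $\mathcal T_\tau$ this yields a nonzero morphism $R/\m\to F$ with $F$ flat, and Proposition~\ref{p:AllObjectsAreFlat} then gives the conclusion.

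The only genuinely hard input is the Herbera–Příhoda purity result for trace ideals of projective modules over commutative rings (with Kaplansky's theorem used to pass to the countably generated situation); everything else is a short trace-ideal computation plus applications of Proposition~\ref{p:AllObjectsAreFlat} and Theorem~\ref{t:ExistenceEnoughProjectives}. I would also double-check that the standing assumption ``$\tau$ hereditary'', needed to apply Theorem~\ref{t:ExistenceEnoughProjectives}, is available in the setting of this corollary.
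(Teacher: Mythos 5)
Your proof is correct and follows essentially the same route as the paper's: both rest on Theorem~\ref{t:ExistenceEnoughProjectives}, the Herbera--P\v{r}\'{\i}hoda purity of the trace ideal of a projective module over a commutative ring, and the observation that a flat $\tau$-torsion module must be zero, forcing $\tr_P(R)=R$; the paper merely reaches this via the identification $C_\tau(R)=\tr_P(R)$ from Proposition~\ref{p:CharacterizationJansian} and closes with condition (3) of Proposition~\ref{p:AllObjectsAreFlat} ($R$ itself is flat in $\Flatl{R}$) rather than condition (5) (simples embed in flats). Your caveat about the hereditary hypothesis on $\tau$ is well taken: the corollary as stated omits it, and the paper's own proof likewise invokes Theorem~\ref{t:ExistenceEnoughProjectives} without securing it.
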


\begin{proof}
	By Theorem \ref{t:ExistenceEnoughProjectives}, there exists a projective module $P$ such that $\mathcal T_\tau=\{X \in \Modl{R}\mid \Hom_R(P,X)=0\}$. By Proposition \ref{p:CharacterizationJansian}, $C_\tau(R)$ is equal to $\tr_P(R)$ so that it is a pure ideal of $R$ by \cite[Corollary 2.13]{HerberaPrihoda}. Since $C_\tau(R)$ is $\tau$-dense in $R$ by Theorem \ref{t:ExistenceEnoughProjectives}, we conclude that $C_\tau(R)=R$. In particular $R$ is flat in $\Flatl{R}$ and the result follows from Proposition \ref{p:AllObjectsAreFlat}.
\end{proof}

\section*{Acknowledgments}

The author would like to thank Juan Cuadra for several stimulating conversations on the topic of this article.

\bibliographystyle{plain} \bibliography{references}

\end{document}